\newtheorem{theorem}{Theorem}[section]
\newtheorem{lemma}[theorem]{Lemma}
\newtheorem{proposition}[theorem]{Proposition}
\newtheorem{corollary}[theorem]{Corollary}
\newtheorem{remark}[theorem]{Remark}
\newtheorem{definition}[theorem]{Definition}
\numberwithin{equation}{section}
\def\Xint#1{\mathchoice
{\XXint\displaystyle\textstyle{#1}}%
{\XXint\textstyle\scriptstyle{#1}}%
{\XXint\scriptstyle\scriptscriptstyle{#1}}%
{\XXint\scriptscriptstyle\scriptscriptstyle{#1}}%
\!\int}
\def\XXint#1#2#3{{\setbox0=\hbox{$#1{#2#3}{\int}$ }
\vcenter{\hbox{$#2#3$ }}\kern-.6\wd0}}
\def\dint{\Xint-}
\DeclareMathOperator *{\essosc}{ess\ osc}
\DeclareMathOperator *{\osc}{osc}
\DeclareMathOperator *{\esssup}{ess\ sup}
\DeclareMathOperator *{\essinf}{ess\ inf}
\DeclareMathOperator *{\di}{div} 
\DeclareMathOperator *{\meas}{meas}
\DeclareMathOperator *{\dist}{dist}
\DeclareMathOperator *{\data}{data}
\DeclareMathOperator *{\diam}{diam}
\DeclareMathOperator *{\loc}{loc}
\DeclareMathOperator *{\Lip}{Lip}
\DeclareMathOperator *{\Tr}{Tr}
\DeclareMathOperator *{\BMO}{BMO}
\DeclareMathOperator *{\Proj}{Proj}
\DeclareMathOperator *{\BBR}{\mathbb{R}}
\DeclareMathOperator *{\BBC}{\mathbb{C}}
\begin{document}
\title[Boundary value problems for elliptic systems]{The $L^p$ Dirichlet and Regularity problems for second order Elliptic Systems with application to the Lam\'e system.}

\author{Martin Dindo\v{s}}
\address{School of Mathematics, \\
         The University of Edinburgh and Maxwell Institute of Mathematical Sciences, UK}
\email{M.Dindos@ed.ac.uk}
\begin{abstract}
In the paper \cite{DHM} we have introduced new solvability methods for strongly elliptic second order systems in divergence form on a domains above a Lipschitz graph, satisfying $L^p$-boundary data for $p$ near $2$. 
The main novel aspect of our result is that it applies to operators with coefficients of limited regularity and applies to operators satisfying a natural Carleson condition that has been first considered in the scalar case. 

In this paper we extend this result in several directions. We improve the range of solvability of the $L^p$ Dirichlet problem to the interval $2-\varepsilon < p<\frac{2(n-1)}{(n-3)}+\varepsilon$, for systems in dimension $n=2,3$  in the range $2-\varepsilon < p<\infty$.  We do this by considering solvability of the Regularity problem (with boundary data having one derivative in $L^p$)  
 in the range $2-\varepsilon < p<2+\varepsilon$.

Secondly, we look at perturbation type-results where we can deduce solvability of the $L^p$ Dirichlet problem for one operator from known  $L^p$ Dirichlet solvability of  a \lq\lq close" operator (in the sense of Carleson measure). This leads to improvement of the main result of the paper \cite{DHM}; we establish solvability of the $L^p$ Dirichlet problem in the interval $2-\varepsilon < p<\frac{2(n-1)}{(n-2)}+\varepsilon$ under a much weaker (oscillation-type) Carleson condition. 

A particular example of the system where all these results apply is the Lam\'e operator  for isotropic inhomogeneous materials with Poisson ratio $\nu<0.396$. In this specific case further improvements of the solvability range are possible, see \cite{DLP}.
\end{abstract}

\maketitle

\section{Introduction}\label{S:Intro}

This paper is motivated by the known results concerning boundary value problems 
for second order elliptic equations in divergence form, when the coefficients satisfying 
a certain natural, minimal smoothness condition (refer \cite{DPP}, \cite{DPR}, \cite{KP}). It extends the results of the paper \cite{DHM} in several interesting and important directions. Because of this we maintain as closely as possible the notation introduced in  \cite{DHM}.

Let $\Omega\subset{\BBR}^{n}$ be a domain defined by a Lipschitz function $\phi$, that is
\begin{equation}\label{Eqqq-1}
\Omega=\{(x_0,x'):\,x_0>\phi(x')\}.
\end{equation}
Consider a second order elliptic operstor in divergence form given by
\begin{equation}\label{ES}
\mathcal{L}u=\left[ \partial_{i} \left(A_{ij}^{\alpha \beta}(x) \partial_{j} u_{\beta}\right)+B_i^{\alpha\beta}(x)\partial_iu_\beta\right]_{\alpha}
\end{equation}
for $i,j\in\{0,\ldots,n-1\}$ and $\alpha,\beta\in\{1,\ldots,N\}$ with real coefficients.
Here the solution $u:\Omega\to{\BBR}^N$ is a vector-valued function satisfying $\mathcal Lu=0$. 
When $N=1$ the equation is scalar and much more is known in this case (c.f. \cites{DPP,DPR} for real and \cites{DP, DP2, DP3} for the complex coefficients cases). We note that if the coefficients of \eqref{ES} are complex valued then such system can be rewritten as a real valued system with $2N$ equations (by writing separate equation for the real and imaginary parts of $u_\beta$) and hence it suffices to consider \eqref{ES} with real coefficients.

We also note that our assumption that the domain $\Omega$ has the form \eqref{Eqqq-1} (i.e. it is an unbounded set above a Lipschitz graph) is primarily for convenience, the arguments given in this paper can be modified to the case of a bounded Lipschitz domain instead or other cases (such as an infinite strip, etc). 

There are many differences between second order elliptic equations and elliptic systems. In general, there is no maximal principle for elliptic systems and the DeGiorgi - Nash - Moser theory that shows interior $C^\alpha$ regularity for scalar elliptic PDE might no longer hold.

This causes a number of new challenges to be dealt with; for example it forces us to work with a weaker version of the nontangential maximal function (defined using the $L^2$ averages). The lack of maximum principle removes the natural $L^\infty$ end-point for solvability of the $L^p$ Dirichlet problem and prevents us from interpolating between the $L^2$ and $L^\infty$ solvabilities. This means that $L^p$ solvability results for $p\ne 2$ have to be obtained using different methods.

We shall say that the bounded and measurable coefficients $A=[A^{\alpha \beta}_{ij}]$ are strongly elliptic (the condition \eqref{EllipA} is usually called the {\bf Legendre} condition)
if there exist constants $0<\lambda\leq\Lambda<\infty$ such that
\begin{equation}\label{EllipA}
\lambda|\eta|^{2}\leq\sum_{\alpha,\beta=1}^{N}\sum_{i,j=0}^{n-1} 
A_{ij}^{\alpha\beta}(x)\eta_{i}^{\alpha}\eta_{j}^{\beta}
\end{equation}
for all nonzero $\eta\in{\BBR}^{nN}$ and a.e. $x\in\Omega$. We shall denote by $\Lambda=\|A\|_{L^\infty(\Omega)}$. This is the strongest form of ellipticity and we shall assume it for most of our results. We note that most of our results can be stated under a weaker integral-type condition (such as (1.4) of \cite{DHM}) but in order to keep the arguments as simple as possible we choose not to do so.

For the results in the second half of this paper it will suffice to assume a weaker condition called {\bf Legendre-Hadamard} condition
\begin{equation}\label{EllipLH}
\lambda |p|^{2}|q|^2\leq\sum_{\alpha,\beta=1}^{N}\sum_{i,j=0}^{n-1} 
A_{ij}^{\alpha\beta}(x)p^\alpha p^\beta q_i q_j.
\end{equation}

It is easy to see that \eqref{EllipA} implies \eqref{EllipLH}, the converse is usually false except in certain special cases such as the scalar case $N=1$.

Recall that in the paper \cite{DHM} we have also assumed that the coefficients also satisfy a Carleson condition, namely that
\begin{equation}\label{Car_hatAA-x}
d{\mu}(x)=\left[\left(\sup_{B_{\delta(x)/2}(x)}|\nabla{A}|\right)^{2}+\left(\sup_{B_{\delta(x)/2}(x)}|B|\right)^{2}\right]
\delta(x)\,dx
\end{equation}
is a Carleson measure. It is of considerable interest to replace the condition \eqref{Car_hatAA-x} bya weaker Carleson condition, namely that
\begin{equation}\label{Car_hatAA-osc}
d{\mu}(x)=\left[\left(\osc_{B_{\delta(x)/2}(x)}{A}\right)^{2}\delta^{-1}(x)+\left(\sup_{B_{\delta(x)/2}(x)}|B|\right)^{2}\delta(x)\right]\,dx,
\end{equation}
is a Carleson measure. Here $\mbox{osc}_B A=\max_{i,j,\alpha,\beta}\left[\sup_B A_{ij}^{\alpha\beta}-\inf_B A_{ij}^{\alpha\beta}\right]$.
In the scalar case ($N=1$) it has been shown that \eqref{Car_hatAA-osc} is always sufficient for solvability of both Dirichlet and Regularity boundary value problems, provided the Carleson norm of \eqref{Car_hatAA-osc} is small.  This follows from solvability results for the Carleson condition \eqref{Car_hatAA-x} and Dahlberg-Kenig perturbation result for real and scalar elliptic PDEs. In the case of systems a similar perturbation result is not known.

We will establish an $L^2$ version of perturbation result for systems and use it to prove solvability of the $L^p$ Dirichlet boundary problem under the condition \eqref{Car_hatAA-osc} when $2-\varepsilon< p<\frac{2(n-1)}{n-2}+\varepsilon$.

Since we are not able to prove analogous perturbation result for the the $L^p$ Regularity problem, we only establish solvability of the Regularity problem for $2-\varepsilon< p<2+\varepsilon$ assuming the stronger Carleson condition on  \eqref{Car_hatAA-x}.

Based on this result we then improve solvability result for the $L^p$ Dirichlet boundary from \cite{DHM} to the interval $2-\varepsilon< p<\frac{2(n-1)}{(n-3)}+\varepsilon$ under the stronger Carleson condition for \eqref{Car_hatAA-x}.\vglue1mm

The literature on the solvability of boundary value problems for elliptic systems in domains of $\mathbb R^n$ is limited except 
when the tensor $A$ has constant coefficients, or at least smooth enough so that methods like boundary layer potentials may 
be employed. For the solvability the $L^p$-Dirichlet problem for constant coefficients second-order elliptic systems in the 
range $2-\varepsilon<p<2+\varepsilon$ see \cites{DKV, F, FKV, G, BM} and \cite{K}. It was subsequently shown in \cites{S1,S2} 
that in the constant coefficient case this range may be extended to the interval $2-\varepsilon<p<\frac{2(n-1)}{n-3}+\varepsilon$ 
by exploring the solvability of the Regularity problem. See also \cite{MMMM} and in particular \cite{S4} for more 
recent developments. We take advantage of \cite{S4} to extrapolate from solvability for $p=2$ to the range 
$2\le p < \frac{2(n-1)}{n-2}+\varepsilon$ without needing to establish the solvability of the Regularity problem.

Of notable interest is also paper  \cite{DM} where the Stationary Navier-Stokes system in nonsmooth manifolds
was studied. The authors have established results for $L^p$ solvability of the linearized Stokes operator with 
variable coefficients via the method of layer potentials. Because of the method used, at least H\"older continuity 
of the underlying metric tensor had to be assumed.

Another special case is when $A$ is of block-form. For operators $\mathcal{L}=\mbox{div}(A(x)\nabla\cdot)$ 
associated with block matrices $A$, there are numerous results on the $L^p$-solvability of the Dirichlet, Regularity, 
and Neumann problems. This body of results owes to the solution of the Kato problem, where the coefficients of the 
block matrix are also assumed to be independent of the transverse variable. This assumption is usually referred 
in literature as ``$t$-independent" (in our notation it is the $x_0$ variable). See \cite{AHLMT}, \cite{HM}, 
as well as a series of papers by Auscher, Rosen(Axelsson), and McIntosh for second-order elliptic systems (\cites{AA1, AR2, AAM}). 

There are also solvability results in various special cases, assuming that the solutions satisfy 
De Giorgi - Nash - Moser estimates; see \cite{AAAHK} and \cite{HKMPreg} for example (the latter paper is 
also concerned with operators that are $t$-independent). Finally, there are perturbation results in a multitude 
of special cases, such as \cite{AAH} and \cite{AAM}; the first  paper has $L^2$-solvability results for small $L^\infty$ perturbations of real elliptic operators when the complex matrix is $t$-independent and  the second paper shows that solvability in $L^2$ implies solvability 
in $L^p$ for $p$ near $2$.

Significantly, in the formulation of our solvability result for elliptic systems we shall not assume ``$t$-independence".
Instead, we assume the coefficients $A$ and $B$ satisfy a natural Carleson condition that has appeared 
in the literature so far only in the scalar case for real \cites{KP01,DPP,DPR} and complex valued \cites{DP,DP2,DP3} coefficients. 
The Carleson condition on $A$, formulated in \eqref{Car_hatAA} below, holds uniformly on Lipschitz sub-domains, 
and is therefore a natural condition in the context of chord-arc domains as well. However, in this work we do not  go beyond the class of Lipschitz domain.

We are ready to formulate our results. Our first theorem is a straightforward improvement of \cite[Theorem 1.1]{DHM}.
\begin{theorem}\label{S3:T0} 
Let $\Omega$ be the Lipschitz domain $\{(x_0,x')\in{\mathbb{R}}\times{\mathbb{R}}^{n-1}:\,x_0>\phi(x')\}$ 
with Lipschitz constant $L=\|\nabla\phi\|_{L^\infty}$. Assume that the coefficient tensor $A$ of the operator \eqref{ES} is strongly elliptic with constants $\lambda,\Lambda$ (c.f. \eqref{EllipA}). 
In addition assume that the following holds:
\begin{itemize}
\item[$(i)$] $A_{0j}^{\alpha\beta}=\delta_{\alpha\beta}\delta_{0j}$.
\item[$(ii)$]
\begin{equation}\label{Car_hatAA}
d{\mu}(x)=\left[\left(\osc_{B_{\delta(x)/2}(x)}{A}\right)^{2}\delta^{-1}(x)+\left(\sup_{B_{\delta(x)/2}(x)} |{B}|\right)^{2} \delta(x)\right]\,dx
\end{equation}
is a Carleson measure in $\Omega$. 
\end{itemize}

There exists a small $K=K(\lambda,\Lambda,n,N)>0$ such that if
\begin{equation}\label{Small-Cond}
\max\big\{L\,,\,\|\mu\|_{\mathcal C}\big\}\leq K
\end{equation}
then $L^p$-Dirichlet problem for the system \eqref{E:D} is solvable for all $2-\varepsilon< p<\frac{2(n-1)}{n-2}+\varepsilon$
and the estimate
\begin{equation}\label{Main-Est}
\|\tilde{N}_a u\|_{L^{p}(\partial \Omega)}\leq C\|f\|_{L^{p}(\partial \Omega;{\BBR}^N)}
\end{equation}
holds for all energy solutions $\mathcal Lu=0$ with datum $f$. Here $\varepsilon=\varepsilon(\lambda, \Lambda, n,N,K)>0$ and $C=C(\lambda, \Lambda, n,N,\Omega,K)>0$.
\end{theorem}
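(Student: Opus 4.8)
The plan is to deduce Theorem~\ref{S3:T0} from the main result of \cite{DHM}, which assumes the \emph{stronger} Carleson condition \eqref{Car_hatAA-x}, in three stages: first replace the tensor $A$ by a mollified tensor $\tilde A$ satisfying \eqref{Car_hatAA-x} with controlled norm and differing from $A$ only by a ``small'' Carleson measure; then run an $L^2$ perturbation argument transferring $L^2$-Dirichlet solvability from the mollified operator back to $\mathcal L$; and finally invoke Shen's extrapolation \cite{S4} to reach the full exponent range.

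For the mollification, fix a radial bump $\eta$ supported in the unit ball with $\int\eta=1$ and a small dimensional constant $c_0\in(0,\tfrac12)$, and set
\[
\tilde A_{ij}^{\alpha\beta}(x)=\frac{1}{(c_0\delta(x))^n}\int_{{\BBR}^n}\eta\!\left(\frac{x-y}{c_0\delta(x)}\right)A_{ij}^{\alpha\beta}(y)\,dy ,
\]
keeping $B$ unchanged. Because the entries prescribed in $(i)$ are constant they survive the averaging, so $\tilde A$ again satisfies $(i)$; averaging preserves the Legendre condition \eqref{EllipA} with the same $\lambda,\Lambda$; and a direct computation gives the pointwise bounds $|\nabla\tilde A(x)|\lesssim \delta(x)^{-1}\osc_{B_{\delta(x)/2}(x)}A$ and $|A(x)-\tilde A(x)|\lesssim \osc_{B_{\delta(x)/2}(x)}A$. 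Hence $\tilde A,B$ satisfy \eqref{Car_hatAA-x} with Carleson norm $\lesssim\|\mu\|_{\mathcal C}$, while $|A-\tilde A|^2\delta^{-1}\,dx$ is a Carleson measure of norm $\lesssim\|\mu\|_{\mathcal C}$. Applying \cite[Theorem 1.1]{DHM} to the operator $\mathcal L_0$ obtained from \eqref{ES} by replacing $A$ with $\tilde A$ then gives, for $K$ small, solvability of its $L^2$-Dirichlet problem with the estimate \eqref{Main-Est} at $p=2$, together with the square-function/nontangential estimates underlying that theorem.

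Next, given an energy solution $\mathcal L u=0$ with datum $f$, regard $u$ as an energy solution of the inhomogeneous problem $\mathcal L_0 u=\partial_i\big((\tilde A-A)_{ij}^{\alpha\beta}\partial_j u_\beta\big)-B_i^{\alpha\beta}\partial_i u_\beta$, and split $u=u_0+v$ where $\mathcal L_0u_0=0$ carries the datum $f$ and $v$ vanishes on $\partial\Omega$. The term $u_0$ is controlled by the previous step. For $v$ one combines the square-function bounds for $\mathcal L_0$ with the Carleson smallness of $\tilde A-A$ and $B$: the driving terms pair against the Carleson measures $|A-\tilde A|^2\delta^{-1}\,dx$ and $|B|^2\delta\,dx$, and after using interior Caccioppoli estimates to pass from $\delta\nabla u$ to $u$ one arrives at a bound of the form
\[
\|\tilde N_a v\|_{L^2(\partial\Omega)}\;\lesssim\;\|\mu\|_{\mathcal C}^{1/2}\Big(\|\tilde N_a u\|_{L^2(\partial\Omega)}+\|f\|_{L^2(\partial\Omega)}\Big).
\]
Combining this with $\|\tilde N_a u_0\|_{L^2}\lesssim\|f\|_{L^2}$ and choosing $K$, hence $\|\mu\|_{\mathcal C}$, small enough absorbs the $\|\tilde N_a u\|_{L^2}$ contribution into the left-hand side and yields \eqref{Main-Est} for $\mathcal L$ at $p=2$. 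This $L^2$ perturbation step is the substantive part of the argument.

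Finally, the $L^2$-solvability estimate for $\mathcal L$ just obtained is of exactly the type to which the extrapolation of \cite{S4} applies; invoking it upgrades solvability to the range $2\le p<\frac{2(n-1)}{n-2}+\varepsilon$ with no recourse to the Regularity problem, while the sub-interval $2-\varepsilon<p<2$ follows from the standard self-improvement of $L^2$-Dirichlet solvability to an open range of exponents (as already exploited in \cite{DHM}). Tracking the dependence of $\varepsilon$ and $C$ through these steps gives the stated form. The main obstacle is the $L^2$ perturbation estimate: since elliptic \emph{systems} possess neither a maximum principle nor De~Giorgi--Nash--Moser interior regularity, the classical scalar perturbation arguments (Dahlberg, Fefferman--Kenig--Pipher) are unavailable, and the whole estimate must be carried out with the weak, $L^2$-averaged nontangential maximal function $\tilde N_a$ and the associated square function, keeping careful track of how the Carleson norms of $A-\tilde A$ and of $B$ enter so that the error terms can genuinely be absorbed once $K$ is small.
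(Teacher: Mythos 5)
Your first stage (mollification of $A$ at scale comparable to $\delta(x)$, preservation of the structural condition $(i)$ and of ellipticity, and the two Carleson bounds for $|\nabla\tilde A|^2\delta\,dx$ and $|A-\tilde A|^2\delta^{-1}\,dx$) and your last stage (Shen's extrapolation \cite{S4} upward from $p=2$, real-variable self-improvement downward) coincide with what the paper does in Section 6. The problem is the middle stage.

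You implement the perturbation by splitting $u=u_0+v$, where $v$ solves the \emph{inhomogeneous} system $\mathcal L_0v=\partial_i\big((\tilde A-A)_{ij}^{\alpha\beta}\partial_ju_\beta\big)-B_i^{\alpha\beta}\partial_iu_\beta$ with zero boundary data, and you assert $\|\tilde N_a v\|_{L^2}\lesssim\|\mu\|_{\mathcal C}^{1/2}(\|\tilde N_a u\|_{L^2}+\|f\|_{L^2})$. This is a genuine gap, and it is exactly the point the paper flags in the introduction when it says that for systems ``a similar perturbation result is not known.'' Pairing the source terms against the Carleson measures and using Caccioppoli gives, at best, a weighted energy/square-function bound of the type $\int|\nabla v|^2\delta\,dx\lesssim\|\mu\|_{\mathcal C}\|\tilde N_a u\|_{L^2}^2$; to convert this into a bound on $\tilde N_a v$ you need the direction $\tilde N\lesssim S$, and every tool for that direction in this setting (the stopping-time/good-$\lambda$ machinery of Section 5, which substitutes for the missing maximum principle and De Giorgi--Nash--Moser theory) is proved only for solutions of a \emph{homogeneous} system; $v$ is not one, and extending Lemma \ref{S3:L8-alt1} to the inhomogeneous problem is not addressed in your sketch. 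The paper avoids forming $v$ altogether: it keeps the original solution $u$ and regards it as a solution of the homogeneous system for the perturbed operator, writing $\mathcal Lu=\partial_i(\varepsilon_{ij}^{\alpha\beta}\partial_ju_\beta)+b_i^{\alpha\beta}\partial_iu_\beta$ as in \eqref{eq-sys-pert}. The perturbation terms $\varepsilon=A-\bar A$ and $b=B-\bar B$ are then carried explicitly through the Rellich-type integration by parts (the terms $II_{21},\dots,II_{25}$ in \eqref{eq-2-2}, leading to Lemma \ref{S3:L4-alt1} and Corollary \ref{S4:C1-alt1}) and through the good-$\lambda$ argument (Lemma \ref{S3:L8-alt1} is stated for the operator \eqref{ES-new2ax}, which already contains the $\partial_0(\varepsilon_{0j}\partial_ju)$ correction), yielding the two-sided comparison $\|\tilde N_a u\|_{L^2}\approx\|S_a(u)\|_{L^2}$ directly for $u$ and then absorption of the small-Carleson terms. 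If you want to salvage your splitting, you would have to redo the whole nontangential-maximal-versus-square-function theory for the inhomogeneous problem satisfied by $v$, which is neither easier than nor materially different from what the paper does for $u$; as written, the key inequality for $\tilde N_av$ is unsupported.

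A minor additional point: after pulling back to $\mathbb R^n_+$ the condition $(i)$ is generally disrupted, and the paper must reapply the change of unknowns from Section 2.3 of \cite{DHM} to the mollified coefficients (using the smallness of $L$) before the estimates of Sections 4--5 apply; your outline performs the mollification in $\Omega$ and does not account for this normalization.
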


We have an improved solvability range under a stronger Carleson condition.

\begin{theorem}\label{S3:T1} 
Let $\Omega$ be the Lipschitz domain $\{(x_0,x')\in{\mathbb{R}}\times{\mathbb{R}}^{n-1}:\,x_0>\phi(x')\}$ 
with Lipschitz constant $L=\|\nabla\phi\|_{L^\infty}$. Assume that the coefficient tensor $A$ of the operator \eqref{ES} is strongly elliptic with constants $\lambda,\Lambda$ (c.f. \eqref{EllipA}). 
In addition assume that the following holds:
\begin{itemize}
\item[$(i)$] $A_{0j}^{\alpha\beta}=\delta_{\alpha\beta}\delta_{0j}$.
\item[$(ii)$]
\begin{equation}\label{Car_hatAAxx}
d{\mu}(x)=\left[\left(\sup_{B_{\delta(x)/2}(x)}|\nabla{A}|\right)^{2}
+\left(\sup_{B_{\delta(x)/2}(x)} |{B}|\right)^{2} \right]\delta(x)\,dx
\end{equation}
is a Carleson measure in $\Omega$. 
\end{itemize}

There exists a small $K=K(\lambda,\Lambda,n,N)>0$ such that if
\begin{equation}\label{Small-Condx}
\max\big\{L\,,\,\|\mu\|_{\mathcal C}\big\}\leq K
\end{equation}
then $L^p$-Dirichlet problem for the system \eqref{E:D} is solvable for all $2-\varepsilon< p<\frac{2(n-1)}{n-3}+\varepsilon$
and the estimate
\begin{equation}\label{Main-Estx}
\|\tilde{N}_a u\|_{L^{p}(\partial \Omega)}\leq C\|f\|_{L^{p}(\partial \Omega;{\BBR}^N)}
\end{equation}
holds for all energy solutions $\mathcal Lu=0$ with datum $f$. Here $\varepsilon=\varepsilon(\lambda, \Lambda, n,N,K)>0$ and $C=C(\lambda, \Lambda, n,N,\Omega,K)>0$.
\end{theorem}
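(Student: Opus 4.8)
The plan is to bootstrap from the two pillars already available: first, the $L^2$-theory together with the extrapolation to $p$ slightly above $2$ for the Dirichlet problem (Theorem \ref{S3:T0}, which holds under the weaker oscillation-type Carleson condition \eqref{Car_hatAA} and hence \emph{a fortiori} under the stronger condition \eqref{Car_hatAAxx}); and second, the solvability of the $L^p$ \emph{Regularity} problem for $2-\varepsilon<p<2+\varepsilon$, which (as announced in the introduction) is available precisely under the stronger Carleson hypothesis \eqref{Car_hatAAxx}. Concretely, I would first record that under \eqref{Car_hatAAxx} both the $L^{p}$ Dirichlet problem and the $L^{p}$ Regularity problem are solvable in a common interval $2-\varepsilon_0<p<2+\varepsilon_0$, with the nontangential estimates \eqref{Main-Est} and the analogous Regularity estimate $\|\tilde N(\nabla u)\|_{L^p}\lesssim\|f\|_{W^{1,p}}$ and $\|\tilde N(\nabla u)\|_{L^p}\lesssim\|\nabla_T f\|_{L^p}$ holding for energy solutions.

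Next I would invoke the duality/interpolation machinery of Shen (\cite{S1,S2,S4}) adapted to the variable-coefficient setting: solvability of the Regularity problem in $L^{p'}$ is essentially dual to solvability of the Dirichlet problem in $L^p$, and one can run a real-variable ``good-$\lambda$'' or reverse-H\"older self-improvement argument to propagate Dirichlet solvability from the base interval up to $p<\frac{2(n-1)}{n-3}+\varepsilon$. The key dimensional gain $\tfrac{2(n-1)}{n-2}\rightsquigarrow\tfrac{2(n-1)}{n-3}$ comes from feeding the Regularity estimate (one derivative in $L^p$, i.e. a stronger input) into the extrapolation rather than the plain $L^2$ Dirichlet bound: a Sobolev-type trace/embedding on the boundary sphere of radius $r$ converts an $L^2$-averaged Regularity bound on a surface ball into an $L^q$ bound for $q$ up to the indicated Sobolev exponent, and Shen's abstract extrapolation lemma (the $(L^2)\Rightarrow(L^q)$ weak reverse-H\"older criterion, stated for general $q$) then upgrades this to genuine $L^q$ Dirichlet solvability with the nontangential maximal function estimate. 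For $n=2,3$ the exponent $\frac{2(n-1)}{n-3}$ is infinite (or the constraint is vacuous), so in those dimensions the range becomes $2-\varepsilon<p<\infty$, as claimed in the abstract.

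The technical steps to be checked are: (a) that the weak nontangential maximal function $\tilde N_a$ (the $L^2$-averaged version forced on us by the absence of De Giorgi–Nash–Moser) interacts correctly with Shen's real-variable argument — one must verify the required self-improving reverse-H\"older inequality on surface balls for $\tilde N_a u$, using interior Caccioppoli and the energy estimate in place of pointwise regularity; (b) that the Carleson condition \eqref{Car_hatAAxx} is preserved under the localization/rescaling inherent in the extrapolation (this is where the scale-invariant normalization of the Carleson norm and the smallness \eqref{Small-Condx} are used, exactly as in \cite{DHM}); and (c) that the passage from energy solutions to general $L^p$ data, together with uniqueness, closes the argument. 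The main obstacle, and the reason the Regularity problem is only obtained for $p$ near $2$, is step (a) combined with the lack of a perturbation theorem for the Regularity problem: everything rests on the base-case Regularity solvability under \eqref{Car_hatAAxx}, and the $L^2$-averaged nontangential function must be shown to satisfy the Shen-type hypotheses despite the failure of pointwise interior regularity — this is the delicate point where the arguments of \cite{DHM} must be pushed, and where the stronger Carleson condition \eqref{Car_hatAAxx} (rather than the oscillation condition \eqref{Car_hatAA}) is genuinely needed.
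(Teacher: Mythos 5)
Your proposal follows essentially the same route as the paper: the range $p\le 2$ is delegated to Theorem \ref{S3:T0}, and for $p>2$ one verifies the hypotheses of Shen's abstract extrapolation result (\cite{S3}, reproduced as Theorem \ref{th-sh}) for the sublinear operator $f\mapsto \tilde N_{2,a}(u)$, deriving the required weak reverse H\"older inequality from the local $L^q$ Regularity solvability for $q$ near $2$ (Lemma \ref{S7:L2}) combined with a fundamental-theorem-of-calculus representation of $u$ and the fractional-integral (Sobolev) estimate that yields $\tfrac1p=\tfrac1q-\tfrac1{n-1}$, hence $p>\tfrac{2(n-1)}{n-3}$. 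The only inaccurate point is your appeal to Regularity--Dirichlet duality: the paper never dualizes (and duality alone would only return exponents near $2$); the gain comes entirely from the direct Sobolev-embedding mechanism you describe afterwards, implemented on the sawtooth domains $\mathcal{O}_{\tau\Delta,a}$ together with Caccioppoli, averaging in $\tau$, and Gehring-type higher integrability of $\nabla u$ to control the lateral boundary terms.
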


Finally, we have an improved regularity of solutions (so-called Regularity problem):

\begin{theorem}\label{S3:T3} 
Let $\Omega$ be the Lipschitz domain $\{(x_0,x')\in{\mathbb{R}}\times{\mathbb{R}}^{n-1}:\,x_0>\phi(x')\}$ 
with Lipschitz constant $L=\|\nabla\phi\|_{L^\infty}$. Consider the operator
\begin{equation}\label{ES-4}
\widetilde{\mathcal{L}}u=\left[ \partial_{i} \left(\widetilde{A_{ij}^{\alpha \beta}}(x) \partial_{j} u_{\beta}\right)\right]_{\alpha}
\end{equation}
and assume that the operator \eqref{ES-4} can be rewritten as a system with first-order terms
\begin{equation}\label{ES-2}
\mathcal{L}u=\left[ \partial_{i} \left(A_{ij}^{\alpha \beta}(x) \partial_{j} u_{\beta}\right)+B_i^{\alpha\beta}(x)\partial_iu_\beta\right]_{\alpha}
\end{equation}
whose coefficients satisfy
\begin{itemize}
\item[$(i)$] $A_{0j}^{\alpha\beta}=\delta_{\alpha\beta}\delta_{0j}$.
\item[$(ii)$]
\begin{equation}\label{Car_hatAAxxx}
d{\mu}(x)=\left[\left(\sup_{B_{\delta(x)/2}(x)}|\nabla{A}|\right)^{2}
+\left(\sup_{B_{\delta(x)/2}(x)} |{B}|\right)^{2} \right]\delta(x)\,dx
\end{equation}
is a Carleson measure in $\Omega$. 
\item[$(iii)$]The coefficient tensor $A$ is strongly elliptic. 
\end{itemize}
There exists a small $K=K(\lambda,\Lambda,n,N)>0$ such that if
\begin{equation}\label{Small-Condxx}
\max\big\{L\,,\,\|\mu\|_{\mathcal C}\big\}\leq K
\end{equation}
then $L^p$-Regularity problem for the system ${\mathcal L}u=\widetilde{{\mathcal L}}u=0$  is solvable for all $2-\varepsilon< p<2+\varepsilon$
and the estimate
\begin{equation}\label{Main-Estxx}
\|\tilde{N}_a (\nabla u)\|_{L^{p}(\partial \Omega)}\leq C\|\nabla_T f\|_{L^{p}(\partial \Omega;{\BBR}^N)}
\end{equation}
holds for all energy solutions $\mathcal Lu=0$ with datum $f$. Here $\varepsilon=\varepsilon(\lambda, \Lambda, n,N,K)>0$ and $C=C(\lambda, \Lambda, n,N,\Omega,K)>0$.
\end{theorem}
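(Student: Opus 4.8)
The plan is to reduce the $L^p$ Regularity problem for $\mathcal{L}$ in the narrow range $2-\varepsilon<p<2+\varepsilon$ to two ingredients: an $L^2$ Regularity estimate, obtained via a Rellich-type identity and the Carleson-condition machinery already developed for the Dirichlet problem in \cite{DHM}, and a self-improvement (extrapolation) in $p$ that promotes the $L^2$ estimate to nearby $L^p$. First I would set up the problem precisely: for datum $f$ with $\nabla_T f\in L^2(\partial\Omega;\mathbb{R}^N)$ we seek the energy solution $u$ of $\mathcal{L}u=\widetilde{\mathcal{L}}u=0$ with $u|_{\partial\Omega}=f$, and the goal is the bound $\|\tilde N_a(\nabla u)\|_{L^2(\partial\Omega)}\le C\|\nabla_T f\|_{L^2(\partial\Omega)}$. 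The normalization $(i)$, $A_{0j}^{\alpha\beta}=\delta_{\alpha\beta}\delta_{0j}$, is crucial here: it means $\mathcal{L}u=\partial_0^2 u + \partial_i(A_{ij}\partial_j u)_{i\ge1\text{ or }j\ge1} + B\cdot\nabla u$, so that the conormal derivative in the $x_0$-direction is simply $\partial_0 u$ plus lower-order/tangential terms, which is exactly what makes a Rellich identity tractable.

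The heart of the matter is the $L^2$ estimate, and here is where I expect the main obstacle. In the scalar case one uses the maximum principle and the Dahlberg–Kenig–Pipher technology; for systems neither is available, so one must run the Rellich–Nečas identity directly. Concretely, integrating $\di(\,(A\nabla u\cdot\nabla u)\, e_0 - 2(e_0\cdot\nabla u)\, A\nabla u\,)$ over $\Omega$ (or a Lipschitz subdomain, then taking limits) produces, on the boundary, a comparison between $\|\partial_\nu u\|_{L^2}$ and $\|\nabla_T u\|_{L^2}$, while the interior term is controlled by $\di(A)$ and $B$ — precisely the quantities whose square, times $\delta(x)$, is assumed to be a Carleson measure. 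One then estimates that interior term by $\|\mu\|_{\mathcal C}^{1/2}$ times $\|\tilde N_a(\nabla u)\|_{L^2}\,\|\nabla u\|_{L^2(\Omega,\delta^{-1}dx)}^{1/2}$-type quantities, using the standard Carleson-measure/$\tilde N$ duality (Lemma-type estimates as in \cite{DHM}); the smallness of $\|\mu\|_{\mathcal C}\le K$ lets this be absorbed. One also needs the square-function bound $\|S(\nabla u)\|_{L^2}\lesssim \|\tilde N_a(\nabla u)\|_{L^2}$ and its converse, plus a non-tangential/square-function comparison, to close the loop and convert the Rellich boundary comparison into the full estimate $\|\tilde N_a(\nabla u)\|_{L^2}\lesssim\|\nabla_T f\|_{L^2}$. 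The smallness of the Lipschitz constant $L$ enters when flattening the boundary and controlling the geometric error terms in the Rellich identity (the $e_0$ direction is not the true conormal, and the discrepancy is $O(L)$).

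Once the $L^2$ Regularity estimate is in hand, the passage to $2-\varepsilon<p<2+\varepsilon$ is a real-variable extrapolation argument à la Shen (as in \cites{S1,S2,S4}, invoked already in the introduction): the $L^2$ solvability of the Regularity problem, together with interior estimates (Caccioppoli, reverse-Hölder for $|\nabla u|$ coming from $L^2$-based interior regularity for systems with Carleson-type coefficients, which one has locally since the oscillation is small on Whitney balls), gives a weak reverse-Hölder inequality for $\tilde N_a(\nabla u)$ on boundary balls, and Shen's lemma upgrades this to $L^p$ for $p$ in an open interval around $2$. Finally I would package the existence and uniqueness of the energy solution with the estimate: uniqueness follows from the energy inequality (Legendre ellipticity gives coercivity), and the a priori estimate \eqref{Main-Estxx} then extends to all $f$ with $\nabla_T f\in L^p$ by density. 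I would remark that the stronger Carleson condition \eqref{Car_hatAAxxx} (genuine $|\nabla A|$, not oscillation) is used rather than \eqref{Car_hatAA} because the Rellich identity needs $\di(A)$ pointwise and there is no systems analogue of the Dahlberg–Kenig perturbation result that would let one replace $|\nabla A|$ by the oscillation, exactly as flagged in the introduction.
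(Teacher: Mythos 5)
Your plan hinges on a Rellich--Ne\v{c}as identity as the engine of the $L^2$ estimate, but this is not how the paper proceeds, and as written the plan does not close. The paper never uses an unweighted Rellich identity. Instead it differentiates the equation: setting $w^k=\partial_k u$, each $w^k$ solves a system of the form \eqref{eq-sys-deriv}, and the weighted integration by parts of Section 4 (the quantity $\mathcal I=\iint A_{ij}^{\alpha\beta}\partial_j v_\beta\partial_i v_\alpha\,x_0\zeta$) applied to $v=w^k$ yields Lemma \ref{S3:L4-alt2}: $\|S(\nabla u)\|_{L^2}^2\lesssim\|\nabla_T f\|_{L^2}^2+\|\mu\|_{\mathcal C}\|\tilde N_a(\nabla u)\|_{L^2}^2$, where the normal second derivatives are recovered from the PDE via $\partial_0^2u_\alpha=-\sum_{(i,j)\ne(0,0)}A_{ij}^{\alpha\beta}\partial_i\partial_ju_\beta-\dots$. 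Combined with the converse bound $\|\tilde N_a(\nabla u)\|_{L^p}\lesssim\|S(\nabla u)\|_{L^p}$ (Proposition \ref{S3:C7-alt2}) and absorption of the small Carleson term, this gives the $L^2$ result with no Rellich identity at all. Conversely, a Rellich identity by itself only compares boundary traces of $\partial_\nu u$ and $\nabla_T u$ (traces whose existence in $L^2$ is itself not a priori available for rough-coefficient systems); to pass from that to $\tilde N_a(\nabla u)$ you would still need exactly the two comparisons above, at which point the Rellich step is redundant. There is also an unresolved circularity in your treatment of the Rellich interior error: $\int_\Omega|\nabla A|\,|\nabla u|^2\,dx$ carries no factor of $\delta(x)$, so after Cauchy--Schwarz against the Carleson measure you are left with $\big(\int_\Omega|\nabla u|^2\delta^{-1}\,dx\big)^{1/2}$, which for solutions is comparable to the very quantity $\|S(\nabla u)\|_{L^2}$ (second derivatives weighted by $\delta$) that you are trying to estimate.

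The second, related gap is that you treat the comparison $\|\tilde N_a(\nabla u)\|_{L^2}\lesssim\|S(\nabla u)\|_{L^2}$ as ``standard''. For systems it is not: the absence of De Giorgi--Nash--Moser estimates is precisely why the paper must build the stopping-time Lipschitz graphs $h_{\nu,a}$, the localized estimate of Lemma \ref{S3:L8-alt2} (including the careful handling of the conormal-type quantity $H_\alpha=A_{0j}^{\alpha\beta}w^j_\beta$ via $\partial_0H_\alpha=-\sum_{i>0}\partial_i(A_{ij}^{\alpha\beta}w^j_\beta)$), and the good-$\lambda$ inequality of Lemma \ref{LGL-loc-alt2}, with the smallness of $\|\mu\|_{\mathcal C}$ needed to absorb an extra term in that inequality. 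This is the actual content of the theorem and your proposal does not supply it. Your self-improvement to $p>2$ via a weak reverse H\"older inequality is in the right spirit (the paper instead runs a Dahlberg--Kenig localization with sawtooth domains $\Omega_\tau$ and a Whitney decomposition to reach $L^{2+\delta}$, and handles $p<2$ by the extrapolation of \cite{DHM}), but it rests on the $L^2$ theory that the first part of your argument does not establish.
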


\begin{remark} In regards the structural condition $(i)$ in all three previous theorems section 2 of paper \cite{DHM} outlines
how any PDE of the form \eqref{ES} or \eqref{ES-4} can be rewritten so that the condition $(i)$ holds. See there also for discussion on how such coefficients change affects strong ellipticity and the Carleson condition.
\end{remark}

\noindent{\bf Example.} Consider the Lam\'e operator ${\mathcal L}$ for isotropic inhomogeneous materials
in a domain $\Omega$ with Lam\'e coefficients $\lambda(x)$ and $\mu(x)$. Then for $u:\Omega\to{\mathbb R}^n$ in vector notation (c.f. \cite{UW}) $\mathcal L$ has the form
\begin{equation}\label{Lame}
\mathcal Lu=\nabla\cdot\left(\lambda(x)(\nabla\cdot u)I+\mu(x)(\nabla u+(\nabla u)^T) \right).
\end{equation}
We can write the operator ${\mathcal L}$ in the form \eqref{ES} where
\begin{equation}\label{eqLM}
{A}_{ij}^{\alpha\beta}(x)=\mu(x)\delta_{ij}\delta_{\alpha\beta}+\lambda(x)\delta_{i\alpha}\delta_{j\beta}+\mu(x)\delta_{i\beta}\delta_{j\alpha}
\end{equation}
and ${B}^{\alpha\beta}_i(x)=0$.

We show that we can apply our results to the Lam\'e operator for isotropic inhomogeneous materials, provided we also have strong ellipticity for the modified operator that also satisfies $(i)$ and $(ii)$. The paper \cite{DHM} discusses this detail in section 7. As shown there (Lemma 7.1) the Lam\'e system can be rewritten so that $(i)$ holds and the operator is strongly elliptic if 
$$\lambda<(\sqrt{8}+1)\mu\approx 3.828\mu,$$
or alternatively the Poisson ratio $\nu:=\frac{\lambda}{2(\lambda+\mu)}<0.396$. There are many materials where this holds (for example  aluminium, bronze, steel and many other metals, carbon,  polystyrene, PVC, silicate glasses, concrete, etc) \cite{MR}. Examples of few materials where this assumption fails are gold, lead or rubber. For these three materials $\nu$ is near the incompressibility limit ($\nu=\frac12-$) at which  \eqref{Lame} gives {\rm div}$\,u=0$, i.e., the material is incompressible. Intuitively, as both gold and lead are very soft metals, under pressure they behave as liquids, that is a pressure in one direction will cause them to change shape and stretch in remaining directions in order to preserve volume. Rubber is nearly incompressible with $\nu\approx 0.49$.
\vglue1mm

Hence we obtain the following corollary:
\vglue1mm

\begin{corollary} \label{C:lame}
Let $\Omega$ be the Lipschitz domain $\{(x_0,x')\in{\mathbb{R}}\times{\mathbb{R}}^{n-1}:\,x_0>\phi(x')\}$ 
with Lipschitz constant $L=\|\nabla\phi\|_{L^\infty}$. Assume that the Lame coefficients $\lambda,\mu\in L^\infty(\Omega)$ satisfy the following:
\begin{itemize} 
\item[$(i)$] There exists $\mu_0>0$ such that
\begin{equation}\label{Cond-lame}
\mbox{\rm ess }\inf_{x\in\Omega}\{(\sqrt{8}-1)\mu(x)+\lambda(x),(\sqrt{8}+1)\mu(x)-\lambda(x)\}\ge \mu_0.
\end{equation}\item[$(ii)$]
\begin{equation}\label{Car_lame2}
d{\nu}(x)=\left[\left(\osc_{B_{\delta(x)/2}(x)}{\lambda}\right)^{2}+\left(\osc_{B_{\delta(x)/2}(x)}{\mu}\right)^{2}\right]\delta^{-1}(x)
\end{equation}
is a Carleson measure in $\Omega$. 
\end{itemize}
There exists a small $K=K(\lambda,\Lambda,n,N)>0$ such that if
\begin{equation}\label{Small-Cond2}
\max\big\{L\,,\,\|\nu\|_{\mathcal C}\big\}\leq K
\end{equation}
then $L^p$-Dirichlet problem for the Lam\'e system 
\begin{equation}\label{ES-lame2}
\begin{cases}
\mathcal Lu=\nabla\cdot\left(\lambda(x)(\nabla\cdot u)I+\mu(x)(\nabla u+(\nabla u)^T) \right)=0 
& \text{in } \Omega,
\\[4pt]
u(x)=f(x) & \text{ for $\sigma$-a.e. }\,x\in\partial\Omega, 
\\[4pt]
\tilde{N}_a(u) \in L^{p}(\partial \Omega), &
\end{cases}
\end{equation}
is solvable for all $2-\varepsilon< p<\frac{2(n-1)}{n-2}+\varepsilon$ and the estimate
\begin{equation}\label{Main-Est-LM}
\|\tilde{N}_a u\|_{L^{p}(\partial \Omega)}\leq C\|f\|_{L^{p}(\partial \Omega;{\BBR}^n)}
\end{equation}
holds for all energy solutions $u:\Omega\to {\mathbb R}^n$ with datum $f$. Here\newline $\varepsilon=\varepsilon(\mu_0,\|\lambda\|_{L^\infty},\|\mu\|_{L^\infty},n)>0$ and $C=C(\mu_0,\|\lambda\|_{L^\infty},\|\mu\|_{L^\infty},n)>0$. 

The solvability range 
improves to $2-\varepsilon< p<\frac{2(n-1)}{n-3}+\varepsilon$ if
\begin{equation}\label{Car_lame}
d{\nu}(x)=\sup_{B_{\delta(x)/2}(x)}\left(|\nabla{\lambda}|+|\nabla{\mu}|\right)^{2}\delta(x)\,dx
\end{equation}
is a Carleson measure with small Carleson norm.

Finally, in the range $2-\varepsilon< p<2+\varepsilon$ we also have the estimate
\begin{equation}\label{Main-Est-LMR}
\|\tilde{N}_a (\nabla u)\|_{L^{p}(\partial \Omega)}\leq C\|\nabla_T f\|_{L^{p}(\partial \Omega;{\BBR}^n)}
\end{equation}
if \eqref{Car_lame} is a Carleson measure with small Carleson norm.
\end{corollary}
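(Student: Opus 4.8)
The plan is to deduce the corollary from Theorems~\ref{S3:T0}, \ref{S3:T1} and \ref{S3:T3}, applied not to the Lam\'e operator \eqref{Lame} directly but to the operator $\mathcal L$ of the form \eqref{ES-2} obtained from it by the coefficient reduction of section~2 of \cite{DHM} (carried out for the Lam\'e system in its section~7). As the Remark above recalls, that reduction always produces the structural normalization $(i)$, converts strong ellipticity and the Carleson condition on the input coefficients into the same properties for $\mathcal L$ with comparable constants, and leaves the class of energy solutions --- and the nontangential quantities in \eqref{Main-Est-LM}--\eqref{Main-Est-LMR} --- unchanged up to harmless constants. So I would spend the proof verifying that, under \eqref{Cond-lame} together with \eqref{Car_lame2} (resp.\ \eqref{Car_lame}), hypotheses $(i)$--$(iii)$ of the relevant theorem hold for $\mathcal L$ with smallness constants controlled by $\mu_0,\|\lambda\|_{L^\infty},\|\mu\|_{L^\infty},n$.

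\emph{Ellipticity.} First I would record that, by \eqref{eqLM}, \eqref{Lame} is the operator \eqref{ES} with $B\equiv 0$ and $A_{ij}^{\alpha\beta}=\mu\,\delta_{ij}\delta_{\alpha\beta}+\lambda\,\delta_{i\alpha}\delta_{j\beta}+\mu\,\delta_{i\beta}\delta_{j\alpha}$, and that for $\eta\in{\mathbb R}^{n\times n}$ with symmetric and antisymmetric parts $\eta_{\mathrm s},\eta_{\mathrm a}$ one has
\[
\sum A_{ij}^{\alpha\beta}\eta_i^\alpha\eta_j^\beta=\mu|\eta|^2+\mu\operatorname{tr}(\eta^2)+\lambda(\operatorname{tr}\eta)^2=2\mu|\eta_{\mathrm s}|^2+\lambda(\operatorname{tr}\eta)^2 ,
\]
so the raw Lam\'e tensor is only Legendre--Hadamard, with $\eta_{\mathrm a}$ lying in the kernel of the form. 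The reduction to $(i)$ redistributes the $0$-row and $0$-column of $A$ among the remaining blocks and thereby restores genuine strong ellipticity; this is precisely Lemma~7.1 of \cite{DHM}, which shows the normalized tensor satisfies \eqref{EllipA} with $\lambda_{\mathrm{ell}}$ comparable to $\essinf_{x\in\Omega}\min\{(\sqrt8-1)\mu+\lambda,(\sqrt8+1)\mu-\lambda\}$ and $\Lambda_{\mathrm{ell}}\lesssim\|\lambda\|_{L^\infty}+\|\mu\|_{L^\infty}$. Thus \eqref{Cond-lame} is exactly hypothesis $(iii)$ for $\mathcal L$ (equivalently Poisson ratio $<0.396$) and $\lambda_{\mathrm{ell}}\gtrsim\mu_0$.

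\emph{Carleson transfer.} Under \eqref{Cond-lame} the $(0,0)$ block $\widehat A=(A_{00}^{\alpha\beta})$ is invertible throughout $\Omega$ with $\|\widehat A^{-1}\|_{L^\infty}$ bounded in terms of $\mu_0,\|\lambda\|_{L^\infty},\|\mu\|_{L^\infty}$, so the coefficients of $\mathcal L$ are algebraic functions of $\lambda,\mu$, the first-order part being a finite sum of products of entries of $\widehat A^{-1}$ with first derivatives of $\lambda,\mu$. I would then read off, on each ball $B_{\delta(x)/2}(x)$, the pointwise bounds $\osc A\lesssim\osc\lambda+\osc\mu$, $|\nabla A|\lesssim|\nabla\lambda|+|\nabla\mu|$ and $|B|\lesssim|\nabla\lambda|+|\nabla\mu|$, with constants depending only on $\mu_0,\|\lambda\|_{L^\infty},\|\mu\|_{L^\infty}$. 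Hence smallness of the Carleson norm of \eqref{Car_lame} forces smallness of that of \eqref{Car_hatAAxx} and of \eqref{Car_hatAAxxx}, which is hypothesis $(ii)$ in Theorems~\ref{S3:T1} and \ref{S3:T3}. For Theorem~\ref{S3:T0}, where only the oscillation measure \eqref{Car_lame2} is given and $\lambda,\mu$ need not be weakly differentiable, the reduction to $(i)$ must instead be combined with a mollification of $\lambda,\mu$ at scale $\delta(x)$ and the $L^2$-perturbation result underlying that theorem --- exactly the passage from the oscillation condition \eqref{Car_hatAA-osc} to \eqref{Car_hatAA} discussed in \cite{DHM} --- to conclude that $\mathcal L$ satisfies \eqref{Car_hatAA} with small norm, i.e.\ hypothesis $(ii)$ there.

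\emph{Conclusion and the main difficulty.} With $(i)$ automatic and $(ii)$, $(iii)$ in hand, Theorem~\ref{S3:T0} gives solvability of \eqref{ES-lame2} and \eqref{Main-Est-LM} for $2-\varepsilon<p<\frac{2(n-1)}{n-2}+\varepsilon$; if \eqref{Car_lame} is Carleson with small norm, Theorem~\ref{S3:T1} widens the range to $2-\varepsilon<p<\frac{2(n-1)}{n-3}+\varepsilon$, and Theorem~\ref{S3:T3} (with $\widetilde{\mathcal L}$ the Lam\'e operator and $\mathcal L$ its normalization) yields \eqref{Main-Est-LMR} for $2-\varepsilon<p<2+\varepsilon$. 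Since $N=n$ and the ellipticity and Carleson constants of $\mathcal L$ depend only on $\mu_0,\|\lambda\|_{L^\infty},\|\mu\|_{L^\infty},n$, so do $\varepsilon$ and $C$, and the estimates pass back to the Lam\'e system because the reduction preserves solutions and the nontangential maximal functions up to constants. I expect the only real obstacle to be the Carleson transfer in the merely-$L^\infty$ regime --- ensuring that imposing condition $(i)$ does not destroy the smallness of the Carleson norm of \eqref{Car_lame2} --- which is exactly why that step must invoke the reduction machinery of \cite{DHM} rather than a self-contained estimate.
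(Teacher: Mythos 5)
Your proposal is correct and follows essentially the same route as the paper: reduce the Lam\'e system to the normalized form of \cite{DHM} (strong ellipticity via Lemma 7.1 there under \eqref{Cond-lame}), transfer the Carleson conditions, and in the merely-$L^\infty$ case mollify $\lambda,\mu$ at scale $\delta(x)$ \emph{before} performing the reduction so that the derivative terms in the induced first-order coefficients make sense, then invoke the perturbation machinery. The one point where the paper is more explicit is the hybrid definition \eqref{yr4DDF}--\eqref{yr4DDFA} of the transformed coefficients --- derivatives are arranged to fall only on the mollified coefficients, and the conclusion is drawn by applying Lemma \ref{S3:L4-alt1}, Corollary \ref{S4:C1-alt1} and Lemma \ref{S3:L8-alt1} to the resulting pair of operators rather than by citing Theorem \ref{S3:T0} as a literal black box --- but you flag exactly this as the delicate step and defer to the same machinery.
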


\noindent{\it Remark.} In particular, the part of Corollary \ref{C:lame}  where the condition \eqref{Car_lame2} is assumed does apply to objects composed of multiple materials joined together by reasonably smooth interfaces between them (say Lipschitz) that might extend all the way to the boundary. The Carleson measure $\nu$ then measures jumps in coefficients $\lambda$ and $\mu$  across the different materials.\vglue1mm

\noindent{\it Remark 2.} Observe that under condition that \eqref{Car_lame2} is a small Carleson measure we obtain solvability in the range $p\in(2-\varepsilon,\frac{2(n-1)}{n-2}+\varepsilon)$ using the extrapolation. Recall that the idea of extrapolation is to deduce from solvability at a single point, say $L^2$, solvability in certain range of values of $p\in (2-\varepsilon,q(n))$. In the results above we have used a recent result of Shen \cite{S4} which shows $q(n)>\frac{2(n-1)}{n-2}$ for elliptic systems. Note that when $N=1$ then $q(n)=\infty$ by the maximum principle but when $N>1$ the question of the optimal value of $q(n)$ is open. In our upcoming work with J. Li and J. Pipher \cite{DLP} we improve Shen's result for the Lam\'e systems satisfying the ellipticity condition \eqref{C:lame} and show that
$$q(2)=\infty,\qquad q(3)>11.50,\quad q(4)>8.055,\qquad q(n)>\frac{2}{1-\sqrt{8\sqrt{2}-11}}\approx 4.546.$$

Observe that this is improves the solvability range claimed above even under the stronger Carleson condition \eqref{Car_lame} in dimensions $n>3$ where the range $p\in(2-\varepsilon, \frac{2(n-1)}{n-3}+\varepsilon)$ is stated.

Our improvement is based on the concept of $p$-ellipticity which in \cite{DLP} we introduce for elliptic systems. This concept  in an easier setting of scalar complex valued PDEs and its connection to extrapolation appears in our most recent work \cite{DP4}.

\vglue2mm

The paper is organised as follow. In Section~\ref{S2}, we introduce important notions and definitions needed later. Section 3 precisely defines the notion of solvability of Dirichlet and Regularity boundary value problems. 
Section 4 establishes bounds for the square function by the boundary data and the nontangential maximal function.
Reverse estimates are in section 5. Then, in section 6 we prove Theorem \ref{S3:T0},  Theorem \ref{S3:T3} in section 7 and Theorem \ref{S3:T1}  in section 8. Finally in section 9 we establish Corollary \ref{C:lame} for the Lam\'e system, which requires somewhat delicate application of Theorems \ref{S3:T0}-\ref{S3:T3}.

\section{Definitions and background results}
\label{S2}

For a vector-valued function $u=(u_{\alpha})_{1\leq\alpha\leq N}:\Omega\to{\BBR}^{N}$ we let 
$\nabla u$ denote the Jacobian matrix of $u$. The latter is defined as the matrix with entries
\begin{equation}\label{Eqqq-2} 
\left(\nabla u\right)_{i}^{\alpha}=\partial_{i} u_{\alpha} 
=\frac{\partial u_{\alpha}}{\partial x_{i}}
\end{equation}
for $i\in\{0,\ldots,n-1\}$ and $\alpha\in\{1,\ldots,N\}$.

Given an open set $\Omega\subseteq{\mathbb{R}}^n$, for $0\leq k\leq\infty$ we use $C^{k}(\Omega;{\BBR}^{N})$ 
to denote the space of all ${\BBR}^{N}$-valued functions in $\Omega$ with continuous partial derivatives up to 
order $k$. Also, we shall let $C^{k}_{0}(\Omega;{\BBR}^{N})$ be the subspace of $C^{k}(\Omega;{\BBR}^{N})$ 
consisting functions that are compactly supported in $\Omega$. For $k\in{\mathbb{N}}$ and $1\leq p<\infty$, 
let $W^{k,p}(\Omega;{\BBR}^{N})$ be the Sobolev space which is the collection of ${\BBR}^{N}$-valued locally 
integrable functions in $\Omega$ having distributional derivatives of order $\leq k$ in $L^p(\Omega;{\BBR}^{N})$. 
When $k=1$, equip this space with the norm
\begin{equation}\label{EFFV} 
\|u\|_{W^{1,p}(\Omega)}:=\left[\int_{\Omega}\left(|u(x)|^{p}+|(\nabla u)(x)|^{p}\right)\,dx\right]^{1/p}.
\end{equation}
Also, let $W^{k,p}_{\rm loc}(\Omega;{\BBR}^{N})$ stands for the local version of $W^{k,p}(\Omega;{\BBR}^{N})$. 
Similarly, we denote by $\dot{W}^{k,p}(\Omega;{\BBR}^{N})$ the homogeneous version of the $L^p$-based Sobolev 
space of order one in $\Omega$. When $k=1$, this is endowed with the semi-norm
\begin{equation}\label{EFFV2} 
\|u\|_{\dot{W}^{1,p}(\Omega)}:=\left[\int_\Omega|(\nabla u)(x)|^{p}\,dx\right]^{1/p}.
\end{equation}

Throughout this paper, by a weak solution of \eqref{ES} in $\Omega$ we shall understand a function 
$u\in W^{1,2}_{\rm loc}(\Omega;{\BBR}^{N})$ satisfying $\mathcal{L}u=0$ in the sense of distributions 
in $\Omega$. 

\subsection{Non-tangential maximal and square functions}
\label{SS:NTS}

Consider a domain of the form 
\begin{equation}\label{Omega-111}
\Omega=\{(x_0,x')\in\BBR\times{\BBR}^{n-1}:\, x_0>\phi(x')\},
\end{equation}
where $\phi:\BBR^{n-1}\to\BBR$ is a Lipschitz function with Lipschitz constant given by 
$L:=\|\nabla\phi\|_{L^\infty(\BBR^{n-1})}$. For each point $x\in{\mathbb{R}}^n$ abbreviate 
$\delta(x):=\mbox{dist}(x,\partial\Omega)$. In particular, %
\begin{equation}\label{PTFCC}
\delta(x)\approx x_0-\phi(x')\,\,\text{ uniformly for }\,\,x=(x_0,x')\in\Omega.
\end{equation}

A cone {\rm (}or non-tangential approach region{\rm )} of aperture $a\in(0,\infty)$ 
with vertex at the point $Q=(x_0,x')\in{\mathbb{R}}\times{\mathbb{R}}^{n-1}$ is defined as
\begin{equation}\label{TFC-6}
\Gamma_{a}(Q)=\big\{y=(y_0,y')\in{\mathbb{R}}\times{\mathbb{R}}^{n-1}:\,a(y_0-x_0)>|x'-y'|\big\}.
\end{equation}
Imposing the demand that $a\in(0,1/L)$ then ensures that $\Gamma_{a}(Q)\subseteq\Omega$ whenever $Q\in\partial\Omega$.
In particular, when $\Omega=\BBR^n_+$ all parameters $a\in(0,\infty)$ may be considered.
Sometimes it is necessary to truncate $\Gamma_{a}(Q)$ at height $h$, in which scenario we write
\begin{equation}\label{TRe3}
\Gamma_{a}^{h}(Q):=\Gamma_{a}(Q)\cap\{x\in\Omega:\,\delta(x)\leq h\}.
\end{equation}

\begin{definition}\label{D:S}
For $\Omega \subset \mathbb{R}^{n}$ as above and $a\in(0,1/L)$, the square function of some 
$u\in W^{1,2}_{\rm loc}(\Omega; {\BBR}^{N})$ is defined at each $Q\in\partial\Omega$ by
\begin{equation}\label{yrdd}
S_{a}(u)(Q):=\left(\int_{\Gamma_{a}(Q)}|(\nabla u)(x)|^{2}\delta(x)^{2-n}\,dx\right)^{1/2}
\end{equation}
and, for each $h>0$, its truncated version is given by 
\begin{equation}\label{yrdd.2}
S_{a}^{h}(u)(Q):=\left(\int_{\Gamma_{a}^{h}(Q)}|(\nabla u)(x)|^{2}\delta(x)^{2-n}\,dx\right)^{1/2}.
\end{equation}
\end{definition}

A simple application of Fubini's theorem gives 
\begin{equation}\label{SSS-1}
\|S_{a}(u)\|^{2}_{L^{2}(\partial\Omega)}\approx\int_{\Omega}|(\nabla u)(x)|^{2}\delta(x)\,dx.
\end{equation}

\begin{definition}\label{D:NT.a} 
For $\Omega\subset\mathbb{R}^{n}$ as above and $a\in(0,1/L)$, the nontangential maximal function of some 
$u\in C^{\,0}(\Omega;{\BBR}^{N})$ and its truncated version at height $h$ are defined at each $Q\in\partial\Omega$ by
\begin{equation}\label{SSS-2}
N_{a}(u)(Q):=\sup_{x\in\Gamma_{a}(Q)}|u(x)|\,\,\text{ and }\,\,
N^h_{a}(u)(Q):=\sup_{x\in\Gamma^h_{a}(Q)}|u(x)|.
\end{equation}
\end{definition}

Moreover, we shall also consider a related version  of the above nontangential maximal function.
This is denoted by $\tilde{N}_a$ and is defined using $L^2$ averages over balls in the domain $\Omega$. 
Specifically, we make the following definition.

\begin{definition}\label{D:NT.b} 
For $\Omega\subset\mathbb{R}^{n}$ as above and $a\in(0,1/L)$,
given $u\in L^2_{\rm loc}(\Omega;{\BBR}^{N})$ we set
\begin{equation}\label{SSS-3}
\tilde{N}_{a}(u)(Q):=\sup_{x\in\Gamma_{a}(Q)}w(x)\,\,\text{ and }\,\,
\tilde{N}_{a}^{h}(u)(Q):=\sup_{x\in\Gamma_{a}^{h}(Q)}w(x)
\end{equation}
for each $Q\in\partial\Omega$ and $h>0$ where, at each $x\in\Omega$, 
\begin{equation}\label{w}
w(x):=\left(\dint_{B_{\delta(x)/2}(x)}|u|^{2}(z)\,dz\right)^{1/2}.
\end{equation}
\end{definition}

Here and elsewhere, a barred integral indicates integral average.
We note that, given $u\in L^2_{\rm loc}(\Omega;{\BBR}^{N})$, the function $w$ 
associated with $u$ as in \eqref{w} is continuous and $\tilde{N}_a(u)=N_a(w)$ 
everywhere on $\partial\Omega$. For systems with bounded measurable coefficients, the best regularity we can expect 
from a weak solution of \eqref{ES} is $u\in W^{1,2}_{\rm loc}(\Omega;\BBR^N)$. 
In particular, $u$ might not be pointwise well-defined. In the scalar case $N=1$ 
by the De Giorgi-Nash-Moser estimates the situation is different as the solutions are locally H\"older continuous. 
Hence, while in the scalar case considering $N_a$ typically suffices, in the case of systems the consideration of 
$\tilde{N}_a$ becomes necessary. In particular, this is the case when we assume the Carleson condition on \eqref{Car_hatAA}.

\subsection{The Carleson measure condition}
\label{SS:Car} 

We begin by recalling the definition of a Carleson measure in a domain $\Omega$ as in \eqref{Omega-111}. 
For $P\in{\BBR}^n$, define the ball centered at $P$ with the radius $r>0$ as
\begin{equation}\label{Ball-1}
B_{r}(P):=\{x\in{\BBR}^n:\,|x-P|<r\}.
\end{equation}
Next, given an arbitrary location $Q \in \partial\Omega$ along with a scale $r>0$, we shall abbreviate 
$\Delta=\Delta_{r}=\Delta_{r}(Q)=\partial\Omega\cap B_{r}(Q)$ and refer to this as the 
surface ball centered at $Q$ and of radius $r$. The Carleson region $T(\Delta_r(Q))$ associated with such a surface ball 
is then defined by
\begin{equation}\label{tent-1}
T(\Delta_{r}(Q)):=\Omega\cap B_{r}(Q).
\end{equation}

\begin{definition}\label{Carleson}
Let $\Omega$ be as in \eqref{Omega-111}.
A Borel measure $\mu$ in $\Omega$ is said to be Carleson if it has finite Carleson norm, i.e., 
\begin{equation}\label{CMC-1}
\|\mu\|_{\mathcal C}=\sup_{\Delta}\frac{\mu\left(T(\Delta)\right)}{\sigma(\Delta)}<\infty,
\end{equation}
where the supremum is taken over all surface balls $\Delta\subseteq\partial\Omega$, and 
where $\sigma$ is the surface measure on $\partial\Omega$. 
\end{definition}

The following result plays a significant role in the future (c.f. \cite{DHM}).

\begin{proposition}\label{T:Car}
Let $\Omega$ be as in \eqref{Omega-111} and fix some $a\in(0,1/L)$. Given a function $f\in L^\infty_{\rm loc}(\Omega)$, define 
$d\nu=f\,dx$ and $d\mu(x)=\left[\esssup_{B_{\delta(x)/2}(x) }|f|\right]dx$. Assume that $\mu$ is a Carleson measure in $\Omega$. 
Then there exists a finite constant $C=C(L,a)>0$ such that for every $u\in L^{2}_{\rm loc}(\Omega;{\BBC})$ one has
\begin{equation}\label{Ca-222}
\int_{\Omega}|u(x)|^2\,d\nu(x)\leq C\|\mu\|_{\mathcal{C}} 
\int_{\partial\Omega}\left(\tilde{N}_{a}(u)\right)^2\,d\sigma.
\end{equation}
\end{proposition}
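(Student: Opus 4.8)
The plan is to reduce the weighted integral on the left-hand side of \eqref{Ca-222} to a standard Carleson-measure–nontangential-maximal-function estimate applied to the averaged function $w$ from \eqref{w}, rather than to $u$ itself. First I would observe that for a.e.\ $x\in\Omega$ one has $|u(x)|^2 \leq C\, w_1(x)^2$ is \emph{not} true pointwise (since $u$ need not be defined pointwise), so instead I would work directly with the integral. Fix a ball $B=B_{\delta(x)/2}(x)$. For the density $f$, the key point is that $|f|$ on $B$ is dominated by $\esssup_{B_{\delta(x)/2}(x)}|f|$, which is exactly the density of $\mu$. However the difficulty is that the weight in $d\nu = f\,dx$ at the point $x$ sees $u(x)$, whereas the weight in $d\mu$ at $x$ involves a supremum over a ball; to line these up I would instead re-expand using Fubini: each $z\in\Omega$ belongs to $B_{\delta(x)/2}(x)$ only for $x$ in a region comparable to $B_{c\delta(z)}(z)$, so
\begin{equation}\label{eq:prop-fubini}
\int_\Omega |u(z)|^2\, d\nu(z) \leq C\int_\Omega \left(\dint_{B_{\delta(x)/2}(x)}|u|^2\right) d\mu(x) = C\int_\Omega w(x)^2\, d\mu(x),
\end{equation}
where the inequality uses $d\nu \leq d\mu$ together with the observation that replacing the value $|u(z)|^2$ by its average over a slightly larger region only increases the integral after swapping the order of integration (a doubling/Vitali covering argument on the collection of balls $B_{\delta(x)/2}(x)$, using $\delta(x)\approx\delta(z)$ when these balls intersect).

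Next I would invoke the classical Carleson inequality in the form: if $\mu$ is a Carleson measure on $\Omega$ and $w$ is continuous, then
\begin{equation}\label{eq:prop-carleson}
\int_\Omega w(x)^2\, d\mu(x) \leq C(L,a)\,\|\mu\|_{\mathcal C}\int_{\partial\Omega}\big(N_a(w)\big)^2\, d\sigma.
\end{equation}
This is the standard tent-space duality / good-$\lambda$ estimate: one writes $\mu(\{x : w(x)>\lambda\}) \leq \mu\big(\bigcup T(\Delta)\big)$ over a Whitney-type decomposition of $\{N_a(w)>\lambda\}\subseteq\partial\Omega$ into surface balls, bounds each piece by $\|\mu\|_{\mathcal C}\,\sigma(\Delta)$, sums, and integrates in $\lambda$. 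Here I use that $w$ is continuous (stated just after Definition~\ref{D:NT.b}) so that $N_a(w)$ is well-defined and lower semicontinuous. Finally, combining \eqref{eq:prop-fubini} and \eqref{eq:prop-carleson} with the identity $\tilde N_a(u) = N_a(w)$ on $\partial\Omega$ (again from the remark after Definition~\ref{D:NT.b}) yields \eqref{Ca-222}.

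The main obstacle is the first step: carefully justifying the passage from the weight $d\nu=f\,dx$, whose integrand at $x$ pairs against the possibly-ill-defined $|u(x)|^2$, to the average $w(x)^2$ paired against $d\mu$. The clean way is to never evaluate $u$ pointwise at all: since $u\in L^2_{\rm loc}$, the quantity $\int_\Omega |u|^2\,d\nu$ is well-defined as $\int_\Omega |u(z)|^2 f(z)\,dz$, and one should bound $|f(z)| \leq \esssup_{B_{\delta(x)/2}(x)}|f|$ for \emph{every} $x$ with $z\in B_{\delta(x)/2}(x)$, then average that inequality in $x$ over such a ball $B_{c\delta(z)}(z)$ and apply Fubini. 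Controlling the geometric constants in this covering — i.e.\ that $\{x : z\in B_{\delta(x)/2}(x)\}$ has measure $\approx \delta(z)^n$ and is contained in a ball of radius $\approx\delta(z)$ about $z$, using $\delta(x)\approx\delta(z)$ throughout and the Lipschitz character of $\Omega$ — is the routine-but-delicate part, and is where the dependence $C=C(L,a)$ enters. Once that is in place, the rest is the textbook Carleson embedding theorem.
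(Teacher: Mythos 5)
Your proposal is correct, and in fact the paper itself does not prove Proposition~\ref{T:Car} but imports it from \cite{DHM}; your two-step argument (a Fubini/covering step replacing $|u|^2\,|f|$ by $w^2$ paired against the sup-density of $\mu$, followed by the classical Carleson embedding applied to the continuous function $w$ together with the identity $\tilde N_a(u)=N_a(w)$) is exactly the standard route one would take. The only point worth making airtight is the inequality $|f(z)|\le\esssup_{B_{\delta(x)/2}(x)}|f|$, which as stated is quantified over uncountably many $x$ and hence over uncountably many null sets; the clean fix, which your last paragraph essentially gestures at, is to use that $|f(z)|\le\esssup_{B_{c\delta(z)}(z)}|f|$ for a.e.\ $z$ and that $B_{c\delta(z)}(z)\subset B_{\delta(x)/2}(x)$ for every $x\in B_{c\delta(z)}(z)$ once $c$ is small enough, after which the Fubini computation goes through with constants depending only on $n$, $L$ and $a$.
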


Moreover, the aforementioned assumption on coefficients of the system \eqref{ES} 
is compatible with the useful change of variables described in the next two subsections. 

\subsection{Inequalities}
\label{SS:Ineq}

Here we recall some of basic the inequalities that hold for weak solutions of the operator 
${\mathcal{L}}$. 

\begin{proposition}\label{poincare}{\rm (}Poincar\'e inequality{\rm )}
There exists a finite dimensional constant $C=C(n)>0$ such that, for all balls $B_{R}\subset{\BBR}^{n}$ 
and all $u\in W^{1,2}(B_{R};{\BBR}^{N})$, 
\begin{gather*}
\int_{B_{R}}|u-u_{B_{R}}|^{2}\,dx\leq CR^{2}\int_{B_{R}}|\nabla u|^{2}\,dx, 
\end{gather*}
where
\begin{equation}\label{uuu-AVE}
u_{B_{R}}:=\dint_{B_{R}}u(x)\,dx.
\end{equation}
\end{proposition}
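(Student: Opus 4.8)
The plan is to reduce to the scalar case and to a single radius, and then to run a standard compactness argument. First, since $|u-u_{B_R}|^{2}=\sum_{\alpha=1}^{N}|u_{\alpha}-(u_{\alpha})_{B_R}|^{2}$ and $|\nabla u|^{2}=\sum_{\alpha=1}^{N}|\nabla u_{\alpha}|^{2}$, it suffices to establish the inequality for scalar-valued $u\in W^{1,2}(B_R)$. Next, a dilation reduces everything to $R=1$: writing $v(y):=u(P+Ry)$ for $y\in B_1$, where $P$ is the center of $B_R$, one has $v\in W^{1,2}(B_1)$, $v_{B_1}=u_{B_R}$, $\int_{B_1}|v-v_{B_1}|^{2}\,dy=R^{-n}\int_{B_R}|u-u_{B_R}|^{2}\,dx$, and $\int_{B_1}|\nabla v|^{2}\,dy=R^{2-n}\int_{B_R}|\nabla u|^{2}\,dx$; hence the claimed estimate with constant $C$ at radius $R$ is equivalent to the same estimate with the same $C$ at radius $1$. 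By translation invariance we may also take $P=0$.

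It remains to prove that there is a dimensional constant $C=C(n)$ with $\int_{B_1}|v-v_{B_1}|^{2}\leq C\int_{B_1}|\nabla v|^{2}$ for all $v\in W^{1,2}(B_1)$, and I would argue by contradiction. If this failed for every $C$, one could pick $v_k\in W^{1,2}(B_1)$ with $\int_{B_1}|v_k-(v_k)_{B_1}|^{2}=1$ while $\int_{B_1}|\nabla v_k|^{2}\to 0$. Setting $w_k:=v_k-(v_k)_{B_1}$ gives $(w_k)_{B_1}=0$, $\|w_k\|_{L^2(B_1)}=1$, and $\|\nabla w_k\|_{L^2(B_1)}\to 0$, so $\{w_k\}$ is bounded in $W^{1,2}(B_1)$. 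Since $B_1$ is a bounded Lipschitz (in fact convex) domain, the Rellich--Kondrachov theorem yields a subsequence, not relabeled, converging strongly in $L^2(B_1)$ to some $w$ and weakly in $W^{1,2}(B_1)$; strong $L^2$ convergence forces $\|w\|_{L^2(B_1)}=1$ and $(w)_{B_1}=0$, while weak convergence of the gradients together with $\nabla w_k\to 0$ in $L^2$ forces $\nabla w=0$. As $B_1$ is connected, $w$ equals a constant a.e., and that constant is $0$ because $(w)_{B_1}=0$, contradicting $\|w\|_{L^2(B_1)}=1$.

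The only genuinely nontrivial ingredient is the compact embedding of $W^{1,2}(B_1)$ into $L^2(B_1)$ invoked in the contradiction step; the remainder is bookkeeping with dilations plus the elementary fact that a $W^{1,2}$ function with vanishing distributional gradient on a connected open set is constant. I note that one may instead give a direct argument with an explicit constant: since $B_R$ is convex, for a.e.\ $x,y\in B_R$ one has $u(x)-u(y)=\int_{0}^{1}\nabla u\bigl(y+t(x-y)\bigr)\cdot(x-y)\,dt$; averaging this identity in $y$ over $B_R$, using Cauchy--Schwarz and Fubini, and bounding the resulting Riesz-type kernel $|x-z|^{1-n}$ over $B_R$ produces the stated estimate with $C=C(n)$. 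Either route suffices, and since only the existence of a dimensional constant is needed in the sequel I would present the compactness argument for brevity.
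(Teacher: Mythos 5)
Your argument is correct: the reduction to the scalar case, the dilation to $B_1$, and the compactness (Rellich--Kondrachov) contradiction argument are all standard and carried out accurately, and the scaling bookkeeping ($\int_{B_1}|\nabla v|^2 = R^{2-n}\int_{B_R}|\nabla u|^2$, etc.) checks out. The paper itself offers no proof of this proposition --- it is recalled as classical background in Section~2 --- so there is nothing to compare against; either of the two routes you sketch (compactness, or the direct convexity/Riesz-potential estimate) is a perfectly acceptable way to supply the missing details.
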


\begin{proposition}\label{caccio}{\rm (}Cacciopoli inequalities{\rm )} 
Let $\Omega$ be as in \eqref{Eqqq-1}, and let $\mathcal L$ as in \eqref{ES} satisfy 
the Legendre condition \eqref{EllipA}. In addition, assume that there exists some $M\in(0,\infty)$ 
with the property that for a.e. $x\in\Omega$ one has $|B(x)|\le M\delta^{-1}(x)$.

Then there exists a finite positive constant $C=C(n,N,\lambda,\Lambda, M)>0$ such that if $B_{2R}\subset\Omega$
and $u\in W^{1,2}(B_{2R};{\BBR}^{N})$ solves $\mathcal Lu=0$ in $B_{2R}$ it follows that 
\begin{gather*}
\int_{B_{R}}|\nabla u|^{2}\,dx\leq CR^{-2}\int_{B_{2R}}|u|^{2}\,dx. 
\end{gather*}

Additionally, if $M>0$ is sufficiently small, there also exists a finite positive constant $C=C(n,N,\lambda,\Lambda)>0$ such that if $R>0$ and 
$u\in W^{1,2}(T(\Delta_{2R}))$ satisfies $\mathcal Lu=0$ in $T(\Delta_{2R})$ as well as 
${\rm Tr}\,u=0$ on $\Delta_{2R}$ then
\begin{gather*}
\int_{T(\Delta_{R})}|\nabla u|^{2}\,dx\leq CR^{-2}\int_{T(\Delta_{2R})}|u|^{2}\,dx. 
\end{gather*}
\end{proposition}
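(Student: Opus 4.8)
The plan is to prove both estimates by the classical Caccioppoli argument: choose a cutoff $\eta$, test the equation $\mathcal{L}u=0$ against $\varphi=\eta^{2}u$, use the Legendre condition \eqref{EllipA} to bound $\lambda\int\eta^{2}|\nabla u|^{2}$ from below, and absorb the resulting error terms into the left-hand side.

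\emph{Interior estimate.} Let $P$ be the common center of $B_{R}\subset B_{2R}$ and pick $\eta\in C^{\infty}_{0}(B_{3R/2}(P))$ with $\eta\equiv1$ on $B_{R}$, $0\le\eta\le1$ and $|\nabla\eta|\le C/R$. Since $B_{2R}\subset\Omega$, for every $x\in\operatorname{supp}\eta$ we have $B_{R/2}(x)\subset B_{2R}\subset\Omega$, hence $\delta(x)\ge R/2$ and therefore $|B(x)|\le M\delta^{-1}(x)\le 2M/R$ on $\operatorname{supp}\eta$. Inserting $\varphi=\eta^{2}u$ into the weak formulation, using $\partial_{i}(\eta^{2}u_{\alpha})=\eta^{2}\partial_{i}u_{\alpha}+2\eta(\partial_{i}\eta)u_{\alpha}$ and rearranging gives $\int A_{ij}^{\alpha\beta}\,\partial_{j}u_{\beta}\,\partial_{i}u_{\alpha}\,\eta^{2}=-2\int A_{ij}^{\alpha\beta}\,\partial_{j}u_{\beta}\,u_{\alpha}\,\eta\,\partial_{i}\eta+\int B_{i}^{\alpha\beta}\,\partial_{i}u_{\beta}\,u_{\alpha}\,\eta^{2}$. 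The left side is $\ge\lambda\int\eta^{2}|\nabla u|^{2}$ by \eqref{EllipA}, while the two terms on the right are bounded, via Cauchy--Schwarz, the estimates $|A|\le\Lambda$ and $|B|\le 2M/R$ on $\operatorname{supp}\eta$, and Young's inequality with a small parameter, by $\tfrac{\lambda}{2}\int\eta^{2}|\nabla u|^{2}+C(n,N,\lambda,\Lambda,M)R^{-2}\int_{B_{2R}}|u|^{2}$. Absorbing the gradient term yields the first inequality.

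\emph{Boundary estimate.} Now take $\eta\in C^{\infty}_{0}(B_{2R}(Q))$ with $\eta\equiv1$ on $B_{R}(Q)$, $0\le\eta\le1$, $|\nabla\eta|\le C/R$, and run the same computation on $T(\Delta_{2R})$; here one first checks that $\eta^{2}u$ is an admissible test function, which holds by density because $\operatorname{Tr}u=0$ on $\Delta_{2R}$ together with $\operatorname{supp}\eta\subset B_{2R}(Q)$ places $\eta^{2}u$ in the closure of $C^{\infty}_{0}(\Omega\cap B_{2R}(Q))$ in $W^{1,2}$. The difference from the interior case is that $\delta$ is no longer bounded below on $\operatorname{supp}\eta$, so the first-order term is the delicate one. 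Since $\operatorname{Tr}(\eta u)$ vanishes on all of $\partial\Omega$ (it equals $0$ on $\Delta_{2R}$ and $\eta$ vanishes elsewhere), Hardy's inequality for the Lipschitz domain $\Omega$ gives
\[
\int_{\Omega}\delta^{-2}\eta^{2}|u|^{2}\,dx\le C(n,L)\int_{\Omega}|\nabla(\eta u)|^{2}\,dx\le C(n,L)\Big(\int\eta^{2}|\nabla u|^{2}\,dx+CR^{-2}\int_{T(\Delta_{2R})}|u|^{2}\,dx\Big).
\]
Estimating the first-order term by $M\int(\eta|\nabla u|)(\eta\,\delta^{-1}|u|)\le\varepsilon\int\eta^{2}|\nabla u|^{2}+\tfrac{M^{2}}{4\varepsilon}\int\eta^{2}\delta^{-2}|u|^{2}$, inserting the Hardy bound, choosing $\varepsilon$ small and then requiring $M$ small enough (depending on $n$, $N$, $\lambda$, $\Lambda$ and the Lipschitz constant $L$) so that the total coefficient of $\int\eta^{2}|\nabla u|^{2}$ on the right stays strictly below $\lambda$, one absorbs and arrives at $\int_{T(\Delta_{R})}|\nabla u|^{2}\le CR^{-2}\int_{T(\Delta_{2R})}|u|^{2}$.

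The main obstacle is exactly this first-order term in the boundary estimate: unlike the interior case there is no room to retreat the cutoff away from $\partial\Omega$, so the singular weight $\delta^{-1}$ in the bound for $B$ must be absorbed through Hardy's inequality, whose constant $C(n,L)$ can be defeated only when $M$ is small --- this is the origin of the ``$M$ sufficiently small'' hypothesis. A secondary, purely technical point is the density argument that makes $\eta^{2}u$ a legitimate test function when $\operatorname{Tr}u$ is known to vanish on $\Delta_{2R}$ only; alternatively one could localize Hardy's inequality directly to $\Omega\cap B_{2R}(Q)$, but applying it on $\Omega$ to $\eta u$ keeps the constant dimensional and the argument shorter.
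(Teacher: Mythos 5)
Your proof is correct. The paper itself states this proposition without proof (it is recalled from \cite{DHM}), but your argument is the standard Caccioppoli computation, and you correctly identify the one genuinely delicate point: near the boundary the drift bound $|B|\le M\delta^{-1}$ produces the weight $\delta^{-2}$ on $|u|^2$, which must be converted back into $\int\eta^2|\nabla u|^2$ via Hardy's inequality for $\eta u\in W^{1,2}_0(\Omega)$, and this absorption is exactly what forces the hypothesis that $M$ be small. The only cosmetic remark is that your smallness threshold for $M$ picks up the Hardy constant $C(n,L)$ and hence the Lipschitz constant $L$; this is harmless here since $L$ is assumed bounded (indeed small) throughout the paper.
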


\begin{proposition}\label{Ciac2}(A Cacciopoli inequality for the second gradient) Let $\mathcal L$ as in \eqref{ES} satisfy 
the Legendre-Hadamard condition \eqref{EllipLH}. In addition assume that $|\nabla A|,|\boldsymbol{B}|\le M/R$ on $B_{2R}$ for some $M>0$. 

Then there exists a finite positive constant $C=C(n,N,\lambda,\Lambda, M)>0$ such that if $B_{2R}\subset\Omega$
and $u\in W^{1,2}(B_{2R};{\BBR}^{N})$ solves $\mathcal Lu=0$ in $B_{2R}$ it follows that
\begin{gather*}
\int_{B_{R}}|\nabla^2 u|^{2}\,dx\leq CR^{-2}\int_{B_{2R}}|\nabla u|^{2}\,dx. 
\end{gather*}
\end{proposition}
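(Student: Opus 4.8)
The plan is to mimic the classical interior Caccioppoli estimate for the second gradient, with the key twist that under only the Legendre--Hadamard condition we cannot test the equation with arbitrary matrix-valued test functions, but we \emph{can} use difference quotients, which preserve the vector structure that makes \eqref{EllipLH} applicable. Concretely, fix $B_{2R}\subset\Omega$, pick a cutoff $\zeta\in C_0^\infty(B_{3R/2})$ with $\zeta\equiv 1$ on $B_R$, $|\nabla\zeta|\lesssim R^{-1}$, and for a tangential direction $e_h$ and small $t$ let $\Delta_t^h v(x) = t^{-1}(v(x+te_h)-v(x))$. First I would apply $\Delta_t^h$ to the equation $\mathcal Lu=0$. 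Because the coefficients are now $x$-dependent, $\Delta_t^h$ does not commute with $\mathcal L$; instead $\Delta_t^h u$ solves a perturbed equation whose right-hand side involves $\Delta_t^h A$ and $\Delta_t^h B$ paired with first derivatives of $u$ (and of its translate). This is where the hypothesis $|\nabla A|,|\boldsymbol B|\le M/R$ enters: it bounds $|\Delta_t^h A|\lesssim M/R$ and controls the first-order term, so all the ``error'' terms are of the same order as in the standard first-gradient Caccioppoli inequality, just with $u$ replaced by $\nabla u$ on the right and $\nabla u$'s difference quotient on the left.

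Next I would test the difference-quotiented equation with $\varphi = \zeta^2\,\Delta_t^h u$. The principal term produces $\int A_{ij}^{\alpha\beta}\,\partial_j(\Delta_t^h u_\beta)\,\partial_i(\zeta^2 \Delta_t^h u_\alpha)$; expanding the derivative on $\zeta^2\Delta_t^h u$ and moving the cross term to the right via Young's inequality, the leading piece is $\int \zeta^2 A_{ij}^{\alpha\beta}\partial_j(\Delta_t^h u_\beta)\partial_i(\Delta_t^h u_\alpha)$. Here is the one genuinely delicate point: to bound this \emph{from below} by $\lambda\int\zeta^2|\nabla \Delta_t^h u|^2$ I cannot apply \eqref{EllipLH} pointwise, since $\nabla \Delta_t^h u$ is not a rank-one tensor $p^\alpha q_i$. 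The standard resolution is the G\r{a}rding-type inequality: \eqref{EllipLH} with constant coefficients gives, via Plancherel on $\zeta\Delta_t^h u$ extended by zero, the bound $\int|\nabla(\zeta\Delta_t^h u)|^2 \lesssim \lambda^{-1}\int A_{ij}^{\alpha\beta}\partial_j(\zeta\Delta_t^h u_\beta)\partial_i(\zeta\Delta_t^h u_\alpha)$; one then trades $\nabla(\zeta\Delta_t^h u)$ for $\zeta\nabla\Delta_t^h u$ at the cost of lower-order terms absorbing $R^{-2}|\nabla\zeta|^{-2}$-type factors, and compares the frozen-coefficient quadratic form with the true one, the difference being harmless because $A$ is at least continuous (indeed Lipschitz) at scale $R$ by the gradient bound. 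I expect this G\r{a}rding/frozen-coefficient step to be the main obstacle, and I would handle it exactly as in the constant-coefficient systems literature, localizing so that the oscillation of $A$ over $B_{3R/2}$ is controlled by $MR\cdot R^{-1}=M$, chosen small.

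After that the argument is routine: collect the estimate $\int\zeta^2|\nabla\Delta_t^h u|^2 \lesssim R^{-2}\int_{B_{3R/2}}|\Delta_t^h u|^2 + \int_{B_{3R/2}}|\nabla u|^2 + (\text{error terms})$, where the error terms are $O(M^2)\int_{B_{3R/2}}|\nabla u|^2$ plus cross terms absorbed by Young; use the standard difference-quotient bound $\|\Delta_t^h u\|_{L^2(B_{3R/2})}\lesssim \|\partial_h u\|_{L^2(B_{7R/4})}\lesssim \|\nabla u\|_{L^2(B_{2R})}$ to replace the first term on the right. This yields $\|\nabla \Delta_t^h u\|_{L^2(B_R)}\lesssim R^{-1}\|\nabla u\|_{L^2(B_{2R})}$ uniformly in $t$, hence $\partial_h\nabla u\in L^2(B_R)$ with the same bound for every tangential $h$. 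Finally I would recover the missing pure second normal derivative $\partial_0^2 u$ directly from the PDE: since $A_{00}^{\alpha\beta}$ is invertible (a consequence of \eqref{EllipLH} with $q=e_0$), solving $\mathcal Lu=0$ for $\partial_0(A_{00}^{\alpha\beta}\partial_0 u_\beta)$ expresses $\partial_0^2 u$ in terms of the already-controlled mixed derivatives $\partial_h\nabla u$, the term $\nabla A\cdot\nabla u$, and $B\cdot\nabla u$, all bounded by $R^{-1}\|\nabla u\|_{L^2(B_{2R})}$ using $|\nabla A|,|B|\le M/R$. Summing over all components gives $\int_{B_R}|\nabla^2 u|^2\le CR^{-2}\int_{B_{2R}}|\nabla u|^2$ with $C=C(n,N,\lambda,\Lambda,M)$, as claimed.
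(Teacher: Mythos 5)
Your proposal follows essentially the same route as the paper: the derivative of the solution satisfies a perturbed system of the same type (with extra first-order terms coming from $\nabla A$ and $B$, controlled by the hypothesis $|\nabla A|,|B|\le M/R$), and one applies the first-gradient Caccioppoli argument to it; the paper differentiates the equation formally and invokes the first-gradient Caccioppoli of \cite{DHM}, while you implement the same idea via difference quotients and recover $\partial_0^2u$ from the equation, which is a more careful rendering of the same argument (and your explicit G\r{a}rding/freezing step is indeed what is needed under Legendre--Hadamard rather than Legendre ellipticity).

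One concrete slip to fix in the G\r{a}rding step: you localize to $B_{3R/2}$ and assert the oscillation of $A$ there is ``$M$, chosen small.'' But $M$ is a given datum of the proposition, not a parameter you may choose, and the constant $C$ is merely allowed to \emph{depend} on $M$; if $M$ is large the frozen-coefficient comparison fails at scale $R$. The standard repair is to run the freezing argument on balls of radius $\rho\sim \varepsilon_0 R/M$ (so that $\operatorname{osc}A\lesssim M\rho/R=\varepsilon_0$ is genuinely small compared with $\lambda$), cover $B_R$ by $O((M/\varepsilon_0)^n)$ such balls, and sum; this produces the claimed estimate with $C=C(n,N,\lambda,\Lambda,M)$. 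With that adjustment the argument is complete.
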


\begin{proof} We will be brief and only outline the main idea. For each $0\le i\le n-1$ let $v_i=\partial_{x_i}u$. We write PDE system each function $v_i:B_{2R}\to \mathbb R^N$ satisfy. As $A$ is differentiable we can easily see that for each $i$ we have
$${\mathcal L}_i v_i=0,\qquad\mbox{where}\qquad {\mathcal L}_i={\mathcal L} + \mbox{1st order terms},$$
where ${\mathcal L}$ is our original operator (i.e., ${\mathcal L}u=0$). Now we follow the argument as in the proof of Proposition 2.7 of \cite{DHM} for each $v_i$. The only difference is that we have one extra first order term (when the derivative $\partial_{x_i}$ falls on $B$). That term does not cause an issue (we move the derivative $\partial_{x_i}$ from $B$ via integration by parts). We ultimately obtain by the same argument as in \cite{DHM} an estimate
\begin{gather*}
\int_{B_{R}}|\nabla v_i|^{2}\,dx\leq CR^{-2}\int_{B_{2R}}|\nabla u|^{2}\,dx. 
\end{gather*}
for a each $i$. From this our claim follows.
\end{proof}

\section{The $L^p$-Dirichlet and Regularity problems}
\label{S3}

We are ready to define the $L^p$-Dirichlet problem essentially following \cite{DHM}.
We first recall the classical solvability via the Lax-Milgram lemma in a domain 
$\Omega$ as in \eqref{Omega-111}. Recall, that under assumptions of strong ellipticity 
it can be shown via standard arguments that given any $f\in \dot{B}^{2,2}_{1/2}(\partial\Omega;{\BBR}^N)$ 
(this is the space of traces of functions in $\dot{W}^{1,2}(\Omega;{\BBR}^N)$) there exists a unique 
$u\in \dot{W}^{1,2}(\Omega;{\BBR}^N)$ such that $\mathcal{L}u=0$ in $\Omega$ for $\mathcal L$ 
given by \eqref{ES} and ${\rm Tr}\,u=f$ on $\partial\Omega$. 
We will call such $u\in \dot{W}^{1,2}(\Omega;{\BBR}^N)$ the {\it energy solution} of the elliptic system 
$\mathcal{L}$ in $\Omega$. For details see the beginning of section 3 of \cite{DHM} where the bilinear form is properly defined and its properties required for the application of the Lax-Milgram lemma such as its boundedness and coercivity are verified.

With this in hand, we can now define the notion of $L^p$ solvability. 

\begin{definition}\label{D:Dirichlet} 
Let $\Omega$ be the Lipschitz domain introduced in \eqref{Omega-111} and fix an integrability exponent 
$p\in(1,\infty)$. Also, fix a background parameter $a>0$. Consider the following Dirichlet problem 
for a vector valued function $u:\Omega\to{\BBR}^N$:
\begin{equation}\label{E:D}
\left\{
\begin{array}{l}
0=\partial_{i}\left(A_{ij}^{\alpha \beta}(x)\partial_{j}u_{\beta}\right) 
+B_{i}^{\alpha\beta}(x)\partial_{i}u_{\beta}\,\text{ in }\,\Omega,\,\,\,\alpha\in\{1,2,\dots,N\},
\\[6pt]
u(x)=f(x)\,\,\,\text{ for $\sigma$-a.e. }\,\,x\in\partial\Omega, 
\\[6pt]
\tilde{N}_a(u) \in L^{p}(\partial \Omega), 
\end{array}
\right.
\end{equation}
where the usual summation convention over repeated indices {\rm (}$i,j$ and $\beta$ in this case{\rm )} 
is employed. We say the Dirichlet problem \eqref{E:D} is solvable for a given $p\in(1,\infty)$ if there exists a finite constant 
$C=C(\lambda, \Lambda, n, p,\Omega)>0$ such that the unique energy solution $u\in \dot{W}^{1,2}(\Omega;{\BBR}^N)$, 
provided by the Lax-Milgram lemma, corresponding to a boundary datum 
$f\in L^p(\partial\Omega;{\BBR}^N)\cap \dot{B}^{2,2}_{1/2}(\partial\Omega;{\BBR}^N)$, satisfies the estimate
\begin{equation}\label{y7tGV}
\|\tilde{N}_a u\|_{L^{p}(\partial\Omega)}\leq C\|f\|_{L^{p}(\partial\Omega;{\BBR}^N)}.
\end{equation}
In \eqref{E:D} the solution $u$ agrees with $f$ at the boundary in the sense of trace on 
$\dot{W}^{1,2}(\Omega;{\BBR}^N)$ as well as in the sense of a.e. limit \eqref{eq-ae}, as explained below.
\end{definition}

\noindent{\it Remark.} By Lax-Milgram lemma the solution $u$ of \eqref{E:D} is unique 
in the space $\dot{W}^{1,2}(\Omega;{\BBR}^N)$ modulo constants (in ${\BBR}^N$). 
Our additional assumption that at $\sigma$-a.e. point on $\partial\Omega$ we have $u=f\in L^p(\partial\Omega;{\BBR}^N)$  
eliminates the constant solutions and, hence, guarantees genuine uniqueness. Since the space 
$\dot{B}^{2,2}_{1/2}(\partial\Omega;{\BBR}^N)\cap L^p(\partial\Omega;{\BBR}^N)$ is dense in 
$L^p(\partial\Omega;{\BBR}^N)$ for each $p\in(1,\infty)$, it follows that there exists a 
unique continuous extension of the solution operator
\begin{equation}\label{Sol-OP}
f\mapsto u
\end{equation}
to the whole space $L^p(\partial\Omega;{\BBR}^N)$, with $u$ such that $\tilde{N}_a u\in L^p(\partial\Omega)$ 
and the accompanying estimate $\|\tilde{N}_a u \|_{L^{p}(\partial \Omega)} 
\leq C\|f\|_{L^{p}(\partial\Omega;{\BBR}^N)}$ being valid. It is a legitimate question to consider in what sense
we have a convergence of $u$ given by the solution operator \eqref{Sol-OP} to its boundary datum 
$f\in L^{p}(\partial\Omega;{\BBR}^N)$. The answer can be found in the appendix of paper \cite{DP} 
(the proof is given for scalar operators but adapts in a straightforward way to our situation). 
Consider the average $u_{av}:\Omega\to {\mathbb R}^N$ defined by
$$
{u}_{av}(x)=\dint_{B_{\delta(x)/2}(x)} u(y)\,dy,\quad \forall x\in \Omega.
$$
Then 
\begin{equation}\label{eq-ae}
f(Q)=\lim_{x\to Q,\,x\in\Gamma(Q)} u_{av}(x),\qquad\text{for $\sigma$-a.e. }Q\in\partial\Omega.
\end{equation}

We can similarly define the Regularity problem.
\begin{definition}\label{D:Regularity} 
Let $\Omega$ be the Lipschitz domain introduced in \eqref{Omega-111} and fix an integrability exponent 
$p\in(1,\infty)$. Also, fix a background parameter $a>0$. Consider the following Dirichlet problem 
for a vector valued function $u:\Omega\to{\BBR}^N$:
\begin{equation}\label{E:R}
\begin{cases}
0=\partial_{i}\left(A_{ij}^{\alpha \beta}(x)\partial_{j}u_{\beta}\right) 
+B_{i}^{\alpha\beta}(x)\partial_{i}u_{\beta} 
& \text{in } \Omega,\quad\alpha\in\{1,2,\dots,N\}
\\[4pt]
u(x)=f(x) & \text{ for $\sigma$-a.e. }\,x\in\partial\Omega, 
\\[4pt]
\tilde{N}_a(\nabla u) \in L^{p}(\partial \Omega), &
\end{cases}
\end{equation}
where the usual Einstein summation convention over repeated indices ($i,j$ and $\beta$ in this case) 
is employed. We say the Regularity problem \eqref{E:R} is solvable for a given $p\in(1,\infty)$ if there exists 
$C=C(\lambda, \Lambda, n, p,\Omega)>0$ such that any energy solution $u\in \dot{W}^{1,2}(\Omega;{\BBR}^N)$, 
provided by the Lax-Milgram lemma, corresponding to a boundary datum 
$f$ with $\nabla_Tf \in L^p(\partial\Omega;{\BBR}^N)$ satisfies the estimate
\begin{equation}\label{y7tGVd}
\|\tilde{N}_a (\nabla u)\|_{L^{p}(\partial\Omega)}\leq C\|\nabla_T f\|_{L^{p}(\partial\Omega;{\BBR}^N)}.
\end{equation}
Here $\nabla_T$ denotes the ${\mathcal H}^{n-1}$-a.e. defined tangential gradient on $\partial\Omega$.
\end{definition}

\section{Estimates for the square function $S(u)$ and $S(\nabla u)$ of a solution}
\label{S4}

In this section we establish a one sided estimate of the square function in terms of boundary data and 
the nontangential maximal function. We shall only work in the case $\Omega=\BBR^n_+$ with a coefficient tensor satisfying $A_{0j}^{\alpha\beta}=\delta_{\alpha\beta}\delta_{0j}$ since by section 2 of \cite{DHM} the problem can alway be reduced to this case. We shall return to this point in sections 6-8 where we outline proofs of our three main results.

For brevity of our argument we will perform below a common calculation which we then apply in different settings. The first case we will consider is the perturbation case. Let 
\begin{equation}\label{ES-new}
\mathcal{L}u=\left[ \partial_{i} \left(A_{ij}^{\alpha \beta}(x) \partial_{j} u_{\beta}\right)
+B_{i}^{\alpha \beta}(x) \partial_{i}u_{\beta}\right]_{\alpha}
\end{equation}
be an operator that satisfies $A_{0j}^{\alpha\beta}=\delta_{\alpha\beta}\delta_{0j}$ and the Carleson condition \eqref{Car_hatAAxxx}.

Consider an another operator 
\begin{equation}\label{ES-new2}
\mathcal{L}_1u=\left[ \partial_{i} \left(\bar{A}_{ij}^{\alpha \beta}(x) \partial_{j} u_{\beta}\right)
+\bar{B}_{i}^{\alpha \beta}(x) \partial_{i}u_{\beta}\right]_{\alpha}
\end{equation}

for which the only information we have is that its coefficients are close to ${\mathcal L}$ in the following sense that
\begin{equation}\label{Car-m}
dm(x)=\left[\left(\sup_{B_{\delta(x)/2}(x)}|A-\bar{A}|\right)^{2}\delta^{-1}(x)
+\left(\sup_{B_{\delta(x)/2}(x)}|B-\bar{B}|\right)^{2}\delta(x)\right] \,dx
\end{equation}
is a Carleson measure with small Carleson norm $\|m\|_{\mathcal C}$. It follows that if we denote by
$$\varepsilon_{ij}^{\alpha\beta}={A}_{ij}^{\alpha \beta}-\bar{A}_{ij}^{\alpha \beta},\qquad b_i^{\alpha\beta}=
{B}_{i}^{\alpha \beta}-\bar{B}_{i}^{\alpha \beta},$$
then any solution of ${\mathcal L}_1u=0$ can be written as
\begin{equation}\label{eq-sys-pert}
[{\mathcal L}u]_\alpha=\left[\partial_i(\varepsilon_{ij}^{\alpha\beta}\partial_ju_\beta)+b_i^{\alpha\beta}\partial_iu_\beta\right]_\alpha.
\end{equation}

The second case we consider is as follows. Let $u$ to be a solution to ${\mathcal L}u=0$ where ${\mathcal L}$ is as in \eqref{ES-new} again satisfying $A_{0j}^{\alpha\beta}=\delta_{\alpha\beta}\delta_{0j}$ and the Carleson condition \eqref{Car_hatAAxxx}.
Denote by $w^k_\alpha=\partial_k u_\alpha$ for some fixed $k=0,1,\dots, n-1$. If follows that $w^k=(w^k_\alpha)$
solves the following elliptic system
\begin{equation}\label{eq-sys-deriv}
[{\mathcal L}w^k]_\alpha=\left[\partial_i\left(\partial_k({A}_{ij}^{\alpha \beta})w^j_\beta\right)+\partial_k(B_i^{\alpha\beta})w^i_\beta\right]_\alpha.
\end{equation}

Observe that both \eqref{eq-sys-pert} and \eqref{eq-sys-deriv} have similar structures and hence can be dealt with (mostly) in parallel. In what follows the vector $v$ either represents $u$ as in \eqref{eq-sys-pert} or alternatively it is one component $w^k$ for some $k$ fixed. Let us denote by ${\mathcal L}_0$ the second order part of the operator \eqref{ES-new}, that is
\begin{equation}\label{ES-l0}
\mathcal{L}_0v=\left[ \partial_{i} \left(A_{ij}^{\alpha \beta}(x) \partial_{j} v_{\beta}\right)\right]_{\alpha}.
\end{equation}

\vglue1mm

Fix an arbitrary $y'\in\partial\Omega\equiv{\mathbb{R}}^{n-1}$ and pick 
a smooth cutoff function $\zeta$ which is $x_0-$independent and satisfies
\begin{equation}\label{cutoff-F}
\zeta= 
\begin{cases}
1 & \text{ in } B_{r}(y'), 
\\
0 & \text{ outside } B_{2r}(y').
\end{cases}
\end{equation}
Moreover, assume that $r|\nabla \zeta| \leq c$ for some positive constant $c$ independent of $y'$. 
We begin by considering the integral quantity 
\begin{equation}\label{A00}
\mathcal{I}:=\iint_{[0,r]\times B_{2r}(y')}A_{ij}^{\alpha\beta}\partial_{j}v_{\beta} 
\partial_{i}v_{\alpha}x_0\zeta\,dx'\,dx_0
\end{equation}
with the usual summation convention understood. In relation to this we note that the uniform 
ellipticity \eqref{EllipA} gives 
\begin{equation}\label{cutoff-AA}
\mathcal{I}\geq{\lambda}\iint_{[0,r]\times B_{2r}}\sum_{\alpha}|\nabla v_{\alpha}|^2 x_0\zeta\,dx'\,dx_0
={\lambda}\iint_{[0,r]\times B_{2r}}|\nabla v|^2 x_0\zeta\,dx'\,dx_0,
\end{equation}
where we agree henceforth to abbreviate $B_{2r}:=B_{2r}(y')$ whenever convenient. 
The idea now is to integrate by parts the formula for $\mathcal I$ in order 
to relocate the $\partial_i$ derivative. This gives 
\begin{align}\label{I+...+IV}
\mathcal{I}
&= \int_{\partial\left[(0,r)\times B_{2r}\right]} 
A_{ij}^{\alpha\beta}\partial_{j}v_{\beta}v_{\alpha}x_0\zeta\nu_{x_i}\,d\sigma 
\nonumber\\[4pt]
&\quad -\iint_{[0,r]\times B_{2r}}({\mathcal L}_0v\cdot v)\, x_0\zeta\,dx'\,dx_0 
\nonumber\\[4pt]
&\quad -\iint_{[0,r]\times B_{2r}}A_{ij}^{\alpha\beta}\partial_{j}v_{\beta}v_{\alpha}\partial_{i}x_0\zeta\,dx'\,dx_0 
\nonumber\\[4pt]
&\quad -\iint_{[0,r]\times B_{2r}}A_{ij}^{\alpha\beta}\partial_{j}v_{\beta}v_{\alpha}x_0\partial_{i}\zeta\,dx'\,dx_0
\nonumber\\[4pt]
&=:I+II+III+IV,
\end{align}
where $\nu$ is the outer unit normal vector to $(0,r)\times B_{2r}(y')$. 
Bearing in mind $A_{0j}=0_{N\times N}$ and upon recalling that we are assuming $A_{00}=I_{N\times N}$, 
the boundary term $I$ simply becomes
\begin{equation}\label{cutoff-BBB}
I=\int_{B_{2r}}\partial_{0}v_{\beta}(r,x')v_{\beta}(r,x')\,r\,\zeta\,dx'.
\end{equation}

As $\partial_ix_0=0$ for $i>0$ the term $III$ is non-vanishing only for $i=0$. We further split this term
by considering the cases when $j=0$ and $j>0$. When $j=0$, we use that $A_{00}^{\alpha\beta}=I_{N\times N}$. 
This yields 
\begin{align}\label{u6fF}
III_{\{j=0\}} &=-\frac{1}{2}\iint_{[0,r]\times B_{2r}} 
\sum_\beta\partial_{0}\left(v_{\beta}^{2}\zeta\right)\,dx'\,dx_0 
\nonumber\\[4pt]
&=-\frac{1}{2}\int_{B_{2r}}\sum_\beta v_{\beta}(r,x')^{2}\zeta\,dx'
+\frac{1}{2}\int_{B_{2r}}\sum_\beta v_{\beta}(0,x')^{2}\zeta\,dx'.
\end{align}
Corresponding to $j>0$ we simply recall $A_{0j}=0_{N\times N}$ to conclude that $III_{\{j>0\}}=0$. 

Since $\mathcal L-{\mathcal L_0}$ gives us the first order term we can write $II$ as

\begin{equation}\label{cutoff-CCC}
II=\iint_{[0,r]\times B_{2r}}B_{i}^{\alpha\beta}(\partial_{i}v_{\beta})v_{\alpha}x_0\zeta\,dx'\,dx_0 -\iint_{[0,r]\times B_{2r}}({\mathcal L}v\cdot v)\, x_0\zeta\,dx'\,dx_0 .
\end{equation}
To further estimate the first term we use Cauchy-Schwarz inequality, the Carleson condition for $B$ and 
Theorem~\ref{T:Car} in order to write
\begin{align}\label{TWO-TWO}
II_1 &\leq\left(\iint_{[0,r]\times B_{2r}}\left(B_i^{\alpha\beta}\right)^{2} 
|v_{\alpha}|^{2} x_0\zeta\,dx'\,dx_0\right)^{1/2}  
\cdot\left(\iint_{[0,r]\times B_{2r}}|\partial_{j}v_{\beta}|^{2}x_0\zeta\,dx'\,dx_0\right)^{1/2} 
\nonumber\\[4pt]
&\leq C(\lambda,\Lambda,N)\left(\|\mu\|_{\mathcal{C}}\int_{B_{2r}} 
\left[\tilde{N}^r_a(v)\right]^{2}\,dx'\right)^{1/2}\cdot\mathcal{I}^{1/2}. 
\end{align}

We add up all terms we have so far to obtain
\begin{align}\label{square01}
\mathcal{I} &\leq\int_{B_{2r}}\partial_{0}v_{\beta}(r,x')v_{\beta}(r,x')\,r\,\zeta\,dx' + II_2
\nonumber\\[4pt]
&\quad-\frac{1}{2}\int_{B_{2r}}\sum_\beta v_{\beta}(r,x')^{2}\zeta\,dx'
+\frac{1}{2}\int_{B_{2r}}\sum_\beta v_{\beta}(0,x')^{2}\zeta\,dx' 
\nonumber\\[4pt]
&\quad +C(\lambda,\Lambda,n,N)\|\mu\|_{\mathcal{C}}\int_{B_{2r}}\left[\tilde{N}^{r}_a(v)\right]^2\,dx' 
+\frac14\mathcal{I}+IV,
\end{align}
where we have used the arithmetic-geometric inequality for expression bounding the term $II_1$ in \eqref{TWO-TWO}. Here
\begin{equation}\label{eq-2-1}
II_2=-\iint_{[0,r]\times B_{2r}}({\mathcal L}v\cdot v)\, x_0\zeta\,dx'\,dx_0.
\end{equation}
\vglue1mm

Consider now the situation where ${\mathcal L}v$ is as in \eqref{eq-sys-pert}. We can then further estimate
the term $II_2$. Using the righthand side of  \eqref{eq-sys-pert} we have:
\begin{align}\label{eq-2-2}
II_2=& -\iint_{[0,r]\times B_{2r}}(\partial_i(\varepsilon_{ij}^{\alpha\beta}\partial_jv_\beta)+b_i^{\alpha\beta}\partial_iv_\beta) v_\alpha\, x_0\zeta\,dx'\,dx_0\\\nonumber
=&\quad\iint_{[0,r]\times B_{2r}}\varepsilon_{ij}^{\alpha\beta}\partial_jv_\beta \partial_iv_\alpha\, x_0\zeta\,dx'\,dx_0\\\nonumber
+&\quad\iint_{[0,r]\times B_{2r}}\varepsilon_{0j}^{\alpha\beta}\partial_jv_\beta v_\alpha\, \zeta\,dx'\,dx_0\\\nonumber
+&\quad\iint_{[0,r]\times B_{2r}}\varepsilon_{ij}^{\alpha\beta}\partial_jv_\beta v_\alpha\, x_0\partial_i\zeta\,dx'\,dx_0\\\nonumber
+&\quad\int_{B_{2r}}\varepsilon_{0j}^{\alpha\beta}\partial_jv_\beta v_\alpha\,r\,\zeta\,dx'\\\nonumber
-&\quad\iint_{[0,r]\times B_{2r}}b_i^{\alpha\beta}\partial_iv_\beta v_\alpha\, x_0\zeta\,dx'\,dx_0=II_{21}+II_{22}+II_{23}+II_{24}+II_{25}.
\end{align}

The term $II_{21}$ (using the Cauchy-Schwarz) is bounded by the square function of $v$, using \eqref{Car-m} we have
$|\varepsilon_{ij}^{\alpha\beta}|\lesssim\|m\|^{1/2}_{\mathcal C}$. For terms $II_{22}$ and $II_{25}$ we have estimates very similar to \eqref{TWO-TWO}. This gives using the AG-inequality
\begin{align}\label{eq-2-3}
|III_{21}|&+|II_{22}|+|II_{25}|\le \frac14{\mathcal I} + C\|m\|^{1/2}_{\mathcal{C}}{\mathcal I} \\\nonumber
&+\,C\|m\|_{\mathcal{C}}\int_{B_{2r}}\left[\tilde{N}^{r}_a(v)\right]^2\,dx' . 
\end{align}
It follows using \eqref{square01}
\begin{align}\label{square01-alt1}
\left(\frac12-C\|m\|^{1/2}_{\mathcal{C}}\right)\mathcal{I} &\leq\int_{B_{2r}}\partial_{0}v_{\beta}(r,x')v_{\beta}(r,x')\,r\,\zeta\,dx' +\int_{B_{2r}}\varepsilon_{0j}^{\alpha\beta}\partial_jv_\beta v_\alpha\,r\,\zeta\,dx'
\nonumber\\[4pt]
&\quad-\frac{1}{2}\int_{B_{2r}}\sum_\beta v_{\beta}(r,x')^{2}\zeta\,dx'
+\frac{1}{2}\int_{B_{2r}}\sum_\beta v_{\beta}(0,x')^{2}\zeta\,dx' 
\nonumber\\[4pt]
&\quad +C(\lambda,\Lambda,n,N)(\|\mu\|_{\mathcal{C}}+\|m\|_{\mathcal{C}})\int_{B_{2r}}\left[\tilde{N}^{r}_a(v)\right]^2\,dx' 
+II_{23}+IV,
\end{align}
Clearly, we shall require $\frac12-C\|m\|^{1/2}_{\mathcal{C}}>0$ so that righthand side is positive.

Consider now a sequence of disjoint boundary balls 
$(B_r(y'_k))_{k\in\mathbb N}$ such that $\cup_{k}B_{2r}(y'_k) $ covers $\partial\Omega={\BBR}^{n-1}$ with finite overlap
and consider a partition of unity $(\zeta_{k})_{k\in\mathbb N}$ subordinate to this cover. That is, 
assume $\sum_k \zeta_{k} = 1$ on ${\BBR}^{n-1}$ and each $\zeta_{k}$ is supported in $B_{2r}(y'_k)$. 
Consider \eqref{square01-alt1} on each such ball and we sum over all $k$.  Given that $\sum_k \partial_i\zeta_{k} = 0$ for each $i$, by summing 
\eqref{square01-alt1} over all $k$'s gives $\sum_{k} II_{23}+IV= 0$. It follows that for $\|m\|_{\mathcal C}$ small we have
\begin{align}\label{square02}
&\hskip -0.20in 
C(\lambda)\iint_{[0,r]\times{\BBR}^{n-1}}|\nabla v|^2\,x_0\,dx'\,dx_0 
\nonumber\\[4pt]
&\hskip 0.20in
\leq\int_{{\BBR}^{n-1}}\partial_{0}v_{\beta}(r,x')v_{\beta}(r,x')\,r\,dx' +\int_{B_{2r}}\varepsilon_{0j}^{\alpha\beta}\partial_jv_\beta v_\alpha\,r\,\zeta\,dx'
\nonumber\\[4pt]
&
\quad -\frac{1}{2}\int_{{\BBR}^{n-1}}\sum_\beta v_{\beta}(r,x')^{2}\,dx'
+\frac{1}{2}\int_{{\BBR}^{n-1}}\sum_\beta v_{\beta}(0,x')^{2}\,dx' 
\nonumber\\[4pt]
&\quad +C(\|\mu\|_{\mathcal{C}}+\|m\|_{\mathcal C})\int_{{\BBR}^{n-1}}\left[\tilde{N}^{r}_a(v)\right]^2\,dx'.
\end{align}
We integrate \eqref{square02} in $r$ over 
$[0,r']$ and then divide by $r'$. This gives (after relabelling $r'$ back to $r$)
\begin{align}\label{square02-alt1}
&\hskip -0.20in 
C(\lambda)\iint_{[0,r]\times{\BBR}^{n-1}}|\nabla v|^2\,x_0(1-\textstyle\frac{t}r)\,dx'\,dx_0 \displaystyle
+\frac2r\iint_{[0,r]\times \partial\Omega}|v|^2dx'\,dx_0
\nonumber\\[4pt]
&\hskip 0.20in
\leq \int_{{\BBR}^{n-1}}|v|^2(r,x')\,dx'
+\int_{{\BBR}^{n-1}}|v|^2(0,x')\,dx' 
\nonumber\\[4pt]
&\qquad +C(\|\mu\|_{\mathcal{C}}+\|m\|_{\mathcal C})\int_{{\BBR}^{n-1}}\left[\tilde{N}^{r}_a(v)\right]^2\,dx'.
\end{align}
We note that the second term on the righthand side of \eqref{square02} was estimated using the Cauchy-Schwarz and the interior Cacciopoli inequality.

We use \eqref{square01-alt1} for another local estimate. Instead of summing over different balls $B_r$ covering $\mathbb R^{n-1}$ we estimate the terms $II_{23}$ and $IV$. Both of these terms are of the same type and can be bounded (up to a constant) by
\begin{equation}\label{eq5.16}
\iint_{[0,r]\times B_{2r}}|\nabla v||v|x_0|\partial_T\zeta|dx'dx_0,
\end{equation}
where $\partial_T\zeta$ denotes any of the derivatives in the direction parallel to the boundary. Recall that $\zeta$ is a smooth cutoff function equal to $1$ on $B_r$ and $0$ outside $B_{2r}$. In particular, we may assume $\zeta$ to be of the form $\zeta=\eta^2$ for another smooth function $\eta$ such that $|\nabla_T\eta|\le C/r$. By Cauchy-Schwartz \eqref{eq5.16} can be further estimated by
\begin{equation}\label{eq5.17}
\left(\iint_{[0,r]\times B_{2r}}|\nabla v|^2x_0(\eta)^2dx'dx_0\right)^{1/2}\left(\iint_{[0,r]\times B_{2r}}|v|^{2}x_0|\nabla_T\eta|^2dx'dx_0\right)^{1/2}
\end{equation}
\begin{equation}
\lesssim{\mathcal I}^{1/2}\left(\frac1r\iint_{[0,r]\times B_{2r}}|v|^2dx'dx_0\right)^{1/2}\le \varepsilon{\mathcal I}+C_\varepsilon\int_{B_{2r}}\left[\tilde{N}^r_{a}(v)\right]^{2}\,dx'.\nonumber
\end{equation}
In the last step we have used AG-inequality and a trivial estimate of the solid integral $|v|^2$ by the averaged nontangential maximal function. Substituting \eqref{eq5.17} into \eqref{square01-alt1} and again integrating in $r$ over $[0,r']$ and dividing by $r'$ exactly as above finally yields the following lemma.

\begin{lemma}\label{S3:L4-alt1}
Let $\Omega=\BBR^n_+$ and assume that 
$A$ is strongly elliptic, satisfies $A_{0j}^{\alpha\beta}=\delta_{\alpha\beta}\delta_{0j}$,
and the measure $\mu$ defined as in \eqref{Car_hatAAxxx} is Carleson. Let ${\mathcal L}_1$ be another operator such that
\begin{equation}\label{ES-new2a}
\mathcal{L}_1u=\left[ \partial_{i} \left(\bar{A}_{ij}^{\alpha \beta}(x) \partial_{j} u_{\beta}\right)
+\bar{B}_{i}^{\alpha \beta}(x) \partial_{i}u_{\beta}\right]_{\alpha}
\end{equation}
where
\begin{equation}\label{Car-m2}
dm(x)=\left[\left(\sup_{B_{\delta(x)/2}(x)}|A-\bar{A}|\right)^{2}\delta^{-1}(x)
+\left(\sup_{B_{\delta(x)/2}(x)}|B-\bar{B}|\right)^{2}\delta(x)\right] \,dx
\end{equation}
is a Carleson measure with small Carleson norm $\|m\|_{\mathcal C}$.

Then there exists a constant $C=C(n,N,\lambda,\Lambda)$ such that for all $r>0$ and any energy solution of ${\mathcal L}_1u=0$ we have
\begin{align}\label{S3:L4:E00}
&
C(\lambda)\iint_{[0,r/2]\times\partial\Omega}|\nabla u|^{2}x_0\,dx'\,d x_0 
+\frac{2}{r}\iint_{[0,r]\times\partial \Omega} |u(x_0,x')|^{2}\,dx'\,dx_0 
\nonumber\\[4pt]
&
\leq\int_{\partial\Omega}|u(0,x')|^{2}\,dx' 
+\int_{\partial\Omega}|u(r,x')|^{2}\,dx'
+C(\|\mu\|_{\mathcal{C}}+\|m\|_{\mathcal{C}})\int_{\partial\Omega}\left[\tilde{N}^{r}_a(u)\right]^{2}\,dx'.
\end{align}
Under the same assumptions we also have for any $r>0$
\begin{eqnarray}\label{eq5.15-alt1}
&&\iint_{[0,r/2]\times B_r}|\nabla u|^{2}x_0\,dx'\,d x_0  \\\nonumber
&\leq&C\Big[\int_{B_{2r}}|u(0,x')|^{2}\,dx' 
+\int_{B_{2r}}|u(r,x')|^{2}\,dx'\\\nonumber
&&+(\|\mu\|_{\mathcal{C}}+\|m\|_{\mathcal{C}})\int_{B_{2r}}\left[\tilde{N}^r_{a}(u)\right]^{2}\,dx'\Big]\\\nonumber
&\leq& C(2+\|\mu\|_{\mathcal{C}}+\|m\|_{\mathcal{C}})\int_{B_{2r}}\left[\tilde{N}^{2r}_{a}(u)\right]^{2}\,dx'.
\end{eqnarray}
\end{lemma}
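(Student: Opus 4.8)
The plan is to extract Lemma~\ref{S3:L4-alt1} directly from the computation already carried out in~\eqref{square01-alt1}, following the two branches already indicated in the text. For the first estimate~\eqref{S3:L4:E00}, I would start from~\eqref{square01-alt1} applied with $v=u$ and a partition of unity $(\zeta_k)$ subordinate to a finitely-overlapping cover of $\partial\Omega={\BBR}^{n-1}$ by balls $B_r(y'_k)$; summing over $k$ kills the cutoff-derivative terms $II_{23}$ and $IV$ (since $\sum_k\partial_i\zeta_k=0$) and produces~\eqref{square02}. The term $\int\varepsilon_{0j}^{\alpha\beta}\partial_j v_\beta\, v_\alpha\, r$ on the slice $\{x_0=r\}$ is handled by Cauchy--Schwarz together with $|\varepsilon_{0j}^{\alpha\beta}|\lesssim\|m\|_{\mathcal C}^{1/2}$ and the interior Caccioppoli inequality (Proposition~\ref{caccio}) to trade $\partial_j v$ on the slice for $|u|$ on a slightly thicker slab, exactly as for the first term $\int\partial_0 v_\beta\, v_\beta\, r$; likewise the $-\frac12\int|v|^2(r,x')$ term is discarded for a lower bound and $\frac12\int|v|^2(0,x')$ is kept. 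Finally, integrating~\eqref{square02} in $r$ over $[0,r']$ and dividing by $r'$ converts the pointwise-in-$r$ slice terms into the averaged quantities appearing in~\eqref{square02-alt1}, and relabelling $r'\to r$ (noting $\Gamma^{r/2}$-truncation of the gradient integral on the left since $x_0(1-t/r)\ge \tfrac12 x_0$ for $t\le r/2$) yields~\eqref{S3:L4:E00} after absorbing the $C\|m\|^{1/2}_{\mathcal C}\mathcal I$ term into the left-hand side, which is legitimate precisely because $\|m\|_{\mathcal C}$ is small so that $\tfrac12-C\|m\|^{1/2}_{\mathcal C}>0$.

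For the local estimate~\eqref{eq5.15-alt1}, I would instead \emph{not} sum over the cover but keep a single cutoff $\zeta=\eta^2$ with $\eta\equiv1$ on $B_r$, $\operatorname{supp}\eta\subset B_{2r}$, $|\nabla_T\eta|\le C/r$, and estimate the leftover terms $II_{23}$ and $IV$ directly. As displayed in~\eqref{eq5.16}--\eqref{eq5.17}, each is bounded by $\iint |\nabla v||v|\,x_0|\nabla_T\zeta|$, and Cauchy--Schwarz splits this into $\mathcal I^{1/2}$ times $\big(\tfrac1r\iint_{[0,r]\times B_{2r}}|v|^2\big)^{1/2}$, the latter being controlled by $\int_{B_{2r}}[\tilde N^r_a(v)]^2\,dx'$; AG-inequality with a small parameter $\varepsilon$ then absorbs the $\varepsilon\mathcal I$ piece into the left-hand side. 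Substituting this back into~\eqref{square01-alt1}, integrating in $r$ over $[0,r']$, and dividing by $r'$ gives the first inequality in~\eqref{eq5.15-alt1}; the last line then follows from the trivial bound $\int_{B_{2r}}|u(0,x')|^2+|u(r,x')|^2\,dx'\le C\int_{B_{2r}}[\tilde N^{2r}_a(u)]^2\,dx'$ (the values of $u$ on the slices $\{x_0=0\}$ and $\{x_0=r\}$ over $B_{2r}$ lie in cones of aperture $a$ with vertices on $\Delta_{2r}$, up to adjusting $r$), absorbing $2$ into the constant.

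I expect the main technical obstacle to be the careful bookkeeping of the two boundary-slice terms at height $x_0=r$ — namely $\int_{{\BBR}^{n-1}}\partial_0 v_\beta(r,x')v_\beta(r,x')\,r\,dx'$ and $\int\varepsilon_{0j}^{\alpha\beta}\partial_j v_\beta v_\alpha\,r$ — because in the systems setting $v$ (which may be a derivative $w^k=\partial_k u$) is only in $W^{1,2}_{\rm loc}$ and a naive trace on the slice is not controlled; the fix is to first integrate in $r$ over $[0,r']$ so that these become solid integrals over a thin slab and then apply Caccioppoli to replace $\nabla v$ by $v$, but one has to check that this averaging is compatible with keeping the $\frac1r\iint|v|^2$ gain on the left of~\eqref{square02-alt1} and that the resulting truncation of the gradient cone to height $r/2$ is harmless. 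The remaining estimates are routine applications of Cauchy--Schwarz, Theorem~\ref{T:Car}, Proposition~\ref{caccio}, and the AG-inequality, with all absorptions justified by the smallness of $\|m\|_{\mathcal C}$ (and, implicitly, $\|\mu\|_{\mathcal C}$).
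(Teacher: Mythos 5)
Your proposal follows exactly the paper's own route: both parts of the lemma are read off from \eqref{square01-alt1}, the global estimate \eqref{S3:L4:E00} by summing over the partition of unity (which kills $II_{23}+IV$) and then averaging in $r$ to reach \eqref{square02-alt1}, and the local estimate \eqref{eq5.15-alt1} by instead bounding $II_{23}$ and $IV$ directly as in \eqref{eq5.16}--\eqref{eq5.17} and absorbing the $\varepsilon\mathcal I$ piece. One small bookkeeping correction: in the paper only the $\varepsilon_{0j}$-slice term is treated by Cauchy--Schwarz plus interior Caccioppoli, whereas $\int\partial_0 v_\beta(r,x')v_\beta(r,x')\,r\,dx'$ is integrated exactly (it equals $\tfrac{r}{2}\partial_0\int|v|^2$) and the $-\tfrac12\int|v(r,\cdot)|^2$ term is retained rather than discarded --- these two together are what produce the $\int|u(r,x')|^2$ term on the right and the $\tfrac{2}{r}\iint|u|^2$ gain on the left of \eqref{S3:L4:E00}, so handling the first slice term by Cauchy--Schwarz would instead leave an $O(1)\int[\tilde N^r_a(u)]^2$ term that could not be absorbed in the application.
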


\begin{corollary}\label{S4:C1-alt1} 
We retain the assumptions of Lemma~\ref{S3:L4-alt1}. Given any weak solution $u$ of ${\mathcal L}_1u=0$,
we have the estimate
\begin{equation}\label{Eqqq-3-alt1}
\|S_a(u)\|_{L^2(\partial\Omega)}\leq C\|\tilde{N}_a(u)\|_{L^2(\partial \Omega)}.
\end{equation}
\end{corollary}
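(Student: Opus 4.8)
The plan is to derive the square function bound \eqref{Eqqq-3-alt1} directly from the two local estimates supplied by Lemma~\ref{S3:L4-alt1}, using a standard covering and limiting argument. First I would note that the left-hand side of \eqref{S3:L4:E00}, after letting $r\to\infty$, controls $\iint_{\Omega}|\nabla u|^2 x_0\,dx'\,dx_0$, which by the Fubini identity \eqref{SSS-1} is comparable to $\|S_a(u)\|_{L^2(\partial\Omega)}^2$. So the task reduces to showing that the right-hand side of \eqref{S3:L4:E00} is bounded by $C\|\tilde N_a(u)\|_{L^2(\partial\Omega)}^2$, uniformly in $r$, and that the ``$r$-slice'' boundary terms $\int_{\partial\Omega}|u(r,x')|^2\,dx'$ can be disposed of.

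The key steps, in order, are: \emph{(1)} Control the last term on the right of \eqref{S3:L4:E00}: since $\tilde N_a^r(u)\le \tilde N_a(u)$ pointwise on $\partial\Omega$, this term is trivially at most $C(\|\mu\|_{\mathcal C}+\|m\|_{\mathcal C})\|\tilde N_a(u)\|_{L^2(\partial\Omega)}^2$. \emph{(2)} Control $\int_{\partial\Omega}|u(0,x')|^2\,dx'$: the a.e.\ boundary trace identity \eqref{eq-ae} together with the definition of $\tilde N_a$ shows that the boundary values, understood via the averages $u_{av}$, are dominated by $\tilde N_a(u)$, so $\int_{\partial\Omega}|u(0,x')|^2\,dx'\le C\|\tilde N_a(u)\|_{L^2(\partial\Omega)}^2$ (one passes through $u_{av}$ using that $w(x)$ in \eqref{w} is exactly a ball-average and bounds $|u_{av}|$ by Cauchy--Schwarz). \emph{(3)} Dispose of the slice term $\int_{\partial\Omega}|u(r,x')|^2\,dx'$: here I would use the local estimate \eqref{eq5.15-alt1} (or rather its consequence) to observe that $\liminf_{r\to\infty}\frac1r\int_0^r\int_{\partial\Omega}|u(s,x')|^2\,dx'\,ds$ must be finite — indeed the $\frac2r\iint_{[0,r]\times\partial\Omega}|u|^2$ term on the left of \eqref{S3:L4:E00} is nonnegative and already bounded by the right side — so along a suitable sequence $r_\ell\to\infty$ the slice terms $\int_{\partial\Omega}|u(r_\ell,x')|^2\,dx'$ tend to zero (or at least stay bounded), allowing the gradient integral over all of $\Omega$ to be captured in the limit. \emph{(4)} Combine: pass to the limit $r=r_\ell\to\infty$ in \eqref{S3:L4:E00}, use monotone convergence on the left to get $\iint_\Omega|\nabla u|^2 x_0\,dx\le C\|\tilde N_a(u)\|_{L^2}^2$, and then invoke \eqref{SSS-1} to conclude \eqref{Eqqq-3-alt1}.

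The main obstacle I anticipate is step~(3): justifying that the slice terms $\int_{\partial\Omega}|u(r,x')|^2\,dx'$ do not blow up (and can be made to vanish along a sequence). For a general energy solution $u\in\dot W^{1,2}(\Omega;\mathbb R^N)$ one does not a priori know that $u$ decays at infinity in the $x_0$ direction, so one cannot simply send $r\to\infty$ and drop the term. The resolution is the averaging trick already used in the derivation of \eqref{square02-alt1}: integrate \eqref{S3:L4:E00} in $r$ over $[0,R]$ and divide by $R$, which turns the offending slice term into $\frac1R\int_0^R\int_{\partial\Omega}|u(r,x')|^2\,dx'\,dr$; this in turn is controlled by $C\int_{\partial\Omega}[\tilde N_a^R(u)]^2\,dx'\le C\|\tilde N_a(u)\|_{L^2(\partial\Omega)}^2$ by a crude pointwise bound of the solid average by the nontangential maximal function (exactly as in the passage from \eqref{eq5.16} to \eqref{eq5.17}). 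Letting $R\to\infty$ and using that $\tilde N_a^R(u)\uparrow \tilde N_a(u)$ then yields the uniform bound on the full gradient integral, and \eqref{SSS-1} finishes the proof.
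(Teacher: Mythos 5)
Your argument is correct and is essentially the paper's own proof: the paper disposes of Corollary~\ref{S4:C1-alt1} with the single line ``take $r\to\infty$ in the previous Lemma,'' and the details you supply --- bounding the trace term through the averages $u_{av}$ and $\tilde N_a(u)$, absorbing the slice term $\int_{\partial\Omega}|u(r,x')|^2\,dx'$ by a further averaging in $r$ together with the solid-integral-by-$\tilde N_a$ bound that the paper itself uses at \eqref{eq5.17} and in the last line of \eqref{eq5.15-alt1}, and then invoking \eqref{SSS-1} --- are exactly what that one line leaves implicit. The only blemish is that your first formulation of step (3) is circular (the right-hand side of \eqref{S3:L4:E00} still contains the slice term), but you identify this yourself and your averaging-in-$r$ fix is the correct one.
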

This can be seen by taking $r\to\infty$ in the previous Lemma.\vglue2mm

We now return to \eqref{square01} and consider the second case when $v$ is one of  the vectors $w^k$
given by \eqref{eq-sys-deriv}. The calculation below is similar to \cite{DPR} where the scalar case $N=1$ was established. We proceed again to estimate the term $II_2$ for $k>0$. This gives (using the fact that
$\partial_k {A}_{0j}^{\alpha \beta}=0$):
\begin{align}\label{eq-2-2-alt2}
II_2=& -\iint_{[0,r]\times B_{2r}}\left[\partial_i\left(\partial_k({A}_{ij}^{\alpha \beta})w^j_\beta\right)+\partial_k(B_i^{\alpha\beta})w^i_\beta\right] w^k_\alpha\, x_0\zeta\,dx'\,dx_0\\\nonumber
=&\quad\iint_{[0,r]\times B_{2r}}\partial_k({A}_{ij}^{\alpha \beta})w^j_\beta \partial_iw^k_\alpha\, x_0\zeta\,dx'\,dx_0\\\nonumber
+&\quad\iint_{[0,r]\times B_{2r}}\partial_k({A}_{ij}^{\alpha \beta})w^j_\beta w^k_\alpha\, x_0\partial_i\zeta\,dx'\,dx_0\\\nonumber
+&\quad \iint_{[0,r]\times B_{2r}}B_i^{\alpha\beta}w^i_\beta  w^k_\alpha\, x_0\partial_k\zeta\,dx'\,dx_0\\\nonumber
+&\quad \iint_{[0,r]\times B_{2r}}B_i^{\alpha\beta}\partial_iw^k_\beta w^k_\alpha\, x_0\zeta\,dx'\,dx_0\\\nonumber
+&\quad \iint_{[0,r]\times B_{2r}}B_i^{\alpha\beta}w^i_\beta \partial_k w^k_\alpha\, x_0\zeta\,dx'\,dx_0=II_{21}+II_{22}+II_{23}+II_{24}+II_{25}.
\end{align}
Terms $II_{21}$, $II_{24}$ and $II_{25}$ have estimates similar to \eqref{TWO-TWO}, that is
$$|II_{21}|+|II_{24}|+|II_{25}|\le C(\lambda,\Lambda,N)\left(\|\mu\|_{\mathcal{C}}\int_{B_{2r}} 
\left[\tilde{N}^r_a(w)\right]^{2}\,dx'\right)^{1/2}\cdot\mathcal{I}^{1/2}. $$
It follows we have an analogue of \eqref{square01-alt1}, namely
\begin{align}\label{square01-alt2}
\frac14\mathcal{I} &\leq\int_{B_{2r}}\partial_{0}w^k_{\beta}(r,x')w^k_{\beta}(r,x')\,r\,\zeta\,dx' \nonumber\\[4pt]
&\quad-\frac{1}{2}\int_{B_{2r}}\sum_\beta w^k_{\beta}(r,x')^{2}\zeta\,dx'
+\frac{1}{2}\int_{B_{2r}}\sum_\beta w^k_{\beta}(0,x')^{2}\zeta\,dx' 
\nonumber\\[4pt]
&\quad +C(\lambda,\Lambda,n,N)\|\mu\|_{\mathcal{C}}\int_{B_{2r}}\left[\tilde{N}^{r}_a(\nabla u)\right]^2\,dx' 
+II_{22}+II_{23}+IV,
\end{align}
where $w^k=\partial_k u$ for $k=1,2,\dots,n-1$. As before we then integrate over $[0,r']$ and dive by by $r'$. See the resulting lemma below.

We also want to establish a local result that follows from \eqref{square01-alt2}. Term $IV$ has the same estimate as before. Both terms $II_{22}$ and $II_{23}$ are of the same type and we use $|\nabla A|,\,|B|\lesssim\|\mu\|^{1/2}_{\mathcal C}/x_0$. The remainder has a trivial estimate by $\int_{B_{2r}}\left[\tilde{N}^{r}_a(\nabla u)\right]^2\,dx'$.  We get the following.

\begin{lemma}\label{S3:L4-alt2}
Let $\Omega=\BBR^n_+$ and let 
\begin{equation}\label{ES-old2a}
\mathcal{L}u=\left[ \partial_{i} \left({A}_{ij}^{\alpha \beta}(x) \partial_{j} u_{\beta}\right)
+{B}_{i}^{\alpha \beta}(x) \partial_{i}u_{\beta}\right]_{\alpha}
\end{equation}
has coefficients $A$ that are strongly elliptic, satisfy $A_{0j}^{\alpha\beta}=\delta_{\alpha\beta}\delta_{0j}$,
and the measure $\mu$ defined as in \eqref{Car_hatAAxxx} is Carleson.

Then there exists a constant $C=C(n,N,\lambda,\Lambda)$ such that for all $r>0$ and any energy solution ${\mathcal L}u=0$ we have
\begin{align}\label{S3:L4:E00-alt2}
&
\frac{\lambda}2\iint_{[0,r/2]\times\partial\Omega}|\nabla^2 u|^{2}x_0\,dx'\,d x_0 
+\frac{2}{r}\iint_{[0,r]\times\partial \Omega} |\nabla_T u(x_0,x')|^{2}\,dx'\,dx_0 
\nonumber\\[4pt]
&
\leq\int_{\partial\Omega}|\nabla_T u(0,x')|^{2}\,dx' 
+\int_{\partial\Omega}|\nabla_T u(r,x')|^{2}\,dx'
+C\|\mu\|_{\mathcal{C}}\int_{\partial\Omega}\left[\tilde{N}^{r}_a(\nabla u)\right]^{2}\,dx'.
\end{align}
Under the same assumptions we also have for any $r>0$
\begin{eqnarray}\label{eq5.15-alt2}
&&\iint_{[0,r/2]\times B_r}|\nabla^2 u|^{2}x_0\,dx'\,d x_0  \\\nonumber
&\leq&C\Big[\int_{B_{2r}}|\nabla_T u(0,x')|^{2}\,dx' 
+\int_{B_{2r}}|\nabla_T u(r,x')|^{2}\,dx'+\|\mu\|_{\mathcal{C}}\int_{B_{2r}}\left[\tilde{N}^r_{a}(\nabla u)\right]^{2}\,dx'\Big]\\\nonumber
&\leq& C(2+\|\mu\|_{\mathcal{C}})\int_{B_{2r}}\left[\tilde{N}^{2r}_{a}(\nabla u)\right]^{2}\,dx'.
\end{eqnarray}
\end{lemma}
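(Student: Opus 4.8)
The plan is to run the same cutoff-and-integrate-by-parts scheme that produced Lemma~\ref{S3:L4-alt1}, but now applied to the derived system \eqref{eq-sys-deriv} for the vectors $w^k=\partial_k u$, $k=1,\dots,n-1$, starting from the inequality \eqref{square01-alt2} already derived above. First I would recall that \eqref{square01-alt2} holds on each boundary ball $B_{2r}(y'_k)$ for a partition-of-unity cutoff $\zeta_k$, and that the only genuinely nonlocal quantities on its right-hand side are $\partial_{0}w^k_\beta(r,x')w^k_\beta(r,x')r\zeta$, the boundary terms at heights $0$ and $r$, the Carleson term, and the residual pieces $II_{22}+II_{23}+IV$. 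As in the passage leading to \eqref{square02}, I would sum \eqref{square01-alt2} over all $k$ in the partition of unity; since $\sum_k\partial_i\zeta_k=0$ for every $i$, the sum $\sum_k(II_{22}+II_{23}+IV)$ telescopes to zero, because each of those three terms carries a factor $\partial_i\zeta$ or $\partial_k\zeta$. This yields the global estimate on the slab $[0,r]\times\BBR^{n-1}$ with $|\nabla^2 u|^2$ on the left (using ellipticity \eqref{EllipA} applied to $v=w^k$, summed over $k$, which controls all of $\nabla^2 u$ except the $\partial_0\partial_0$ block, and that block is recovered from the PDE itself since $A_{00}=I$ and the remaining terms are lower order; this is the one spot worth a sentence of care).

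Next I would handle the boundary term $\int\partial_{0}w^k_\beta(r,x')w^k_\beta(r,x')r\,dx'$ by Cauchy--Schwarz followed by the interior Cacciopoli inequality for the second gradient, Proposition~\ref{Ciac2} (legitimate here since $A$ satisfies the Legendre--Hadamard condition by strong ellipticity and $|\nabla A|,|B|\lesssim M/\delta$ on the relevant balls by the Carleson hypothesis \eqref{Car_hatAAxxx}), converting $\partial_0 w^k$ at height $r$ into an $L^2$ average of $\nabla_T u=\nabla_T$-type quantities over a ball of comparable size, which is in turn dominated by $\int_{\partial\Omega}|\nabla_T u(r,x')|^2 dx'$-type terms plus the square-function term already present; alternatively one absorbs a small multiple of $\mathcal I$. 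After this one integrates the resulting inequality in $r$ over $[0,r']$ and divides by $r'$, exactly as in the derivation of \eqref{square02-alt1} from \eqref{square02}: the $\frac2r$ factor on the $|\nabla_T u|^2$ term on the left-hand side of \eqref{S3:L4:E00-alt2} and the averaging $(1-t/r)$ weight on the $|\nabla^2 u|^2$ term arise precisely from this last integration step, and relabelling $r'$ back to $r$ gives \eqref{S3:L4:E00-alt2}.

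For the local estimate \eqref{eq5.15-alt2} I would not sum over the partition of unity; instead, working on a single ball $B_{2r}$, I estimate the residual terms $II_{22}$, $II_{23}$ (and $IV$) directly. All three are of the type $\iint|\nabla w^k||w^k|x_0|\partial_T\zeta|$ or $\iint|\nabla u||\nabla u|x_0\zeta$ with a factor $|\nabla A|$ or $|B|$; using $|\nabla A|,|B|\lesssim\|\mu\|^{1/2}_{\mathcal C}/x_0$ and Cauchy--Schwarz exactly as in \eqref{eq5.16}--\eqref{eq5.17}, each is bounded by $\varepsilon\mathcal I+C_\varepsilon\int_{B_{2r}}[\tilde N^r_a(\nabla u)]^2\,dx'$. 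Choosing $\varepsilon$ small lets me absorb into the $\frac14\mathcal I$ on the left of \eqref{square01-alt2}, integrate in $r$ over $[0,r']$, divide by $r'$, and obtain the first inequality in \eqref{eq5.15-alt2}; the second inequality follows from the trivial bound $\int_{B_{2r}}|\nabla_T u(h,x')|^2dx'\le C\int_{B_{2r}}[\tilde N^{2r}_a(\nabla u)]^2\,dx'$ for $h\in\{0,r\}$, together with $\tilde N^r_a\le\tilde N^{2r}_a$. The main obstacle I anticipate is purely bookkeeping: keeping the indices on $w^k=\partial_k u$ straight while matching the $II_{ij}$ terms of \eqref{eq-2-2-alt2} to their scalar-case analogues in \cite{DPR}, and in particular verifying that when $B\ne0$ the extra first-order terms $II_{23}$, $II_{24}$, $II_{25}$ really are all controlled by the Carleson bound on $B$ and do not spoil the absorption; there is no new analytic difficulty beyond Proposition~\ref{Ciac2}, which is already available.
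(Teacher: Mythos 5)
Your proposal follows the paper's proof essentially verbatim: summing \eqref{square01-alt2} over the partition of unity and over $k=1,\dots,n-1$, recovering the missing $\partial_0^2u$ block from the PDE via $A_{00}^{\alpha\beta}=I_{N\times N}$ together with the Carleson bound on $|\nabla A|,|B|$, integrating in $r$, and estimating $II_{22}$, $II_{23}$, $IV$ directly for the local version are exactly the steps the paper takes. The only (harmless) wrinkle is your suggestion to treat the boundary term $\int\partial_0w^k_\beta(r,x')w^k_\beta(r,x')\,r\,dx'$ by Cauchy--Schwarz and Proposition \ref{Ciac2}; the paper instead absorbs it exactly through the integration in $r$, which is the same mechanism you (correctly) invoke to produce the $\tfrac{2}{r}$ term on the left of \eqref{S3:L4:E00-alt2}.
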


\begin{proof} After summing \eqref{square01-alt2} over all partitions of unity, over $k=1,2,\dots n-1$ and integrating in $r$ we only obtain control of 
$$\iint_{[0,r/2]\times B_r}|\nabla(\nabla_T u)|^{2}x_0\,dx'\,d x_0$$
on the lefthand side of \eqref{S3:L4:E00-alt2} and \eqref{eq5.15-alt2} as we have omitted the $w^0$ term. But clearly,
$$|\nabla \partial_0 u|\lesssim |\nabla \nabla_T u|+|\partial^2_{0} u|,$$
and therefore it only remains to establish a bound for
$$\iint_{[0,r/2]\times B_r}|\partial^2_0 u|^{2}x_0\,dx'\,d x_0.$$
We do it using the PDE for $u$. Since ${\mathcal L}u=0$ and $A_{00}^{\alpha\beta}=I_{N\times N}$ we have
$$\partial^2_0 u_\alpha=-\sum_{(i,j)\ne (0,0)}A_{ij}^{\alpha\beta}\partial_i\partial_ju_\beta-\partial_i(A_{ij}^{\alpha\beta})w^j_\beta-B_i^{\alpha\beta}w^i_\beta.$$
It follows
\begin{align}
\iint_{[0,r/2]\times B_r}|\partial^2_0 u|^{2}x_0\,dx'\,d x_0&\lesssim \iint_{[0,r/2]\times B_r}|\nabla\nabla_T u|^{2}x_0\,dx'\,d x_0\\&+\iint_{[0,r/2]\times B_r}|\nabla A|^2|\nabla u|^2x_0x_0\,dx'\,d x_0.
\nonumber
\end{align}
The last term can be estimated using the Carleson condition by
$$\|\mu\|_{\mathcal{C}}\int_{B_{2r}}\left[\tilde{N}^r_{a}(\nabla u)\right]^{2}\,dx'.$$
From this both estimates follow. After taking limit $r\to\infty$ we get:
\end{proof}

\begin{corollary}\label{S4:C1-alt2} 
We retain the assumptions of Lemma~\ref{S3:L4-alt2}. Given any weak solution $u$ of ${\mathcal L}u=0$,
we have the estimate
\begin{equation}\label{Eqqq-3-alt2}
\|S_a(\nabla u)\|_{L^2(\partial\Omega)}\leq C\|\tilde{N}_a(\nabla u)\|_{L^2(\partial \Omega)}.
\end{equation}
\end{corollary}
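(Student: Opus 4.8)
The plan is to obtain \eqref{Eqqq-3-alt2} by letting $r\to\infty$ in the second estimate \eqref{eq5.15-alt2} of Lemma~\ref{S3:L4-alt2}. If $\|\tilde N_a(\nabla u)\|_{L^2(\partial\Omega)}=\infty$ the inequality is vacuous, so we may assume it is finite.

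First I would fix the center of the balls in \eqref{eq5.15-alt2} at the origin of ${\BBR}^{n-1}$, so that on the left the regions $[0,r/2]\times B_r$ form an increasing family exhausting $\Omega={\BBR}^n_+$ as $r\uparrow\infty$, while on the right the balls $B_{2r}$ increase to ${\BBR}^{n-1}$ and the truncated maximal functions $\tilde N^{2r}_a(\nabla u)$ increase pointwise to $\tilde N_a(\nabla u)$. Since all the integrands involved are nonnegative, the monotone convergence theorem applies on both sides and yields
\[
\int_\Omega|\nabla^2 u(x)|^2\,x_0\,dx\ \le\ C\bigl(2+\|\mu\|_{\mathcal C}\bigr)\,\|\tilde N_a(\nabla u)\|^2_{L^2(\partial\Omega)}.
\]
Under the hypotheses of Lemma~\ref{S3:L4-alt2} one has $\nabla u\in W^{1,2}_{\rm loc}(\Omega;{\BBR}^{nN})$ for a solution $u$ (the regularity already implicit in the left-hand side of \eqref{eq5.15-alt2}, and which also follows from Proposition~\ref{Ciac2}), so the Fubini identity behind \eqref{SSS-1} applies verbatim with $\nabla u$ in place of $u$ and gives $\|S_a(\nabla u)\|^2_{L^2(\partial\Omega)}\approx\int_\Omega|\nabla^2 u|^2\,\delta(x)\,dx$. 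Because $\delta(x)=x_0$ on ${\BBR}^n_+$, combining this with the displayed inequality produces \eqref{Eqqq-3-alt2}.

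I do not expect a genuine obstacle here: the entire argument is the twofold passage to the limit, which is legitimate precisely because \eqref{eq5.15-alt2} is expressed in terms of nonnegative quantities and monotone truncation parameters. The one point worth flagging is that it is more convenient to pass to the limit in \eqref{eq5.15-alt2} than in \eqref{S3:L4:E00-alt2}, since the latter carries the boundary terms $\int_{\partial\Omega}|\nabla_T u(r,x')|^2\,dx'$ and $\int_{\partial\Omega}|\nabla_T u(0,x')|^2\,dx'$: to use it one would first select a sequence $r_k\to\infty$ along which $\int_{\partial\Omega}|\nabla u(r_k,x')|^2\,dx'\to0$ (available since $u\in\dot W^{1,2}(\Omega)$, so $r\mapsto\int_{\partial\Omega}|\nabla u(r,x')|^2\,dx'$ is integrable on $(0,\infty)$) and then separately control $\int_{\partial\Omega}|\nabla_T u(0,x')|^2\,dx'$ by $\|\tilde N_a(\nabla u)\|^2_{L^2(\partial\Omega)}$.
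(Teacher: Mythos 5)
Your proposal is correct and is essentially the paper's own argument: the paper disposes of this corollary with the single remark that one takes $r\to\infty$ in Lemma~\ref{S3:L4-alt2}, which is exactly the limiting procedure you carry out (your choice to pass to the limit in the local estimate \eqref{eq5.15-alt2} rather than \eqref{S3:L4:E00-alt2}, together with the Fubini identity \eqref{SSS-1} applied to $\nabla u$, is the natural way to make that remark precise).
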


\section{Bounds for the nontangential maximal function by the square function}
\label{SS:43}

Our aim in this section is to establish a reverse versions of the inequalities \eqref{Eqqq-3-alt1} and \eqref{Eqqq-3-alt2}. Like the previous section we shall assume that $\Omega={\mathbb R}^{n}_+$. 
 
The approach necessarily differs from the usual argument in the scalar elliptic case due to the fact 
that certain estimates, such as interior H\"older regularity of a weak solution, are unavailable for 
the class of systems presently considered. Hence, alternative arguments bypassing such difficulties 
must be devised. This has been done in \cite{DHM} and it will allow us to shorten the arguments here substantially as we can use many results from \cite{DHM} without significant changes. Where we diverge substantially from the approach given in \cite{DHM} we present a complete alternative argument.

Again the initial part of our approach is generic and applies to both cases we consider in this paper. Assume therefore that $v\in W^{1,2}_{loc}(\Omega;\mathbb R^N)$ with some decay as $x_0\to\infty$
and let $w$ be the $L^2$ average of $v$ that is
\begin{equation}\label{ww}
w(x):=\left(\dint_{B_{\delta(x)/2}(x)}|v|^{2}(z)\,dz\right)^{1/2}.
\end{equation}

For the function $w$ defined in $\Omega$ as in \eqref{w}, and a constant $\nu>0$, 
define the set
\begin{equation}\label{E}
E_{\nu,a}:=\big\{x'\in\partial\Omega:\,N_{a}(w)(x')>\nu\big\}
\end{equation}
(where, as usual, $a>0$ is a fixed background parameter), 
and consider the map $h:\partial\Omega\to\BBR$ given at each $x'\in\partial\Omega$ by 
\begin{equation}\label{h}
h_{\nu,a}(w)(x'):=\inf\left\{x_0>\phi(x'):\,\sup_{z\in\Gamma_{a}(x_0,x')}w(z)<\nu\right\}
\end{equation}
with the convention that $\inf\varnothing=\infty$. 
At this point it is not clear whether $h_{\nu,a}(w,x')<\infty$ for all points $x'\in\partial\Omega$, which is why we have assumed some decay of $v$ as $x_0\to\infty$. Both cases we consider will satisfy this. A shown in \cite{DHM} we have the following.

\begin{lemma}\label{S3:L5} Fix two positive numbers $\nu,a$. Then the following holds.
\vglue2mm

\noindent (i)
The function $h_{\nu, a}(w)$ is Lipschitz, with a Lipschitz constant $1/a$. That is,
\begin{equation}\label{Eqqq-5}
\left|h_{\nu,a}(w)(x')-h_{\nu,a}(w)(y')\right|\leq a^{-1}|x'-y'|
\end{equation}
for all $x',y'\in\partial\Omega$.

\vglue2mm

\noindent (ii)
Given an arbitrary $x'\in E_{\nu,a}$, let $x_0:=h_{\nu,a}(w)(x')$. Then there exists a 
point $y=(y_0,y')\in\partial\Gamma_{a}(x_0,x')$ such that $w(y)=\nu$ and $h_{\nu,a}(w)(y')=y_0$. 		
\end{lemma}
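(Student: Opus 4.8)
The plan is to prove the two parts of Lemma~\ref{S3:L5} essentially as in \cite{DHM}, since both are soft, topological/geometric facts about the sublevel sets of the nontangential sup of a \emph{continuous} function $w$; recall that $w$ in \eqref{ww} is continuous because it is an $L^2$-average of $v\in W^{1,2}_{\mathrm{loc}}$, and that the decay hypothesis on $v$ as $x_0\to\infty$ guarantees $h_{\nu,a}(w)(x')<\infty$ for every $x'\in\partial\Omega$ in the two situations we apply this to. I will keep the argument short and refer to \cite{DHM} for routine verifications.

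For part (i): the key observation is that the cones satisfy a containment property under horizontal translation. Precisely, if $|x'-y'|\le t$ and $s:=x_0+a^{-1}t$, then $\Gamma_a(s,y')\subseteq\Gamma_a(x_0,x')$; this is immediate from the defining inequality \eqref{TFC-6}, since a point $z=(z_0,z')\in\Gamma_a(s,y')$ satisfies $a(z_0-s)>|z'-y'|$, and then $a(z_0-x_0)=a(z_0-s)+t>|z'-y'|+|x'-y'|\ge|z'-x'|$. Consequently, if $\sup_{\Gamma_a(x_0,x')}w<\nu$ then also $\sup_{\Gamma_a(s,y')}w<\nu$, which by definition \eqref{h} gives $h_{\nu,a}(w)(y')\le s=h_{\nu,a}(w)(x')+a^{-1}|x'-y'|$ once we take $x_0$ to be (a value approaching) $h_{\nu,a}(w)(x')$ from above. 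Swapping the roles of $x'$ and $y'$ yields the two-sided bound \eqref{Eqqq-5}. A small technical point is that the infimum in \eqref{h} may not be attained, so I will phrase this with an $\varepsilon$ of room, or first note by continuity of $w$ that the set $\{x_0:\sup_{\Gamma_a(x_0,x')}w<\nu\}$ is open and is an interval $(h_{\nu,a}(w)(x'),\infty)$, so that for $x_0>h_{\nu,a}(w)(x')$ we do have $\sup_{\Gamma_a(x_0,x')}w<\nu$; that monotonicity in $x_0$ follows from $\Gamma_a(x_0',x')\subseteq\Gamma_a(x_0,x')$ when $x_0'>x_0$.

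For part (ii): fix $x'\in E_{\nu,a}$ and set $x_0:=h_{\nu,a}(w)(x')$, which is finite and positive (it is $>\phi(x')$ since $N_a(w)(x')>\nu$ forces $w$ to exceed $\nu$ somewhere arbitrarily close to the vertex region, so the open full cone from $(\phi(x'),x')$ cannot have sup below $\nu$; I will spell this out). By the interval description from part (i), $\sup_{\Gamma_a(x_0,x')}w\le\nu$ (take a decreasing sequence $x_0^{(m)}\downarrow x_0$, use $\Gamma_a(x_0,x')=\bigcup_m\Gamma_a(x_0^{(m)},x')$ and continuity of $w$), while for every $x_0'<x_0$ we have $\sup_{\Gamma_a(x_0',x')}w\ge\nu$. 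From the latter, and since $\Gamma_a(x_0',x')\setminus\overline{\Gamma_a(x_0,x')}$ shrinks to $\partial\Gamma_a(x_0,x')$ as $x_0'\uparrow x_0$, a compactness/continuity argument produces a point $y\in\partial\Gamma_a(x_0,x')$ with $w(y)\ge\nu$; combined with $\sup_{\overline{\Gamma_a(x_0,x')}}w\le\nu$ (again by continuity) this gives $w(y)=\nu$. Finally, to see $h_{\nu,a}(w)(y')=y_0$: since $y\in\partial\Gamma_a(x_0,x')$ we have $a(y_0-x_0)=|y'-x'|$, so the Lipschitz bound from (i) is an equality for the pair $(x',y')$, i.e. $h_{\nu,a}(w)(y')\le h_{\nu,a}(w)(x')+a^{-1}|x'-y'|=x_0+(y_0-x_0)=y_0$; and the reverse inequality $h_{\nu,a}(w)(y')\ge y_0$ holds because $w(y)=\nu$ with $y$ on the boundary of the cone $\Gamma_a(x_0',y')$ for every $x_0'<y_0$ (one checks $y\in\overline{\Gamma_a(x_0',y')}$, so $\sup_{\Gamma_a(x_0',y')}w\ge\nu$ by continuity), preventing $h_{\nu,a}(w)(y')$ from being less than $y_0$. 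I expect the main obstacle to be purely bookkeeping: handling the non-attainment of the infimum and the boundary-of-cone limits cleanly, i.e. making the ``$\partial\Gamma_a$'' statements rigorous via continuity of $w$ and the nested-cone structure, rather than any substantive difficulty.
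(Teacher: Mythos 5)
Your proof is correct and follows the standard stopping-time argument: the cone-translation containment $\Gamma_a(x_0+a^{-1}|x'-y'|,y')\subseteq\Gamma_a(x_0,x')$ for (i), and the monotone exhaustion of $\Gamma_a(x_0,x')$ by the cones $\Gamma_a(x_0'',x')$, $x_0''>x_0$, plus continuity of $w$ and the decay hypothesis (for compactness) for (ii). The paper itself gives no proof, simply citing \cite{DHM}, and your reconstruction is exactly the argument used there, so there is nothing to flag beyond the bookkeeping you already identify.
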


\begin{lemma}\label{l6} Let $v$ and $w$ are as before. 
For any $a>0$ there exists $b=b(a)>0$ and $\gamma=\gamma(a)>0$ such that the following holds. 
Having fixed an arbitrary $\nu>0$, for each point $x'$ from the set 
\begin{equation}\label{Eqqq-17}
\big\{x':\,N_{a}(w)(x')>\nu\mbox{ and }S_{b}(v)(x')\leq\gamma\nu\big\}
\end{equation}
there exists a boundary ball $R$ with $x'\in 2R$ and such that
\begin{equation}\label{Eqqq-18}
\big|w\big(h_{\nu,a}(w)(z'),z'\big)\big|>\nu/{2}\,\,\text{ for all }\,\,z'\in R.
\end{equation}
\end{lemma}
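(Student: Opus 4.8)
The plan is to exploit the geometry established in Lemma~\ref{S3:L5}. Fix $x'$ in the set \eqref{Eqqq-17}, set $x_0:=h_{\nu,a}(w)(x')$, and let $y=(y_0,y')\in\partial\Gamma_a(x_0,x')$ be the point furnished by Lemma~\ref{S3:L5}(ii), so that $w(y)=\nu$ and $h_{\nu,a}(w)(y')=y_0$. Because $y\in\partial\Gamma_a(x_0,x')$ we have $\delta(y)\approx y_0\approx a^{-1}|x'-y'|$ up to constants depending only on $a$ and $L$; in particular $y$ sits at a definite height above the boundary. The idea is that $w$, being essentially an $L^2$ average of the weak solution $v$, cannot drop from $\nu$ to $\nu/2$ over a ball of radius comparable to $\delta(y)$ unless $S_b(v)$ is large at nearby boundary points — and the smallness hypothesis $S_b(v)(x')\le\gamma\nu$ forbids exactly that.

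First I would make the oscillation estimate precise. For two interior points $z_1,z_2$ with $|z_1-z_2|\le c\,\delta(z_1)$ one controls $|w(z_1)-w(z_2)|$ by an integral of $|\nabla v|$ over a Whitney-type chain joining $z_1$ to $z_2$; using Cauchy--Schwarz and the definition \eqref{yrdd} of the square function, this is bounded by $C\,\big(\int_{\Gamma_b(\pi(z_1))}|\nabla v|^2\delta^{2-n}\big)^{1/2}$ for a suitable aperture $b=b(a)$ and a boundary projection $\pi(z_1)$, i.e. by $C\,S_b(v)(\pi(z_1))$. This is the step where $b$ is chosen in terms of $a$: the Whitney chain connecting the relevant points must stay inside a cone of aperture $b$ over every boundary point it shadows. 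This type of estimate is standard once one has \eqref{SSS-1} and the Whitney-ball covering; I would cite or reproduce the version already used in \cite{DHM}.

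Next, let $R$ be the boundary ball centered at $y'$ of radius $c_0\,\delta(y)$ with $c_0=c_0(a)$ chosen small. For $z'\in R$ set $z_0:=h_{\nu,a}(w)(z')$; by the Lipschitz bound \eqref{Eqqq-5}, $|z_0-y_0|\le a^{-1}|z'-y'|\le a^{-1}c_0\delta(y)$, so the point $(z_0,z')$ lies within distance $C c_0\delta(y)$ of $y$ and at height comparable to $\delta(y)$. Applying the oscillation estimate along a Whitney chain from $y$ to $(z_0,z')$ gives
\begin{equation}\label{plan-osc}
\big|w(z_0,z')-w(y)\big|\le C\,S_b(v)(x')\le C\gamma\nu,
\end{equation}
where I have used that the chain is short enough that all its boundary shadows lie in $2R$, which in turn lies in a cone of aperture $b$ over $x'$ — this is where $x'\in 2R$ is arranged and where $b$ and $c_0$ must be coordinated. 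Choosing $\gamma=\gamma(a)$ with $C\gamma<1/2$ yields $|w(h_{\nu,a}(w)(z'),z')|>\nu-\nu/2=\nu/2$ for all $z'\in R$, which is \eqref{Eqqq-18}. Finally one checks $x'\in 2R$: since $|x'-y'|\approx a\,\delta(y)$ and $R$ has radius $\approx c_0\delta(y)$, this holds provided $c_0$ is not too small, so the constants must be picked in the order $b$ first (from the chain geometry), then $c_0$ large enough for $x'\in 2R$ yet small enough that all shadows stay in the aperture-$b$ cone over $x'$, then $\gamma$ to absorb the constant $C$.

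The main obstacle is the bookkeeping in the Whitney-chain oscillation estimate \eqref{plan-osc}: one must verify that a chain of Whitney balls joining $y$ to $(z_0,z')$ can be chosen with controlled length and with all balls having boundary shadows inside a single cone $\Gamma_b(x')$, so that the sum of the local $L^2$-averaged gradient increments telescopes into a single square-function term at $x'$. The absence of pointwise bounds on $v$ means every step genuinely has to go through $L^2$ averages $w$ and Caccioppoli (Proposition~\ref{caccio}), but since \eqref{SSS-1} and the needed covering arguments are already available from \cite{DHM}, the argument is essentially a careful repetition of the scalar case with $N_a(w)$ and $S_b(v)$ in place of $N_a(u)$ and $S_b(u)$.
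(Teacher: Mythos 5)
Your proposal is correct and follows the same route as the paper, which states this lemma without proof and defers to \cite{DHM}: locate the point $y=(y_0,y')$ with $w(y)=\nu$ from Lemma~\ref{S3:L5}(ii), take $R$ centered at $y'$ of radius a suitable multiple of $a\,\delta(y)$, and control the oscillation of $w$ along the graph of $h_{\nu,a}(w)$ over $R$ by a Poincar\'e/Whitney-chain estimate that is absorbed into $S_b(v)(x')$ for an enlarged aperture $b=b(a)$, then choose $\gamma$ small. The only imprecision is the claim $y_0\approx a^{-1}|x'-y'|$: the geometry of $\partial\Gamma_a(x_0,x')$ only gives $|x'-y'|=a(y_0-x_0)\le a\,\delta(y)$ (the point $y$ may sit essentially directly above $x'$), but this one-sided bound is exactly what is needed both for $x'\in 2R$ and for keeping the heights $h_{\nu,a}(w)(z')$, $z'\in R$, comparable to $y_0$, so your ordering of the choices of $b$, $c_0$ and $\gamma$ goes through unchanged.
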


Given a Lipschitz function $h:{\mathbb{R}}^{n-1}\to{\mathbb{R}}$, denote by $M_h$ the 
Hardy-Littlewood maximal function considered on the graph of $h$. That is, 
given any locally integrable function $f$ on the Lipschitz surface 
$\Lambda_h=\{(h(z'),z'):\,z'\in\BBR^{n-1}\}$, define 
$(M_h f)(x):=\sup_{r>0}\dint_{\Lambda_h\cap B_r(x)}|f|\,d\sigma$ for each $x\in\Lambda_h$. 

\begin{corollary}\label{S3:L6} Let $v$, $w$ be as above. For a fixed $a>0$, 
consider $b,\,\gamma$ be as in Lemma~\ref{l6}. Then there exists a finite 
constant $C=C(n)>0$ with the property that for any $\nu>0$ and any point $x'\in E_{\nu,a}$ 
such that $S_{b}(v)(x')\leq\gamma\nu$ one has
\begin{equation}\label{Eqqq-23}
(M_{h_{\nu,a}}w)\big(h_{\nu,a}(x'),x'\big)\geq\,C\nu.
\end{equation}
\end{corollary}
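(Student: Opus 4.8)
The plan is to combine Lemma~\ref{l6} with a comparison between the averaging operator defining $w$ and the Hardy--Littlewood maximal operator $M_{h_{\nu,a}}$ on the Lipschitz graph $\Lambda_{h_{\nu,a}}$. Fix $a>0$ and let $b,\gamma$ be as in Lemma~\ref{l6}. Given $\nu>0$ and $x'\in E_{\nu,a}$ with $S_b(v)(x')\le\gamma\nu$, Lemma~\ref{l6} produces a boundary ball $R$ (of some radius $\rho$, centered at some $z_0'$) with $x'\in 2R$ such that $|w(h_{\nu,a}(w)(z'),z')|>\nu/2$ for every $z'\in R$. Write $h:=h_{\nu,a}(w)$ for brevity and recall from Lemma~\ref{S3:L5}(i) that $h$ is Lipschitz with constant $1/a$, so $\Lambda_h$ is a genuine Lipschitz graph over $\BBR^{n-1}$ and the surface measure $\sigma$ on $\Lambda_h$ is comparable to Lebesgue measure on $\BBR^{n-1}$ under the vertical projection, with constants depending only on $a$ (hence ultimately only on $n$ once $a$ is fixed).

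First I would record that the point $P:=(h(x'),x')$ lies on $\Lambda_h$, and that the "tube" $\{(h(z'),z'):z'\in R\}$ is a piece of $\Lambda_h$ lying within distance $C_a\rho$ of $P$ (since $x'\in 2R$ and $h$ is $1/a$-Lipschitz), so it is contained in $\Lambda_h\cap B_{C_a\rho}(P)$ for a dimensional-after-fixing-$a$ constant $C_a$. Next I would bound $M_h w$ at $P$ from below by testing the maximal function against the ball $B_{C_a\rho}(P)$:
\begin{equation}\label{eq-Mh-lower}
(M_h w)(P)\ \ge\ \dint_{\Lambda_h\cap B_{C_a\rho}(P)} w\,d\sigma\ \ge\ \frac{1}{\sigma(\Lambda_h\cap B_{C_a\rho}(P))}\int_{\{(h(z'),z'):z'\in R\}} w\,d\sigma .
\end{equation}
On the numerator we use the pointwise lower bound $w(h(z'),z')>\nu/2$ from \eqref{Eqqq-18}, giving $\int_{\{\dots\}}w\,d\sigma\ge (\nu/2)\,\sigma(\{(h(z'),z'):z'\in R\})\gtrsim \nu\,\rho^{n-1}$, the last step because the vertical projection of that piece is exactly $R$, a ball of radius $\rho$, and $\sigma\approx$ Lebesgue on $\BBR^{n-1}$ with constant depending only on the Lipschitz constant $1/a$. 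On the denominator, $\sigma(\Lambda_h\cap B_{C_a\rho}(P))\lesssim (C_a\rho)^{n-1}$ for the same reason. Dividing, the $\rho^{n-1}$ factors cancel and we are left with $(M_h w)(P)\ge C\nu$ with $C=C(n)$ (after $a$, and hence $b,\gamma,C_a$, are fixed), which is exactly \eqref{Eqqq-23}.

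The only mild obstacle is bookkeeping of constants: one must make sure the ball radius $C_a\rho$ used to test $M_h$ genuinely contains the relevant slice of $\Lambda_h$, which requires the inclusion $x'\in 2R$ from Lemma~\ref{l6} together with the $1/a$-Lipschitz bound on $h$ to control how far $(h(x'),x')$ can be from the slice over $R$; and one must use that comparability of surface measure to planar measure on a Lipschitz graph is two-sided with constants depending only on the Lipschitz constant. Neither is serious. No interior regularity of $v$ is needed here — all the analytic content has already been extracted in Lemma~\ref{l6} — so this corollary is essentially a geometric/measure-theoretic repackaging, and I would present it as such.
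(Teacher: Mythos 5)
Your argument is correct and is exactly the intended one: the paper omits the proof (deferring to \cite{DHM}), where the corollary is obtained precisely by testing $M_{h_{\nu,a}}$ on the ball of radius comparable to that of the ball $R$ from Lemma~\ref{l6}, using $w\ge 0$, the pointwise bound \eqref{Eqqq-18} on the slice over $R$, and the two-sided comparability of surface measure on the $1/a$-Lipschitz graph $\Lambda_{h_{\nu,a}}$ with Lebesgue measure on ${\BBR}^{n-1}$. The only cosmetic point is that the constant genuinely depends on the fixed aperture $a$ as well as $n$, as you note, which matches the paper's convention of treating $a$ as a fixed background parameter.
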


The next Lemma is an analogue of \cite[Lemma 5.4]{DHM}. 
Motivated by \cite{FMZ} we depart in the proof from the approach given in \cite{DHM}. This allows us to  drop some assumptions on the coefficients of our operator we have made in \cite{DHM} which are not necessary.

\begin{lemma}\label{S3:L8-alt1} 
Let $\Omega={\mathbb R}^n_+$ and let ${\mathcal L}$ be an operator of the form
\begin{equation}\label{ES-new2ax}
\mathcal{L}u=\left[ \partial_{i} \left({A}_{ij}^{\alpha \beta}(x) \partial_{j} u_{\beta}\right)+\partial_{0} \left({\varepsilon}_{0j}^{\alpha \beta}(x) \partial_{j} u_{\beta}\right)
+{B}_{i}^{\alpha \beta}(x) \partial_{i}u_{\beta}\right]_{\alpha}
\end{equation}
such that $A_{0j}^{\alpha\beta}(x)=\delta_{\alpha\beta}\delta_{0j}$ and
\begin{equation}\label{Car-m22}
d\mu(x)=\left[\left(\sup_{B_{\delta(x)/2}(x)}|\varepsilon|\right)^{2}\delta^{-1}(x)
+\left(\sup_{B_{\delta(x)/2}(x)}|B|\right)^{2}\delta(x)\right] \,dx
\end{equation}
is a Caleson measure with norm $M$. 

\noindent Suppose $u$ is a weak solution of $\mathcal{L}u=0$ in $\Omega$.  For a fixed $a>0$, consider an arbitrary Lipschitz function $\hbar:{\mathbb R}^{n-1}\to \mathbb R$ such that
\begin{equation}
\|\nabla \hbar\|_{L^\infty}\le 1/a,\qquad \hbar(x')\ge 0\text{ for all }x'\in{\mathbb R}^{n-1}.\label{hbarprop}
\end{equation}
Then for $b=b(6a)>0$ as in Lemma~\ref{l6} we have the following. For an arbitrary surface ball $\Delta_r=B_r(Q)\cap\partial\Omega$, with $Q\in\partial\Omega$ and $r>0$ such that at at least one point of $\Delta_r$
the inequality $\hbar(x')\le 2r$ holds we have the following estimate:
\begin{align}\label{TTBBMM}
\int_{1/6}^6\int_{\Delta_r}\big|u\big(\theta\hbar(x'),x'\big)\big|^2\,dx'\,d\theta
&\leq C\|S_b(u)\|_{L^2(\Delta_{2r})}
\|\tilde{N}_a(u)\|_{L^2(\Delta_{2r})}
\nonumber\\
&\quad+C\|S_b(u)\|^2_{L^2(\Delta_{2r})}+\frac{C}{r}\iint_{\mathcal{K}}|u|^{2}\,dX,
\end{align}
for some $C\in(0,\infty)$ that only depends on $a,\Lambda,n,N$ and $M$ but not on $u$ or $\Delta_r$. By $\mathcal{K}$ we have denoted a region inside $\Omega$ such that its diameter, 
distance to the graph $(\hbar(\cdot),\cdot)$, and distance to $Q$, are all comparable to $r$. 
Also, the cones used to define the square and nontangential 
maximal functions in this lemma have vertices on $\partial\Omega$.

Moreover, the term $\frac{C}{r}\displaystyle\iint_{\mathcal{K}}|u|^2\,dX$ appearing 
in \eqref{TTBBMM} may be replaced by the quantity
\begin{equation}\label{Eqqq-25}
Cr^{n-1}\left|\dint_{B_{\delta(A_r)/2}(A_r)}u(Z)\,dZ\right|^2,
\end{equation}
where $A_r$ is any point inside $\mathcal{K}$ (usually called a corkscrew point of $\Delta_r$).
\end{lemma}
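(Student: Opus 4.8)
The plan is to adapt the Fabes--Mendez--Zinsmeister type argument (as in \cite{FMZ}) to the present system setting, following the broad strategy of \cite[Lemma~5.4]{DHM} but with the simplification noted above. First I would reduce the claim to a statement about the flux of the energy across the graph $\Gamma_\hbar=\{(\theta\hbar(x'),x'):\,x'\in\Delta_r\}$. The key device is a Rellich--Pohozaev type identity: test the equation $\mathcal{L}u=0$ against $u$ multiplied by a suitable vector field. Because the coefficients satisfy the normalization $A_{0j}^{\alpha\beta}=\delta_{\alpha\beta}\delta_{0j}$, the expression $A_{ij}^{\alpha\beta}\partial_j u_\beta \partial_i u_\alpha$ controls $|\nabla u|^2$ from below (ellipticity) while the $(0,j)$ entries collapse to the normal derivative term $\partial_0 u_\beta \partial_0 u_\alpha$. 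Integrating over the region between $\partial\Omega$ and a dilate of $\Gamma_\hbar$, and choosing the multiplier vector field to point roughly in the $x_0$ direction (scaled by $x_0$ or by a smooth cutoff supported near $\Delta_{2r}$), produces on one side the boundary integral $\int_{\Delta_r}|u(\theta\hbar(x'),x')|^2\,dx'$ we want to estimate, and on the other side solid integrals of $|\nabla u||u|\,x_0$ (which become $S$ times $\tilde N$ after Cauchy--Schwarz), solid integrals of $|\nabla u|^2 x_0$ (the $S_b(u)^2$ term), the Carleson error terms coming from $\varepsilon$ and $B$ handled via Proposition~\ref{T:Car} exactly as in the square-function estimates of Section~\ref{S4}, and one ``far'' boundary/solid term living in the region $\mathcal{K}$ away from both $Q$ and $\Gamma_\hbar$, which gives the $\frac{C}{r}\iint_{\mathcal K}|u|^2$ contribution.

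Concretely the steps are: (1) pick the cutoff and the multiplier field adapted to the Lipschitz graph $\hbar$ — here the hypothesis $\|\nabla\hbar\|_\infty\le 1/a$ ensures the relevant cones of aperture comparable to $a$ (hence the choice $b=b(6a)$) fit inside $\Omega$, and the hypothesis that $\hbar(x')\le 2r$ somewhere on $\Delta_r$ together with the Lipschitz bound forces $\hbar\le Cr$ throughout $\Delta_{2r}$, so all the cubes involved have comparable size $\approx r$; (2) write the Rellich identity and identify the good boundary term over $\Gamma_\hbar$; (3) integrate in the dilation parameter $\theta$ over $[1/6,6]$ to average out the choice of graph, which is what lets us bound a genuine $L^2$ norm on a family of graphs rather than on a single hypersurface; (4) estimate each solid term: the cross term $\iint |\nabla u||u|x_0$ by Cauchy--Schwarz and Fubini as $\|S_b(u)\|_{L^2(\Delta_{2r})}\|\tilde N_a(u)\|_{L^2(\Delta_{2r})}$, the pure-gradient term as $\|S_b(u)\|^2_{L^2(\Delta_{2r})}$, and the first-order/perturbation terms involving $B$ and $\varepsilon_{0j}$ by the Carleson bound $M$ and Proposition~\ref{T:Car}, absorbing whatever is absorbable; (5) collect the leftover boundary pieces supported where the cutoff transitions, away from $Q$, into the single region $\mathcal K$ term. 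Finally, to get the alternative form \eqref{Eqqq-25}, replace $\frac{C}{r}\iint_{\mathcal K}|u|^2$ using the interior Caccioppoli inequality (Proposition~\ref{caccio}) and a Poincar\'e--type argument on $\mathcal K$: the oscillation of $u$ over $\mathcal K$ is controlled by $\iint_{\mathcal K}|\nabla u|^2$, which in turn is bounded by the square function already on the right-hand side, so only the average of $u$ over a corkscrew ball $B_{\delta(A_r)/2}(A_r)\subset\mathcal K$ remains, giving the term $Cr^{n-1}\big|\dint_{B_{\delta(A_r)/2}(A_r)}u\big|^2$.

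The main obstacle I anticipate is Step~(2)--(3): making the Rellich--Pohozaev identity work \emph{without} pointwise regularity of $u$ and without the extra coefficient assumptions used in \cite{DHM}. One must be careful that every term generated by moving derivatives off the coefficients $\varepsilon_{0j}$ and $B$ via integration by parts is either a genuine square-function/nontangential term or is controlled by the Carleson norm $M$; the presence of the extra second-order perturbation $\partial_0(\varepsilon_{0j}^{\alpha\beta}\partial_j u_\beta)$ is exactly the new feature relative to \cite{DHM}, and the point (following \cite{FMZ}) is that since this term only involves the $x_0$-derivative structure and $\varepsilon$ enters through a Carleson measure with a $\delta^{-1}$ weight, it can be treated on the same footing as the lower-order term $B$ after one integration by parts in $x_0$. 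The averaging in $\theta$ is what converts the otherwise ill-defined trace of $\nabla u$ on a fixed graph into an honest $L^2$ bound, so the Fubini bookkeeping there — keeping track of which cone and which truncation height appears — is the delicate part of the write-up, but it is essentially the same bookkeeping as in \cite[Lemma~5.4]{DHM} and \cite{FMZ}.
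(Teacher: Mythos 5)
Your proposal follows essentially the same route as the paper's proof: the identity obtained by writing $u_\alpha^2(\theta\hbar(x'),x')\zeta$ as a vertical integral of $\partial_0[u_\alpha^2\zeta]$ over the region above the graph, integrating by parts against the multiplier $x_0\zeta$, substituting the equation for $\partial^2_{00}u_\alpha$, treating the perturbation $\partial_0(\varepsilon_{0j}^{\alpha\beta}\partial_j u_\beta)$ by one further integration by parts in $x_0$ together with the Carleson bound $M$, averaging in $\theta$ over $[1/6,6]$ to convert the graph integral of $|\nabla u|^2|\hbar|^2$ into a square-function term, and invoking Poincar\'e for the corkscrew-average form \eqref{Eqqq-25}. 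Two small corrections: the integration must run upward from the graph $\theta\hbar$ to the height $\approx r_0+2r$ where the cutoff $\zeta_0$ dies (this is what produces the far term on $\mathcal K$), not ``between $\partial\Omega$ and a dilate of $\Gamma_\hbar$'', which would instead leave an uncontrolled trace of $u$ on $\partial\Omega$; and no lower ellipticity bound is actually used here --- all terms are estimated from above, so the constant depends on $\Lambda$ but not on $\lambda$.
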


\begin{proof} Let $\Delta_r=B_r(Q)\cap \partial\Omega$ be as in the statement of our Lemma. Writing $Q$ as $(q_0,q')$, let $\zeta$ be a smooth cutoff function of the form $\zeta(x_0, x')=\zeta_{0}(x_0)\zeta_{1}(x')$ where
\begin{equation}\label{Eqqq-27}
\zeta_{0}= 
\begin{cases}
1 & \text{ in } (-\infty, r_0+r], 
\\
0 & \text{ in } [r_0+2r, \infty),
\end{cases}
\qquad
\zeta_{1}= 
\begin{cases}
1 & \text{ in } B_{r}(q'), 
\\
0 & \text{ in } \mathbb{R}^{n}\setminus B_{2r}(q')
\end{cases}
\end{equation}
and
\begin{equation}\label{Eqqq-28}
r|\partial_{0}\zeta_{0}|+r|\nabla_{x'}\zeta_{1}|\leq c
\end{equation}
for some constant $c\in(0,\infty)$ independent of $r$. Here 
$r_0=6\sup_{x'\in B_r(q')}\hbar(x')$. Observe that our assumptions imply that
$$0\le r_0-6\hbar(x')\le  r_0 \lesssim r,\qquad \mbox{for all }x'\in B_{2r}(q').$$

Our goal is to control the $L^2$ norm of $u\big(\theta\hbar(\cdot),\cdot\big)$.  We fix $\alpha\in\{1,\dots,N\}$ and proceed to estimate
\begin{align}
&\hskip -0.20in
\int_{B_{r}(q')}u_{\alpha}^{2}(\theta\hbar(x'),x')\,dx' \le \int_{B_{2r}(q')}u_{\alpha}^{2}(\theta\hbar(x'),x')\zeta(\theta\hbar(x'),x')\,dx'
\nonumber\\[4pt]
&\hskip 0.70in
=-\iint_{\mathcal S(q',r,r_0,\theta\hbar)}\partial_{0}\left[u_{\alpha}^{2}(x_0,x')\zeta(x_0,x')\right]\,dx_0\,dx',
\nonumber
\end{align}
where $\mathcal S(q',r,r_0,\theta\hbar)=\{(x_0,x'):x'\in B_{2r}(q')\mbox{ and }\theta\hbar(x')<x_0<r_0+2r\}$. Hence:

\begin{align}\nonumber
&\hskip 0.10in
\mathcal I=\int_{B_{2r}(q')}u_{\alpha}^{2}(\theta\hbar(x'),x')\zeta(\theta\hbar(x'),x')\,dx' \le-2\iint_{\mathcal S(q',r,r_0,\theta\hbar)}u_{\alpha}\partial_{0}u_{\alpha}\zeta\,dx_0\,dx'  
\\[4pt]
&\hskip 0.70in
\quad-\iint_{\mathcal S(q',r,r_0,\theta\hbar)}u_{\alpha}^{2}(x_0,x')\partial_{0}\zeta\,dx_0\,dx'
=:\mathcal{A}+IV.\label{u6tg}
\end{align}
We further expand the term $\mathcal A$ as a sum of three terms obtained 
via integration by parts with respect to $x_0$ as follows:
\begin{align}\label{utAA}
\mathcal A &=-2\iint_{\mathcal S(q',r,r_0,\theta\hbar)}u_{\alpha}\partial_{0} 
u_{\alpha}(\partial_{0}x_0)\zeta\,dx_0\,dx' 
\nonumber\\[4pt]
&=2\iint_{\mathcal S(q',r,r_0,\theta\hbar)}\left|\partial_{0}u_{\alpha}\right|^{2}x_0\zeta\,dx_0\,dx' 
\nonumber\\[4pt]
&\quad +2\iint_{\mathcal S(q',r,r_0,\theta\hbar)}u_{\alpha}\partial^2_{00}u_{\alpha}x_0\zeta\,dx_0\,dx' 
\nonumber\\[4pt]
&\quad +2\iint_{\mathcal S(q',r,r_0,\theta\hbar)}u_{\alpha}\partial_{0}u_{\alpha}x_0\partial_{0}\zeta\,dx_0\,dx' 
\nonumber\\[4pt]
&=:I+II+III.
\end{align}

We start by analyzing the term $II$. As $\mathcal Lu=0$, \eqref{ES-new2ax} and the fact that $A_{00}^{\alpha\beta}=I_{N\times N}$ allows us to write
\begin{equation}\label{S3:T8:E01-x}
\partial^2_{00}u_{\alpha}
=-\left(\sum_{(i,j)\neq(0,0)}\partial_{i}\left({A}_{ij}^{\alpha\beta}\partial_{j}u_{\beta}\right)\right)-\partial_{0}\left({\varepsilon}_{0j}^{\alpha\beta}\partial_{j}u_{\beta}\right)
-B_i^{\alpha\beta}\partial_iu_\beta.
\end{equation}
Since $A_{0j}^{\alpha\beta}=0$ for $j>0$ this further simplifies to 
\begin{equation}\label{S3:T8:E01}
\partial^2_{00}u_{\alpha}
=-\left(\sum_{i>0}\partial_{i}\left({A}_{ij}^{\alpha\beta}\partial_{j}u_{\beta}\right)\right)-\partial_{0}\left({\varepsilon}_{0j}^{\alpha\beta}\partial_{j}u_{\beta}\right)
-B_i^{\alpha\beta}\partial_iu_\beta.
\end{equation}

In turn, this permits us to write the term $II$ as
\begin{align}
II &=-2\sum_{i>0}\iint_{\mathcal S(q',r,r_0,\theta\hbar)}u_{\alpha}\partial_{i}\left({A}_{ij}^{\alpha\beta}\partial_{j}u_{\beta}\right)x_0\zeta\,dx_0\,dx' 
\nonumber\\[4pt]
&\quad-2\iint_{\mathcal S(q',r,r_0,\theta\hbar)}u_{\alpha}\partial_{0}\left({\varepsilon}_{0j}^{\alpha\beta}\partial_{j}u_{\beta}\right)x_0\zeta\,dx_0\,dx' 
\nonumber\\[4pt]
&\quad-2\iint_{\mathcal S(q',r,r_0,\theta\hbar)}u_{\alpha}B_i^{\alpha\beta}\partial_iu_\beta x_0\zeta\,dx_0\,dx'
\nonumber\\[4pt]
&=2\sum_{i>0}\iint_{\mathcal S(q',r,r_0,\theta\hbar)}A_{ij}^{\alpha\beta}\partial_iu_{\alpha}\partial_{i}u_{\beta}x_0\zeta\,dx_0\,dx' 
\nonumber\\[4pt]
&\quad+2\sum_{i>0}\iint_{\mathcal S(q',r,r_0,\theta\hbar)}A_{ij}^{\alpha\beta}u_{\alpha}\partial_{i}u_{\beta}x_0\partial_i\zeta\,dx_0\,dx' 
\nonumber\\[4pt]
&\quad+2\iint_{\mathcal S(q',r,r_0,\theta\hbar)}\varepsilon_{0j}^{\alpha\beta}\partial_0u_{\alpha}\partial_{i}u_{\beta}x_0\zeta\,dx_0\,dx' 
\nonumber\\[4pt]
&\quad+2\iint_{\mathcal S(q',r,r_0,\theta\hbar)}\varepsilon_{0j}^{\alpha\beta}u_{\alpha}\partial_{i}u_{\beta}\zeta\,dx_0\,dx' 
\nonumber\\[4pt]
&\quad+2\iint_{\mathcal S(q',r,r_0,\theta\hbar)}\varepsilon_{0j}^{\alpha\beta}u_{\alpha}\partial_{i}u_{\beta}x_0\partial_0\zeta\,dx_0\,dx' 
\nonumber\\[4pt]
&\quad-2\iint_{\mathcal S(q',r,r_0,\theta\hbar)}B_i^{\alpha\beta}u_{\alpha}\partial_iu_\beta x_0\zeta\,dx_0\,dx'
\nonumber\\[4pt]
&\quad-2\sum_{i>0}\int_{\partial\mathcal S(q',r,r_0,\theta\hbar)}A_{ij}^{\alpha\beta}u_{\alpha}\partial_ju_\beta x_0\zeta\nu_i\,dS
\nonumber\\[4pt]
&=:II_{1}+II_{2}+II_{3}+II_{4}+II_{5}+II_{6}+II_{7}.\label{TFWW}
\end{align}
Here we have integrated by parts w.r.t. $\partial_i$. The boundary integral (term $II_7$) vanishes everywhere except on the graph of the function $\theta\hbar$ which implies that
\begin{align}
|II_7|&\le C\int_{B_{2r}(q')}|u_\alpha(\theta\hbar(\cdot),\cdot)\partial_ju_\beta (\theta\hbar(\cdot),\cdot) \theta\hbar(\cdot)\zeta(\theta\hbar(\cdot),\cdot)\nu_i|dS.\nonumber\\
&\le \frac12\int_{B_{2r}(q')}u_{\alpha}^{2}(\theta\hbar(x'),x')\zeta(\theta\hbar(x'),x')\,dx'\nonumber\\&\quad+C'
\int_{B_{2r}(q')}|\nabla u(\theta\hbar(x'),x')|^2|\hbar(x')|^2\,dx'=\frac12\mathcal I+II_8.
\end{align}
We can hide the first term as it is the same as on the lefthand side of \eqref{u6tg}, while the second term after integrating $II_8$ in $\theta$ becomes:
\begin{align}
\int_{1/6}^6|II_8|\,d\theta&\le C\int_{1/6}^6\int_{B_{2r}(q')}|\nabla u(\theta\hbar(x'),x')|^2 |\hbar(x')|^2dx'd\theta.\nonumber\\
&\lesssim\iint_{[0,r_0]\times B_{2r}(q')}|\nabla u|^2x_0\,dx_0\,dx'\lesssim\|S_b(u)\|^2_{L^2(B_{2r})}.
\end{align}

The remaining (solid integral) terms that are of the same type we estimate together. Firstly, we have 
\begin{equation}\label{Eqqq-29}
I+II_1+II_3\leq C(\lambda,\Lambda,n,N)\|S_b(u)\|^2_{L^2(B_{2r})}.
\end{equation}
Here, the estimate holds even if the square function truncated at a hight $O(r)$.
Next, since $r|\nabla\zeta|\le c$, if the derivative falls on the cutoff function $\zeta$ we have
\begin{align}\label{TDWW}
II_2+II_5 &\leq C(\lambda,\Lambda,n,N)\iint_{[0,2r]\times B_{2r}}\left|\nabla u\right||u|\frac{x_0}{r}\,dx_0\,dx' 
\nonumber\\[4pt]
&\leq C(\lambda,\Lambda,n,N)\left(\iint_{[0,2r]\times B_{2r}}|u|^{2}\frac{x_0}{r^{2}}\,dx_0\,dx'\right)^{1/2} 
\|S^{2r}_b(u)\|_{L^2(B_{2r})} 
\nonumber\\[4pt]
&\leq C(\lambda,\Lambda,n,N)\|S_b(u)\|_{L^2(B_{2r})}\|\tilde{N}_a(u)\|_{L^2(B_{2r})}.
\end{align}

The Carleson condition for \eqref{Car-m22} and the Cauchy-Schwarz inequality imply
\begin{equation}\nonumber
II_{4} + II_{6} \leq C(n,N)M^{1/2} 
\|S_b(u)\|_{L^2(B_{2r})}\|\tilde{N}_a(u)\|_{L^2(B_{2r})}.
\end{equation}
Finally, the interior term $IV$, which arises from the fact that $\partial_{0}\zeta$ vanishes on the set
$(-\infty,r_0+r)\cup(r_0+2r,\infty)$ may be estimated as follows:
\begin{equation}\label{Eqqq-31}
IV\leq\frac{C}{r}\iint_{[r_0+r,r_0+2r]\times B_{2r}(q')}|u|^{2}\,dx_0\,dx'.
\end{equation}
We put together all terms, sum them in $\alpha$ and integrate in $\theta$. The above analysis ultimately yields \eqref{TTBBMM}.
Finally, the last claim in the statement of the lemma that we can use \eqref{Eqqq-25} on the righthand side instead of the solid integral is a consequence of the Poicar\'e's inequality (see \cite{DHM} for detailed discussion).
\end{proof}

We now make use of Lemma~\ref{S3:L8-alt1}, involving the stopping time Lipschitz functions 
$\theta h_{\nu,a}(w)$, in order to obtain localized good-$\lambda$ inequality. We omit the proof as it is identical to the one given in \cite{DHM}. Here
$$Mf(x'):=\sup_{r>0}\dint_{|x'-z'|<r}|f(z')|\,dz'\mbox{ for }x'\in{\mathbb{R}}^{n-1},$$ 
denotes the standard Hardy-Littlewood maximal function on $\partial\BBR^n_+=\BBR^{n-1}$. 

\begin{lemma}\label{LGL-loc-alt1} Let $\mathcal L$ be an operator as in \eqref{ES-new2ax} with coefficients satisfying \eqref{Car-m22} and $A_{0j}^{\alpha\beta}(x)=\delta_{\alpha\beta}\delta_{0j}$.
Consider any boundary ball $\Delta_d=\Delta_d(Q)\subset {\mathbb R}^{n-1}$, let $A_d=(d/2,Q)$ be its corkscrew point and let
\begin{equation}
\nu_0=\left(\dint_{B_{d/4}(A_d)}|u(z)|^2\,dz\right)^{1/2}.
\end{equation}
Then for each $\gamma\in(0,1)$ there exists a constant $C(\gamma)>0$ 
such that $C(\gamma)\to 0$ as $\gamma\to 0$ and with the property that for each $\nu>2\nu_0$ and 
each energy solution $u$ of \eqref{E:D} there holds 
\begin{align}\label{eq:gl2}
&\hskip -0.20in 
\Big|\Big\{x'\in {\BBR}^{n-1}:\,\tilde{N}_a(u\chi_{T(\Delta_d)})>\nu,\,(M(S^2_b(u)))^{1/2}\leq\gamma\nu,
\nonumber\\[4pt] 
&\hskip 0in
\big(M(S^2_b(u))M(\tilde{N}_a^2(u\chi_{T(\Delta_d)}))\big)^{1/4}\leq\gamma\nu\Big\}\Big|
\nonumber\\[4pt] 
&\hskip 0.50in
\quad\le C(\gamma)\left|\big\{x'\in{\BBR}^{n-1}:\,\tilde{N}_a(u\chi_{T(\Delta_d)})(x')>\nu/32\big\}\right|.
\end{align}
Here $\chi_{T(\Delta_d)}$ is the indicator function of the Carleson region $T(\Delta_d)$ and the square function
$S_b$ in \eqref{eq:gl2} is truncated at the height $2d$. Similarly, the Hardy-Littlewood maximal operator $M$
is only considered over all balls $\Delta'\subset\Delta_{md}$ for some enlargement constant $m=m(a)\ge 2$.
\end{lemma}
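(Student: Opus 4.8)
The plan is to run the stopping-time good-$\lambda$ scheme, in the spirit of Dahlberg--Jerison--Kenig and as carried out for elliptic systems in \cite{DHM}; the one genuinely new ingredient is that the boundary $L^2$-estimate along Lipschitz stopping-time graphs is now supplied by Lemma~\ref{S3:L8-alt1}, which (following \cite{FMZ}) was proved here under the weaker hypotheses the present operators satisfy. First I would fix $\Delta_d$, its corkscrew point $A_d=(d/2,Q)$ and the threshold $\nu_0$, set $v:=u\chi_{T(\Delta_d)}$, and let $w$ be the $L^2$-average of $v$ as in \eqref{w}, so that $\tilde N_a(v)=N_a(w)$ on $\partial\Omega$. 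Fix $\gamma\in(0,1)$ and $\nu>2\nu_0$, and write $\mathcal B_\nu$ for the set on the left-hand side of \eqref{eq:gl2}. Then $\mathcal B_\nu\subseteq E_{\nu,a}=\{N_a(w)>\nu\}$, and at each $x'\in\mathcal B_\nu$ one has $S_b(v)(x')^2\le M(S_b^2(u))(x')\le(\gamma\nu)^2$, so the hypotheses of Lemma~\ref{l6} hold at every point of $\mathcal B_\nu$.

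Next I would decompose the open set $\{\tilde N_a(v)>\nu/32\}$ into Whitney cubes $\{Q_j\}$ and reduce the assertion to the local bound $|\mathcal B_\nu\cap Q_j|\le C(\gamma)\,|Q_j|$ with $C(\gamma)\to0$ as $\gamma\to0$; summing over $j$ and using $\sum_j|Q_j|=|\{\tilde N_a(v)>\nu/32\}|$ then gives \eqref{eq:gl2}. The condition $\nu>2\nu_0$ is precisely what confines the argument to the enlargement $\Delta_{md}$ and lets $M$ be taken only over balls inside $\Delta_{md}$: off $\Delta_{md}$ the corkscrew average $\nu_0$ dominates $\tilde N_a(v)$, so $\mathcal B_\nu$ never reaches there. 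Fixing $Q_j$ that meets $\mathcal B_\nu$ and a reference point $x_j'\in\mathcal B_\nu\cap Q_j$, for each $x'\in\mathcal B_\nu\cap Q_j$ Lemma~\ref{l6} yields a boundary ball $R=R_{x'}$ with $x'\in 2R$ and $|w(h_{\nu,a}(w)(z'),z')|>\nu/2$ for all $z'\in R$; using the Whitney property (so that $h_{\nu,a}(w)\le h_{\nu/32,a}(w)\lesssim\ell(Q_j)$ on $Q_j$) and the geometry of cones, these balls have radius $\lesssim\ell(Q_j)$ and a fixed dilate $CR$ lies in $CQ_j$. From $\{R_{x'}\}$ I would extract a pairwise disjoint Vitali subfamily $\{R_k\}$ (with radii $r_k$ and corkscrew points $A_k$) such that $\bigcup_k 5R_k\supseteq\mathcal B_\nu\cap Q_j$.

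The core step is to combine this lower bound with Lemma~\ref{S3:L8-alt1}. The stopping-time function $\hbar:=h_{\nu,a}(w)$ is nonnegative (since $\Omega=\BBR^n_+$), is Lipschitz with constant $1/a$ by Lemma~\ref{S3:L5}(i), and by Lemma~\ref{S3:L5}(ii) satisfies $\hbar\le 2r_k$ at some point of $R_k$; hence Lemma~\ref{S3:L8-alt1} applies on each $R_k$ with this $\hbar$. Because $w(\hbar(z'),z')$ is the $L^2$-average of $v$ over a ball of radius comparable to $\hbar(z')$, a change of variables $x_0=\theta\hbar(\cdot)$ controls $\int_{R_k}w(\hbar(z'),z')^2\,dz'$ from above by a constant times $\int_{1/6}^{6}\!\int_{CR_k}|u(\theta\hbar(x'),x')|^2\,dx'\,d\theta$ (here $|v|\le|u|$ is used, so this direction is harmless), and \eqref{TTBBMM} then yields
\begin{equation*}
\tfrac{\nu^2}{4}\,|R_k|\;\lesssim\;\|S_b(u)\|_{L^2(CR_k)}\,\|\tilde N_a(u)\|_{L^2(CR_k)}+\|S_b(u)\|^2_{L^2(CR_k)}+C\,r_k^{\,n-1}\left|\dint_{B_{\delta(A_k)/2}(A_k)}u\right|^2 .
\end{equation*}
Summing over $k$, using the bounded overlap of the dilates $CR_k\subseteq CQ_j$ together with the two smallness conditions in \eqref{eq:gl2} at $x_j'$, the first two sums are at most $C\big(M(S_b^2(u))(x_j')\,M(\tilde N_a^2(v))(x_j')\big)^{1/2}|Q_j|+C\,M(S_b^2(u))(x_j')|Q_j|\le C\gamma^2\nu^2\,|Q_j|$.

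The step I expect to be the main obstacle is the interior (corkscrew) term $r_k^{\,n-1}|\dint_B u|^2$ coming from \eqref{TTBBMM}: it must be estimated so that, after summation over $k$ and over the Whitney cubes, its total contribution is genuinely $o(1)$ as $\gamma\to0$ — this is the technical heart of the good-$\lambda$ bound, and it is handled exactly as in \cite{DHM}, exploiting that the stopping-time balls place the relevant corkscrew points above the graph of $\hbar$, where the average of $v$ is transferred down to the boundary along a chain of balls of bounded length, at the cost only of square-function quantities (already $\le\gamma\nu$) together with the threshold $\nu>2\nu_0$. Granting this, assembling the Whitney pieces gives $|\mathcal B_\nu|\le C(\gamma)\,|\{\tilde N_a(v)>\nu/32\}|$ with $C(\gamma)\to0$. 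The remaining bookkeeping — matching the aperture $b$ to the choice $b(6a)$ of Lemma~\ref{l6}, truncating $S_b$ at height $2d$, and restricting $M$ to balls in $\Delta_{md}$ — uses only the monotonicity of $S_b$ and $\tilde N_a$ in the aperture and is routine, again as in \cite{DHM}.
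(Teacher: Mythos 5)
Your architecture coincides with the paper's: the paper omits the proof entirely, citing \cite{DHM}, and your reconstruction follows that scheme (stopping time $h_{\nu,a}$, Whitney decomposition of $\{\tilde N_a(u\chi_{T(\Delta_d)})>\nu/32\}$, the pointwise lower bound of Lemma~\ref{l6} along the stopping-time graph, and the $L^2$ boundary estimate of Lemma~\ref{S3:L8-alt1}). Replacing the route through Corollary~\ref{S3:L6} (the Hardy--Littlewood maximal function on $\Lambda_{h_{\nu,a}}$ together with its weak-type bound) by a direct Vitali covering of the balls produced by Lemma~\ref{l6} is a harmless variant: both devices convert the pointwise bound $|w(h_{\nu,a}(w)(z'),z')|>\nu/2$ on $R$ into $\sum_k|R_k|\gtrsim|\mathcal B_\nu\cap Q_j|$ together with $\int_{R_k}|w(h_{\nu,a}(w)(z'),z')|^2\,dz'\ge\tfrac{\nu^2}{4}|R_k|$, which is all the argument needs.

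The one place your outline does not close as written is exactly the step you flag, and the issue is quantitative. Chaining the corkscrew point $A_{r_k}$ to a point in a cone over some $\bar x\notin\{\tilde N_a(v)>\nu/32\}$ gives $\big|\dint_{B_{\delta(A_{r_k})/2}(A_{r_k})}u\big|\le\nu/32+C\gamma\nu$, so the term \eqref{Eqqq-25} is of size $C\nu^2 r_k^{n-1}/1024$ --- \emph{comparable to}, not $o(1)$ relative to, the lower bound $\nu^2|R_k|/4$; summed over $k$ and $j$ this leaves a fixed constant in $C(\gamma)$ that does not tend to $0$ with $\gamma$, contradicting the conclusion of the lemma. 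The device that removes it is to observe that \eqref{TTBBMM}, with the replacement \eqref{Eqqq-25}, applies equally to $u-c$ for any constant vector $c$ (still a solution, with the same gradient, hence the same square-function terms); choosing $c$ to be the corkscrew average makes \eqref{Eqqq-25} vanish identically. The chaining estimate $|c|\le\nu/32+C\gamma\nu$ is then used only to check that the Lemma~\ref{l6} lower bound degrades from $\nu/2$ to $\nu/2-|c|\ge\nu/4$ and that $\tilde N_a(u-c)\le\tilde N_a(u)+|c|$ costs at most an extra $C\gamma\nu^2|Q_j|$ in the cross term; the hypothesis $\nu>2\nu_0$ performs the same normalization at the top scale $\Delta_d$. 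With this subtraction in place your estimate closes and yields $C(\gamma)\lesssim\gamma$, as required.
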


Finally we have the following by the same argument as in \cite{DHM}.

\begin{proposition}\label{S3:C7-alt1} Under the assumptions of Lemma \ref{LGL-loc-alt1},
for any $p>0$ and $a>0$ there exists an integer $m=m(a)\ge 2$ and a finite 
constant $C=C(n,N,p,a,\|\mu\|_{\mathcal C})>0$ such that for all balls $\Delta_d\subset{\mathbb R}^{n-1}$ we have
\begin{equation}\label{S3:C7:E00ooloc}
\|\tilde{N}^r_a(u)\|_{L^{p}(\Delta_d)}\le C\|S^{2r}_a(u)\|_{L^{p}(\Delta_{md})}+Cd^{(n-1)/p}\left|\dint_{B_{\delta(A_d)/2}(A_d)}u(Z)\,dZ\right|,
\end{equation}
where $A_d$ denotes the corkscrew point of the ball $\Delta_d$.

We also have a global estimate for any $p>0$ and $a>0$. There exists a finite 
constant $C>0$ such that 
\begin{equation}\label{S3:C7:E00oo}
\|\tilde{N}_a(u)\|_{L^{p}({\BBR}^{n-1})}\le C\|S_a(u)\|_{L^{p}({\BBR}^{n-1})}.
\end{equation}
\end{proposition}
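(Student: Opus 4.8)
The plan is to derive the local estimate \eqref{S3:C7:E00ooloc} directly from the good-$\lambda$ inequality \eqref{eq:gl2} of Lemma~\ref{LGL-loc-alt1} by the standard distribution-function integration, and then obtain the global estimate \eqref{S3:C7:E00oo} by letting the ball $\Delta_d$ exhaust $\BBR^{n-1}$ while checking that the corkscrew term disappears. First I would fix the boundary ball $\Delta_d$, abbreviate $F:=\tilde N_a(u\chi_{T(\Delta_d)})$, $G:=(M(S_b^2(u)))^{1/2}$ and $H:=(M(S_b^2(u))M(\tilde N_a^2(u\chi_{T(\Delta_d)})))^{1/4}$, and recall $\nu_0$ from the statement. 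Multiplying \eqref{eq:gl2} by $p\nu^{p-1}$ and integrating in $\nu$ over $(2\nu_0,\infty)$ gives
\begin{equation}\nonumber
\int_{2\nu_0}^\infty p\nu^{p-1}\bigl|\{F>\nu,\ G\le\gamma\nu,\ H\le\gamma\nu\}\bigr|\,d\nu
\le C(\gamma)\,32^p\int_0^\infty p\nu^{p-1}\bigl|\{F>\nu\}\bigr|\,d\nu .
\end{equation}
On the left, on the complementary set one has either $G>\gamma\nu$ or $H>\gamma\nu$, so the usual splitting yields
$\|F\|_{L^p}^p\le C(\gamma)32^p\|F\|_{L^p}^p + C\gamma^{-p}(\|G\|_{L^p}^p+\|H\|_{L^p}^p) + C\nu_0^p d^{n-1}$,
the last term accounting for the range $\nu\le 2\nu_0$. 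Here I would note that $F$ has finite $L^p$ norm a priori (this is where the truncation at height $2d$ and the compact support in $T(\Delta_d)$ are used, together with interior estimates), so for $\gamma$ small enough that $C(\gamma)32^p<1/2$ the first term on the right absorbs into the left.

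Next I would dispose of the maximal-function terms $\|G\|_{L^p}$ and $\|H\|_{L^p}$. Since $M$ acts only over balls contained in $\Delta_{md}$, the Hardy--Littlewood maximal theorem gives $\|G\|_{L^p(\Delta_d)}\le \|M(S_b^2(u))^{1/2}\|_{L^p}\le C\|S_b^{2d}(u)\|_{L^p(\Delta_{md})}$ for $p>2$; for $0<p\le 2$ one instead uses the weak-type $(1,1)$ bound applied to $S_b^2(u)\in L^{p/2}$ when $p/2>1$ and a direct argument (or the fact that $M$ is bounded on $L^q$ for all $q>1$ together with a further splitting) in the remaining range — this is the standard good-$\lambda$ bookkeeping and I would just cite \cite{DHM}. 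For $H$, Cauchy--Schwarz gives $H\le \tfrac12(M(S_b^2(u))^{1/2}+ \epsilon^{-1}M(\tilde N_a^2 F^2\cdots))$ — more cleanly, $H^2\le G\cdot (M(\tilde N_a^2(u\chi_{T}))^{1/2}$, and by the maximal theorem $\|H\|_{L^p}\le C\|S_b^{2d}(u)\|_{L^{p}(\Delta_{md})}^{1/2}\|F\|_{L^p(\Delta_{md})}^{1/2}\le \tfrac12\|F\|_{L^p}+ C\|S_b^{2d}(u)\|_{L^p(\Delta_{md})}$ after Young's inequality, the first term again absorbed. Finally $\nu_0 = (\dint_{B_{d/4}(A_d)}|u|^2)^{1/2}$ is comparable to $|\dint_{B_{\delta(A_d)/2}(A_d)}u|$ up to the average of the oscillation, which by Cacciopoli and the square-function estimate is itself controlled by $\|S_b(u)\|_{L^2}$; this reproduces exactly the two terms on the right of \eqref{S3:C7:E00ooloc} after raising to the $1/p$ power and relabelling $2d$ as $r$, $d$ as $r$.

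For the global estimate \eqref{S3:C7:E00oo} I would let $d\to\infty$ in \eqref{S3:C7:E00ooloc}. The point is that the corkscrew term $d^{(n-1)/p}|\dint_{B_{\delta(A_d)/2}(A_d)}u|$ tends to $0$: because $u$ is an energy solution, $\nabla u\in L^2(\Omega)$, and the average of $u$ over a ball of radius $\sim d$ at height $\sim d$ decays — quantitatively, one writes $u$ at $A_d$ minus its average over a fixed reference ball as a telescoping sum of differences of averages over dyadically growing balls, each difference bounded by $(\int |\nabla u|^2)^{1/2}$ times a negative power of the radius, so that $|\dint_{B_{\delta(A_d)/2}(A_d)}u|\le C + o(d^{(n-1)/p})$ once one also uses that the constant can be normalized away (energy solutions being defined modulo constants, but the a.e.-convergence to $L^p$ data pins the constant). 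Meanwhile $S_a^{2d}(u)\uparrow S_a(u)$ monotonically and $\tilde N_a^r(u)\uparrow \tilde N_a(u)$, so by monotone convergence the local estimate passes to the global one.

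\textbf{Main obstacle.} The genuinely delicate point is the low-exponent range $0<p\le 1$ (and more generally $p\le 2$) in the good-$\lambda$ integration and in handling the maximal-function terms: the Hardy--Littlewood operator is not bounded on $L^p$ for $p\le 1$, so one cannot naively estimate $\|G\|_{L^p}$ and $\|H\|_{L^p}$. This is circumvented exactly as in \cite{DHM} by keeping $M$ \emph{localized} to $\Delta_{md}$ and exploiting that $S_b$ is truncated, so that $M(S_b^2(u))$ is, up to the maximal function of a bounded-support $L^1$ function, pointwise comparable on $\Delta_d$ to a fixed multiple of its average — the weak-$(1,1)$ bound then suffices inside the good-$\lambda$ scheme without ever needing strong $L^p$ boundedness of $M$. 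I would invoke this machinery from \cite{DHM} rather than reproduce it.
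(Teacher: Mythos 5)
The paper gives no proof of this proposition beyond the remark that it follows ``by the same argument as in \cite{DHM}'', and your proposal is precisely that argument: distribution-function integration of the good-$\lambda$ inequality of Lemma \ref{LGL-loc-alt1}, with the localized maximal functions controlled by Hardy--Littlewood, the range $\nu\le 2\nu_0$ producing the corkscrew term via Poincar\'e/Cacciopoli, and the global bound obtained by letting $d\to\infty$ (the vanishing of the corkscrew average relying on the decay of the solution as $x_0\to\infty$ assumed at the start of Section \ref{SS:43}). Your flagging of the range $p\le 2$ as the delicate point and your deferral of it to \cite{DHM} likewise matches the paper; just note that the weak-$(1,1)$ bound by itself does not yield strong $L^{p/2}$ control of $M(S_b^2(u))$ when $p/2\le 1$, so the localized substitute for the maximal theorem genuinely has to be imported from \cite{DHM} rather than derived from the sketch you gave.
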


It remains to consider the system  \eqref{eq-sys-deriv}. The next Lemma is again an analogue of \cite[Lemma 5.4]{DHM}. 

\begin{lemma}\label{S3:L8-alt2} 
Let $\Omega=\BBR^n_+$ and let 
\begin{equation}\label{ES-old2aaa}
\mathcal{L}u=\left[ \partial_{i} \left({A}_{ij}^{\alpha \beta}(x) \partial_{j} u_{\beta}\right)\right]_\alpha
\end{equation}
has coefficients $A$ satisfying the Legendre-Hadamard condition \eqref{EllipLH}
and 
\begin{equation}\label{Car_hatAAxxxss}
d{\mu}(x)=\left(\sup_{B_{\delta(x)/2}(x)}|\nabla{A}|\right)^{2}
\delta(x)\,dx
\end{equation}
is a Carleson measure.

Then there exists $a>0$ with the following significance. Suppose $u$ is a weak solution of $\mathcal{L}u=0$ 
in $\Omega={\mathbb{R}}^n_{+}$. Select $\theta\in[1/6,6]$ and, having picked $\nu>0$ arbitrary,  
let $h_{\nu,a}(w)$ be as in \eqref{h}. Also, consider the domain 
$\mathcal{O}=\{(x_0,x')\in\Omega:\,x_0>\theta h_{\nu,a}(x')\}$ with boundary  
$\partial\mathcal{O}=\{(x_0,x')\in\Omega:\,x_0=\theta h_{\nu,a}(x')\}$. In this context, 
for any surface ball $\Delta_r=B_r(Q)\cap\partial\Omega$, with $Q\in\partial\Omega$ and $r>0$ 
chosen such that $h_{\nu,a}(w)\leq 2r$ pointwise on $\Delta_{2r}$, 
one has
\begin{align}\label{TTBBMM-alt2}
&\int_{\Delta_r}\big|\nabla u\big(\theta h_{\nu,a}(w)(\cdot),\cdot\big)\big|^2\,dx' 
\leq C(1+\|\mu\|^{1/2}_{\mathcal C})\|S_b(\nabla u)\|_{L^2(\Delta_{2r})}
\|\tilde{N}_a(\nabla u)\|_{L^2(\Delta_{2r})}
\nonumber\\
&\quad+\|\mu\|^{1/2}_{\mathcal C}\|\tilde{N}_a(\nabla u)\|_{L^2(\Delta_{2r})}^2+C\|S_b(\nabla u)\|^2_{L^2(\Delta_{2r})}+\frac{c}{r}\iint_{\mathcal{K}}|\nabla u|^{2}\,dX.
\end{align}
Here $C=C(\lambda,\Lambda,n,N)\in(0,\infty)$ and $\mathcal{K}$ is a region inside $\mathcal{O}$ of diameter, 
distance to the boundary $\partial\mathcal{O}$, and distance to $Q$, are all comparable to $r$. 
Again, the term $\frac{c}{r}\displaystyle\iint_{\mathcal{K}}|\nabla u|^2\,dX$ appearing 
in \eqref{TTBBMM} may be replaced by the quantity
\begin{equation}\label{Eqqq-25-alt2}
Cr^{n-1}\left|\dint_{B_{\delta(A_r)/2}(A_r)}\nabla u(Z)\,dZ\right|^2.
\end{equation}
\end{lemma}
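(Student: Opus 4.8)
The plan is to follow the scheme of the proof of Lemma~\ref{S3:L8-alt1}, applied componentwise to the functions $w^k:=\partial_k u$, $k\in\{0,1,\dots,n-1\}$, which carry all the information in $\nabla u$. Differentiating $\mathcal Lu=0$ in $x_k$ and using $A_{0j}^{\alpha\beta}=\delta_{\alpha\beta}\delta_{0j}$ (so that $\partial_k A_{0j}^{\alpha\beta}=0$ and $\partial_k A_{00}^{\alpha\beta}=0$), one sees that $w^k$ solves, in the weak sense,
$$\partial_i\big(A_{ij}^{\alpha\beta}\partial_j w^k_\beta+\partial_k A_{ij}^{\alpha\beta}\,w^j_\beta\big)=0,\qquad \alpha\in\{1,\dots,N\}.$$
The membership $w^k\in W^{1,2}_{\rm loc}(\Omega;\mathbb R^N)$ needed to make this rigorous is exactly the conclusion of Proposition~\ref{Ciac2}, which only requires the Legendre–Hadamard condition. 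Thus $w^k$ solves a Carleson perturbation of $\mathcal L_0$ of the same flavour as the operator in Lemma~\ref{S3:L8-alt1}, the one new feature being that the perturbation tensor $\partial_k A_{ij}$ now sits in \emph{all} slots $(i,j)\neq(0,0)$, not only in the $0j$ slot.

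First I would fix $\alpha$ and $k$, abbreviate $h:=h_{\nu,a}(w)$, introduce the cutoff $\zeta=\zeta_0(x_0)\zeta_1(x')$ and the region $\mathcal S=\mathcal S(q',r,r_0,\theta h)$ above the graph exactly as in the proof of Lemma~\ref{S3:L8-alt1} (here $r_0=6\sup_{B_{2r}(q')}h\lesssim r$ by hypothesis), and write
$$\int_{B_r(q')}\big|w^k_\alpha(\theta h(x'),x')\big|^2\,dx'\le-\iint_{\mathcal S}\partial_0\big[(w^k_\alpha)^2\zeta\big]\,dx_0\,dx'.$$
Expanding by one integration by parts in $x_0$, inserting the factor $\partial_0 x_0=1$, produces $2\iint_{\mathcal S}|\partial_0 w^k_\alpha|^2 x_0\zeta$, the main term $2\iint_{\mathcal S}w^k_\alpha\,\partial^2_{00}w^k_\alpha\,x_0\zeta$, and pieces where a derivative hits $\zeta$. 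Into the main term I substitute the second-gradient form of the equation for $w^k$, namely (using $A_{00}=I_{N\times N}$, $A_{0j}=0$ for $j>0$)
$$\partial^2_{00}w^k_\alpha=-\sum_{(i,j)\ne(0,0)}\partial_i\big(A_{ij}^{\alpha\beta}\partial_j w^k_\beta+\partial_k A_{ij}^{\alpha\beta}\,w^j_\beta\big),$$
and integrate by parts in $\partial_i$ for $i>0$. The resulting terms are of five types: (a) solid integrals $\iint A_{ij}^{\alpha\beta}\partial_i w^k_\alpha\partial_j w^k_\beta\,x_0\zeta$ and $\iint|\partial_0 w^k_\alpha|^2 x_0\zeta$, bounded by $C\|S_b(\nabla u)\|^2_{L^2(\Delta_{2r})}$ using only $|A|\le\Lambda$ and Cauchy–Schwarz — the Legendre–Hadamard hypothesis is enough here because this term never needs a lower bound, precisely as in the proof of Proposition~\ref{Ciac2}; (b) cross terms with $\partial_i\zeta$, bounded by $C\|S_b(\nabla u)\|_{L^2(\Delta_{2r})}\|\tilde N_a(\nabla u)\|_{L^2(\Delta_{2r})}$ via $r|\nabla\zeta|\le c$ and $\iint_{[0,O(r)]\times B_{2r}}|\nabla u|^2 x_0 r^{-2}\lesssim\|\tilde N_a(\nabla u)\|^2_{L^2(\Delta_{2r})}$; (c) the new terms carrying a factor $\partial_k A$, which after Cauchy–Schwarz and Theorem~\ref{T:Car} applied to the Carleson measure $\mu$ of \eqref{Car_hatAAxxxss} contribute $C\|\mu\|^{1/2}_{\mathcal C}\|S_b(\nabla u)\|_{L^2(\Delta_{2r})}\|\tilde N_a(\nabla u)\|_{L^2(\Delta_{2r})}+C\|\mu\|^{1/2}_{\mathcal C}\|\tilde N_a(\nabla u)\|^2_{L^2(\Delta_{2r})}$; (d) a boundary integral along the graph $\{x_0=\theta h(x')\}$, of which half is absorbed into the left-hand side and the remainder has the form $C'\int_{B_{2r}}|\nabla w^k(\theta h(\cdot),\cdot)|^2|h|^2$; (e) the interior term $\tfrac Cr\iint_{[r_0+r,r_0+2r]\times B_{2r}}|\nabla u|^2$ coming from $\partial_0\zeta_0$.

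The remainder in (d) is a slice of $|\nabla^2 u|^2$, which for systems carries no pointwise bound, so it is controlled only after integrating the whole identity in $\theta$ over $[1/6,6]$: the substitution $s=\theta h(x')$ converts $\int_{1/6}^6\int_{B_{2r}}|\nabla w^k(\theta h,\cdot)|^2|h|^2\,d\theta$ into $\iint_{[0,O(r)]\times B_{2r}}|\nabla^2 u|^2 x_0\lesssim\|S_b(\nabla u)\|^2_{L^2(\Delta_{2r})}$, the factor $|h|^2$ combining with the Jacobian $h^{-1}$ and with $s\approx h$ to rebuild the weight $x_0$, just as for the term $II_8$ in Lemma~\ref{S3:L8-alt1}. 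Summing over $k\in\{0,\dots,n-1\}$ — the case $k=0$ being identical since $\partial_0 A_{0j}=0$ as well — and over $\alpha$, and integrating in $\theta$, yields \eqref{TTBBMM-alt2}; replacing the interior term by the corkscrew average \eqref{Eqqq-25-alt2} is the same Poincaré argument as in Lemma~\ref{S3:L8-alt1}.

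The main obstacle will be the bookkeeping forced by the perturbation $\partial_k A_{ij}$ being present for every $(i,j)\ne(0,0)$: unlike in Lemma~\ref{S3:L8-alt1}, the integration by parts in the tangential variables $\partial_i$ ($i>0$) now produces, besides the usual $A$-boundary term, an extra term on the graph $\{x_0=\theta h(x')\}$ carrying a factor $\partial_k A$, and controlling it — together with all the new solid terms — means matching the Carleson bound on $|\nabla A|^2\delta$ (via Theorem~\ref{T:Car}) against the trace of $\nabla u$ on that Lipschitz graph while tracking carefully which factor is estimated by $S_b(\nabla u)$, which by $\tilde N_a(\nabla u)$, and which generates the $\|\mu\|^{1/2}_{\mathcal C}$ prefactor. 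A secondary point is that, $\nabla^2 u$ having no pointwise control for systems, the graph remainder must be routed through the $\theta$-integration rather than through any maximum-principle type estimate — this being exactly the device already used in \cite{DHM}.
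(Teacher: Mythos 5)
There is a genuine gap at the very first step: you invoke the normalization $A_{0j}^{\alpha\beta}=\delta_{\alpha\beta}\delta_{0j}$ (so that $A_{00}=I_{N\times N}$, $\partial_k A_{0j}^{\alpha\beta}=0$, and $\partial^2_{00}$ can be isolated from the equation), but Lemma~\ref{S3:L8-alt2} does not assume this --- it assumes only the Legendre--Hadamard condition \eqref{EllipLH} and the Carleson condition \eqref{Car_hatAAxxxss}. This is not an oversight in the statement: the lemma has to apply after pulling back the pure divergence-form operator from the sawtooth domain $\mathcal O=\{x_0>\theta h_{\nu,a}(x')\}$ onto $\BBR^n_+$ (equivalently, it is later applied to the pullback of $\widetilde{\mathcal L}$ from $\Omega$ in the proof of Theorem~\ref{S3:T3}), and such pullbacks destroy any normalization of the $A_{0j}$ block. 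Restoring it via the transformations of Section~2 of \cite{DHM} introduces first-order terms $B_i^{\alpha\beta}$ built from $\partial A$, which takes the operator out of the class \eqref{ES-old2aaa} the lemma covers. So your substitution $\partial^2_{00}w^k_\alpha=-\sum_{(i,j)\ne(0,0)}\partial_i(\cdots)$, and the claim that the $k=0$ case is ``identical,'' both rest on structure you do not have.

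The paper's proof is built precisely to avoid this. For the tangential components $w^k=\partial_k u$, $k>0$, it never differentiates the PDE at all: it uses the pure calculus identity $\partial^2_{00}w^k_\alpha=\partial_k\partial_0 w^0_\alpha$ and then integrates by parts in the tangential direction $\partial_k$, producing only $\|S_b(\nabla u)\|^2$ and a cross term of the type \eqref{TDWW} --- in particular no $\nabla A$ appears for these components, and no boundary term on the graph carrying $\partial_k A$ (the term you flag as the ``main obstacle'') ever arises. The normal component is then recovered not from $w^0$ directly but from the conormal $H_\alpha=A_{0j}^{\alpha\beta}w^j_\beta$, for which the \emph{undifferentiated} equation gives $\partial_0 H_\alpha=-\sum_{i>0}\partial_i(A_{ij}^{\alpha\beta}w^j_\beta)$; the comparison $|w^0|\approx|A_{00}^{\alpha\beta}w^0_\beta|\lesssim|H|+|\nabla_T u|$ uses only the invertibility of $A_{00}$, which Legendre--Hadamard supplies. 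The terms with $\nabla A$ then appear only in the $H$-computation \eqref{u6tg-xx}, where they are matched against $\tilde N_a(\nabla u)$ via the Carleson condition, yielding the $\|\mu\|_{\mathcal C}^{1/2}\|\tilde N_a(\nabla u)\|^2_{L^2}$ term in \eqref{TTBBMM-alt2}. To repair your argument you would either need to reprove the lemma under the weaker hypotheses along these lines, or add the normalization to the statement and then justify separately why the applications (which involve pullbacks) still go through --- the latter is exactly what the paper's formulation avoids having to do.
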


\begin{proof} Consider the pullback of the operator $\mathcal L$ given by \eqref{ES-old2aaa} from $\mathcal O$
onto $\BBR^n_+$ as in Lemma 5.4 of \cite{DHM}. Slightly abusing the notation we still call this operator $\mathcal L$ with coefficients $A$. We repeat the calculation \eqref{u6tg} with
$w^k=\partial_k u$ in place of $v$. Let us focus on the term analogous to $II$ in \eqref{utAA}. When $k>0$
we have
\begin{align}\label{xxdggr}
II&=\quad2\iint_{[0,2r]\times B_{2r}(y')}w^k_{\alpha}\partial^2_{00}w^k_{\alpha}x_0\zeta\,dx_0\,dx' \\\nonumber
&=\quad2\iint_{[0,2r]\times B_{2r}(y')}w^k_{\alpha}\partial_{k}\partial_0w^0_{\alpha}x_0\zeta\,dx_0\,dx'\\\nonumber
&=-2\iint_{[0,2r]\times B_{2r}(y')}\partial_kw^k_{\alpha}\partial_0w^0_{\alpha}x_0\zeta\,dx_0\,dx''\\\nonumber
&\quad-2\iint_{[0,2r]\times B_{2r}(y')}w^k_{\alpha}\partial_0w^0_{\alpha}x_0\partial_k\zeta\,dx_0\,dx'=II_1+II_2.
\end{align}
Hence clearly, 
$$|II_1|\le C\|S_b(\nabla u)\|^2_{L^2(\Delta_{2r})},$$
and
$II_2$ is a term analogous to \eqref{TDWW} with the corresponding estimate 
$$|II_2|\le C\|S_b(\nabla u)\|_{L^2(\Delta_{2r})}
\|\tilde{N}_a(\nabla u)\|_{L^2(\Delta_{2r})}.$$
The remaining terms are analogous to \eqref{u6tg} and we omit them for the sake of brevity.

We still have to get an estimate for $w^0=\partial_0u$.
Instead, it is more convenient to work with
$$H_\alpha=\sum_{j,\beta}{A}_{0j}^{
\alpha\beta}{w}^j_\beta.$$
Recall that if a linear transformation $T:{\mathbb R}^N\to {\mathbb R}^N$ is invertible, then for some positive constants $c_1,c_2$ we have $c_1|x|\le |Tx|\le c_2|x|$. Using this fact for $x\mapsto A_{00}^{\alpha\beta}x_\beta$ (the matrix $A_{00}^{\alpha\beta}$ is invertible since we assume \eqref{EllipLH}) we have
\begin{align}\label{u6tg-otoh}
&
\int_{B_{2r}(y')}|w^0(0,x')|^{2}\zeta(0,x')\,dx' \approx \sum_{\alpha}\int_{B_{2r}(y')}|{A}_{00}^{\alpha\beta}w^0_\beta(0,x')|^{2}\zeta(0,x')\,dx' 
\nonumber\\
&
\le C(n,N)\sum_{\alpha}\left[\int_{B_{2r}(y')}(|H_\alpha)|^{2}\zeta)(0,x')\,dx'+\sum_{j>0}\int_{B_{2r}(y')}(|A^{\alpha\beta}_{0j}w^j_\beta|^{2}\zeta)(0,x')\,dx'  \right]
\nonumber\\
&
\le C(n,N)\int_{B_{2r}(y')}(|H|^{2}\zeta)(0,x')\,dx'+C(n,N,\Lambda) \int_{B_{r}(y')}|\nabla_T{u}(0,x')|^2\,dx' .
\end{align}
The second term is OK as we have just verified the estimate for the tangential directions in \eqref{xxdggr}. We deal with the first term now. A calculation similar to 
\eqref{u6tg}-\eqref{utAA} gives us
\begin{align}\label{u6tg-x}
&\hskip -0.20in
\int_{B_{2r}(y')}|H|^{2}(0,x')\zeta(0,x')\,dx' 
\nonumber\\[4pt]
&\hskip 0.70in
=-2\iint_{[0,2r]\times B_{2r}(y')} H_\alpha\,\partial_0 H_\alpha\zeta\,dx_0\,dx' 
\nonumber\\[4pt]
&\hskip 0.70in
\quad-\iint_{[0,2r]\times B_{2r}(y')}|H|^{2}(x_0,x')\partial_{0}\zeta\,dx_0\,dx'.
\end{align}
The second term has a similar estimate as \eqref{Eqqq-31}. For the first term we use the fact that ${{\mathcal L}{u}}=0$ which implies that
$$\partial_0 H_\alpha=-\sum_{i>0}\partial_i({A}^{\alpha\beta}_{ij}w^j_\beta).$$
It follows
\begin{align}\label{u6tg-xx}
&\hskip -0.20in
-2\iint_{[0,2r]\times B_{2r}(y')}H_\alpha\,\partial_0 H_\alpha\zeta\,dx_0\,dx' 
\nonumber\\[4pt]
&\hskip 0.20in
=\quad2\sum_{i>0}\iint_{[0,2r]\times B_{2r}(y')}H_\alpha\,\partial_i(A_{ij}^{\alpha\beta}w^j_\beta)\zeta(\partial_0x_0)\,dx_0\,dx' \nonumber
\end{align}
\begin{align}
&\hskip 0.20in
=-2\sum_{i>0}\iint_{[0,2r]\times B_{2r}(y')}\partial_0H_\alpha\,\partial_i(A_{ij}^{\alpha\beta}w^j_\beta)\zeta\,x_0\,dx_0\,dx'
\nonumber\\[4pt]
&\hskip 0.20in
\quad+2\sum_{i>0}\iint_{[0,2r]\times B_{2r}(y')}\partial_iH_\alpha\,\partial_0(A_{ij}^{\alpha\beta}w^j_\beta)\zeta\,x_0\,dx_0\,dx'
\nonumber\\[4pt]
&\hskip 0.20in
\quad-2\sum_{i>0}\iint_{[0,2r]\times B_{2r}(y')}H_\alpha\,\partial_i(A_{ij}^{\alpha\beta}w^j_\beta)(\partial_0\zeta)\,x_0\,dx_0\,dx'
\nonumber\\[4pt]
&\hskip 0.20in
\quad+2\sum_{i>0}\iint_{[0,2r]\times B_{2r}(y')}H_\alpha\,\partial_0(A_{ij}^{\alpha\beta}w^j_\beta)(\partial_i\zeta)\,x_0\,dx_0\,dx'.
\end{align}

We analyze this term by term. In the last two terms, if the derivative falls on $w^j_\beta$ these terms are of the same nature as the term $II_2$ and is handled identically. When the derivative falls on coefficients these are bounded by
$$\iint_{[0,2r]\times B_{2r}(y')}|w|^2|\nabla {A}|\frac{x_0}r\,dx_0\,dx'\lesssim \|\mu\|^{1/2}_{\mathcal C}\|N_a(\nabla u)\|^2_{L^2},$$
where we have used the Cauchy-Schwarz inequality and the Carleson condition. 

The first two terms on the righthand side of \eqref{u6tg-xx} will give us the square function of $w=\nabla u$ when both derivatives fall on $w$ or a mixed term like $II_2$ above or finally when both derivatives hit the coefficients we get terms bounded from above by
$$\iint_{[0,2r]\times B_{2r}(y')}|w|^2|\nabla {A}|^2x_0\,dx_0\,dx'\lesssim \|\mu\|_{\mathcal C}\|N_a(\nabla u)\|^2_{L^2}.$$
With this in hand, the estimate in \eqref{TTBBMM-alt2} follows.
\end{proof}

From this we have as before:

\begin{lemma}\label{LGL-loc-alt2} 
Consider the system ${\mathcal L}u=0$, where $\mathcal L$ is as in Lemma~\ref{S3:L8-alt2} 
with coefficients satisfying the Carleson condition \eqref{Car_hatAAxxxss} and the ellipticity condition \eqref{EllipLH}.

Consider any boundary ball $\Delta_d=\Delta_d(Q)\subset {\mathbb R}^{n-1}$, let $A_d=(d/2,Q)$ be its corkscrew point and let
\begin{equation}
\nu_0=\left(\dint_{B_{d/4}(A_d)}|\nabla u(z)|^2\,dz\right)^{1/2}.
\end{equation}
Then for each $\gamma\in(0,1)$ there exists a constant $C(\gamma)>0$ 
such that $C(\gamma)\to 0$ as $\gamma\to 0$ and with the property that for each $\nu>2\nu_0$ and 
each energy solution $u$ of \eqref{E:D} there holds 
\begin{align}\label{eq:gl2-alt2}
&\hskip -0.20in 
\Big|\Big\{x'\in {\BBR}^{n-1}:\,\tilde{N}_a(\nabla u\chi_{T(\Delta_d)})>\nu,\,(M(S^2_b(\nabla u)))^{1/2}\leq\gamma\nu,
\nonumber\\[4pt] 
&\hskip 0in
\big(M(S^2_b(\nabla u))M(\tilde{N}_a^2(\nabla u\chi_{T(\Delta_d)}))\big)^{1/4}\leq\gamma\nu,\,\big(M(\|\mu\|^{1/2}_{\mathcal C}\tilde{N}_a^2(\nabla u\chi_{T(\Delta_d)}))\big)^{1/2}\leq\gamma\nu\Big\}\Big|
\nonumber\\[4pt] 
&\hskip 0.50in
\quad\le C(\gamma)\left|\big\{x'\in{\BBR}^{n-1}:\,\tilde{N}_a(\nabla u\chi_{T(\Delta_d)})(x')>\nu/32\big\}\right|.
\end{align}
Here $\chi_{T(\Delta_d)}$ is the indicator function of the Carleson region $T(\Delta_d)$ and the square function
$S_b$ in \eqref{eq:gl2} is truncated at the height $2d$. Similarly, the Hardy-Littlewood maximal operator $M$
is only considered over all balls $\Delta'\subset\Delta_{md}$ for some enlargement constant $m=m(a)\ge 2$.
\end{lemma}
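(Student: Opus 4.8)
The plan is to run the same argument that proves Lemma~\ref{LGL-loc-alt1} (this is the proof carried out in \cite{DHM}), everywhere replacing $u$ by $\nabla u$ and invoking Lemma~\ref{S3:L8-alt2} in place of Lemma~\ref{S3:L8-alt1}. First I would fix $\nu>2\nu_0$, set $v=\nabla u$, and let $w$ be the associated $L^2$-average as in \eqref{ww}; the scale-controlled pullback already incorporated into Lemma~\ref{S3:L8-alt2} lets us pretend $\Omega=\BBR^n_+$. The purely geometric stopping-time statements — Lemma~\ref{S3:L5}, Lemma~\ref{l6}, Corollary~\ref{S3:L6} — apply to this $v$ and $w$ unchanged, so for $b=b(6a)$ and for every $x'$ lying in the good set on the left of \eqref{eq:gl2-alt2} one produces a surface ball $\Delta_r=\Delta_r(Q)$ with $x'$ near $\Delta_r$ on which $w\big(h_{\nu,a}(w)(\cdot),\cdot\big)>\nu/2$. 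Integrating this pointwise lower bound on the average and converting it into a genuine boundary integral of $\nabla u$ via Fubini and the comparability $\delta(\cdot)\approx\theta h_{\nu,a}(w)\approx r$ on $\Delta_r$ gives
\[
\nu^2\,|\Delta_r|\;\lesssim\;\int_{\Delta_r}\big|\nabla u\big(\theta h_{\nu,a}(w)(\cdot),\cdot\big)\big|^2\,dx'
\]
for $\theta$ ranging over a fixed subinterval of $[1/6,6]$.

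Next I would apply Lemma~\ref{S3:L8-alt2} on $\Delta_r$: its hypothesis $h_{\nu,a}(w)\le 2r$ on $\Delta_{2r}$ is precisely the rule by which the Whitney/Calder\'on--Zygmund decomposition of the good set selects these balls, and \eqref{TTBBMM-alt2} bounds the right-hand side of the last display by
\[
C(1+\|\mu\|^{1/2}_{\mathcal C})\|S_b(\nabla u)\|_{L^2(\Delta_{2r})}\|\tilde N_a(\nabla u)\|_{L^2(\Delta_{2r})}+\|\mu\|^{1/2}_{\mathcal C}\|\tilde N_a(\nabla u)\|^2_{L^2(\Delta_{2r})}+C\|S_b(\nabla u)\|^2_{L^2(\Delta_{2r})}+\frac{c}{r}\iint_{\mathcal K}|\nabla u|^2.
\]
On the good set the three constraints $(M(S^2_b(\nabla u)))^{1/2}\le\gamma\nu$, $\big(M(S^2_b(\nabla u))M(\tilde N^2_a(\nabla u\chi_{T(\Delta_d)}))\big)^{1/4}\le\gamma\nu$, and the new one $\big(M(\|\mu\|^{1/2}_{\mathcal C}\tilde N^2_a(\nabla u\chi_{T(\Delta_d)}))\big)^{1/2}\le\gamma\nu$ turn, in order, the $S^2$-term, the mixed $S$--$\tilde N$ term, and the new Carleson term into quantities $\lesssim\gamma^2\nu^2|\Delta_{2r}|$ (using that $M$ evaluated at a point of $\Delta_r$ dominates the average over $\Delta_{2r}$, after enlarging $m$), while the threshold $\nu>2\nu_0$ makes the corkscrew term, which by \eqref{Eqqq-25-alt2} is comparable to $r^{n-1}$ times a square average of $\nabla u$, negligible in the same sense. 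Comparing with the lower bound yields $|\Delta_r\cap(\text{good set})|\le C(\gamma)|\Delta_r|$ with $C(\gamma)\to 0$ as $\gamma\to 0$; summing over the disjoint decomposition balls — all contained in the enlargement $\Delta_{md}$ and all meeting $\{\tilde N_a(\nabla u\chi_{T(\Delta_d)})>\nu/32\}$ — produces \eqref{eq:gl2-alt2}.

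I expect the one genuinely new point, and hence the place needing care, to be the tracking of the extra term $\|\mu\|^{1/2}_{\mathcal C}\|\tilde N_a(\nabla u)\|^2_{L^2(\Delta_{2r})}$ produced by Lemma~\ref{S3:L8-alt2}: one must verify that the additional stopping condition $\big(M(\|\mu\|^{1/2}_{\mathcal C}\tilde N^2_a(\nabla u\chi_{T(\Delta_d)}))\big)^{1/2}\le\gamma\nu$ absorbs it with a genuine $\gamma^2$ gain, and that the localization of $M$ to balls inside $\Delta_{md}$ together with the cutoff $\chi_{T(\Delta_d)}$ is consistent — i.e. that $m=m(a)$ can be chosen large enough to contain the dilates $\Delta_{2r}$ of every decomposition ball touching the good set. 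Everything else (the Lipschitz bound $\|\nabla h_{\nu,a}(w)\|_{L^\infty}\le 1/a$, the scale comparisons, and the passage from averaged to genuine integrals) is identical to the argument already recorded for Lemma~\ref{LGL-loc-alt1} and in \cite{DHM}, so I would simply refer to it there.
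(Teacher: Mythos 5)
Your proposal is correct and follows exactly the route the paper intends: the paper gives no separate proof of this lemma, deferring to the good-$\lambda$ argument of Lemma~\ref{LGL-loc-alt1} and \cite{DHM} with $\nabla u$ in place of $u$ and Lemma~\ref{S3:L8-alt2} in place of Lemma~\ref{S3:L8-alt1}. You correctly identify the only genuinely new point — that the extra term $\|\mu\|^{1/2}_{\mathcal C}\|\tilde N_a(\nabla u)\|^2_{L^2(\Delta_{2r})}$ from \eqref{TTBBMM-alt2} is exactly what forces the additional stopping condition in the statement — which is also what the paper's remark after Proposition~\ref{S3:C7-alt2} confirms.
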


From this:

\begin{proposition}\label{S3:C7-alt2} Under the assumptions of Lemma \ref{LGL-loc-alt2}, for any $p>0$ and $a>0$ there exists an integer $m=m(a)\ge 2$ and a finite 
constants $K=K(n,N,\lambda,\Lambda,p,a)>0$ and $C=C(n,N,\lambda,\Lambda,p,a)>0$ such that if
$$\|\mu\|_{\mathcal C}< K,$$
then for all balls $\Delta_d\subset{\mathbb R}^{n-1}$ we have
\begin{equation}\label{S3:C7:E00ooloc-alt2}
\|\tilde{N}^r_a(\nabla u)\|_{L^{p}(\Delta_d)}\le C\|S^{2r}_a(\nabla u)\|_{L^{p}(\Delta_{md})}+Cd^{(n-1)/p}\left|\dint_{B_{\delta(A_d)/2}(A_d)}\nabla u(Z)\,dZ\right|,
\end{equation}
where $A_d$ denotes the corkscrew point of the ball $\Delta_d$.

We also have the global estimate:
\begin{equation}\label{S3:C7:E00oo-alt2}
\|\tilde{N}_a(\nabla u)\|_{L^{p}({\BBR}^{n-1})}\le C\|S_a(\nabla u)\|_{L^{p}({\BBR}^{n-1})}.
\end{equation}
\end{proposition}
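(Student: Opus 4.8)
The plan is to deduce both estimates from the localized good-$\lambda$ inequality of Lemma~\ref{LGL-loc-alt2} by exactly the distributional argument that turns a good-$\lambda$ estimate into the $L^p$ bounds of Proposition~\ref{S3:C7-alt1} (itself modeled on \cite{DHM}); the one genuinely new feature is the third ``good'' condition in \eqref{eq:gl2-alt2}, the one involving $\|\mu\|^{1/2}_{\mathcal C}\tilde N_a^2(\nabla u)$, which is what forces the smallness hypothesis $\|\mu\|_{\mathcal C}<K$. Fix $p>0$, $a>0$ and a boundary ball $\Delta_d\subset{\mathbb R}^{n-1}$. Set $g:=\tilde N^{2d}_a(\nabla u\,\chi_{T(\Delta_d)})$ and let $A_d$, $\nu_0$, $m=m(a)$, $b=b(6a)$, $\gamma$ be as in Lemma~\ref{LGL-loc-alt2}, recalling that there $S_b(\nabla u)$ is truncated at height $2d$ and $M$ runs only over balls contained in $\Delta_{md}$.

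The first step is to insert \eqref{eq:gl2-alt2} into $\|g\|^p_{L^p}=p\int_0^\infty\nu^{p-1}|\{g>\nu\}|\,d\nu$. For $\nu>2\nu_0$, Lemma~\ref{LGL-loc-alt2} bounds $|\{g>\nu\}|$ by $C(\gamma)\,|\{g>\nu/32\}|$ plus the measures of the three sets on which $(M(S^2_b(\nabla u)))^{1/2}$, $(M(S^2_b(\nabla u))M(g^2))^{1/4}$, or $(M(\|\mu\|^{1/2}_{\mathcal C}g^2))^{1/2}$ exceeds $\gamma\nu$. Multiplying by $p\nu^{p-1}$ and integrating over $(2\nu_0,\infty)$, while the leftover range $(0,2\nu_0]$ and the restriction to $T(\Delta_d)$ yield the corkscrew term $Cd^{(n-1)/p}|\dint_{B_{\delta(A_d)/2}(A_d)}\nabla u|$ (using $\nu_0\le C\,\dint_{B_{\delta(A_d)/2}(A_d)}|\nabla u|$ and Poincar\'e, as in \cite{DHM}), one is reduced to integrating the three bad sets. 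The $S_b$-set is controlled by the $L^{p/2}$-boundedness of the Hardy--Littlewood maximal function when $p>2$, and by a Kolmogorov/weak-$(1,1)$ truncation, legitimate because $M$ is localized to $\Delta_{md}$, when $0<p\le2$; this gives $C\gamma^{-p}\|S^{2d}_b(\nabla u)\|^p_{L^p(\Delta_{md})}$. The mixed set is split as $\{(M(S_b^2)M(g^2))^{1/4}>\gamma\nu\}\subset\{M(S_b^2)^{1/2}>\epsilon^2\gamma\nu\}\cup\{M(g^2)^{1/2}>\epsilon^{-2}\gamma\nu\}$ and contributes $C_\epsilon\gamma^{-p}\|S^{2d}_b(\nabla u)\|^p_{L^p(\Delta_{md})}+C\epsilon^{2p}\gamma^{-p}\|g\|^p_{L^p(\Delta_{md})}$; the new third set contributes $C\|\mu\|^{p/2}_{\mathcal C}\gamma^{-p}\|g\|^p_{L^p(\Delta_{md})}$. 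Altogether,
\begin{align*}
\|g\|^p_{L^p(\Delta_d)}&\le\big(C(\gamma)32^p+C\epsilon^{2p}\gamma^{-p}+C\|\mu\|^{p/2}_{\mathcal C}\gamma^{-p}\big)\|g\|^p_{L^p(\Delta_{md})}\\
&\quad+C_{\gamma,\epsilon}\,\|S^{2d}_b(\nabla u)\|^p_{L^p(\Delta_{md})}+Cd^{\,n-1}\Big|\dint_{B_{\delta(A_d)/2}(A_d)}\nabla u\Big|^p.
\end{align*}

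Now one fixes the parameters in order: first $\gamma$ so small that $C(\gamma)32^p<\tfrac14$; then, $\gamma$ being fixed, $\epsilon=\epsilon(\gamma)$ so small that $C\epsilon^{2p}\gamma^{-p}<\tfrac14$; finally the threshold $K=K(n,N,\lambda,\Lambda,p,a)$ so small that $CK^{p/2}\gamma^{-p}<\tfrac14$. Under $\|\mu\|_{\mathcal C}<K$ the coefficient of $\|g\|^p_{L^p(\Delta_{md})}$ is $<1$, and since $\|g\|_{L^p}$ is a priori finite (this is the point of truncating at height $2d$ and of $\nabla u\in L^2_{\rm loc}$) this term is absorbed into the left-hand side; taking $p$-th roots and passing from $S_b$ to $S_a$ by the standard aperture comparison for square functions yields \eqref{S3:C7:E00ooloc-alt2}, the general truncation height $r$ being treated identically, as in \cite{DHM}. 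For the global estimate \eqref{S3:C7:E00oo-alt2} one lets $d\to\infty$ in \eqref{S3:C7:E00ooloc-alt2}: since $u$ is an energy solution, $\nabla u\in L^2(\Omega)$, so the averages of $\nabla u$ over the balls $B_{\delta(A_d)/2}(A_d)$, whose radii tend to infinity, go to $0$ while $m$ stays fixed, killing the corkscrew term; alternatively one runs the same argument with the global form of the good-$\lambda$ inequality, exactly as for Proposition~\ref{S3:C7-alt1}.

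The main obstacle is the third term $C\|\mu\|^{p/2}_{\mathcal C}\gamma^{-p}\|g\|^p_{L^p}$ above. Unlike the computation behind Proposition~\ref{S3:C7-alt1}, whose good-$\lambda$ inequality \eqref{eq:gl2} carries no Carleson-type ``good'' condition, here this term is proportional to the very quantity being estimated (rather than to $\|S_b(\nabla u)\|^p_{L^p}$), so it can only be moved to the left-hand side --- which is the structural reason the quantitative smallness $\|\mu\|_{\mathcal C}<K$, with $K$ depending on $p$ and $a$, is unavoidable in this proposition. The remaining delicate points are ensuring the a priori finiteness of $\|g\|_{L^p}$ before absorption and checking that the prescribed order of parameter choices ($\gamma$, then $\epsilon$, then $K$) is self-consistent and that their dependence on $p$ and $a$ does not circularly affect the final constants.
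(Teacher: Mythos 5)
Your proposal is correct and follows essentially the same route as the paper, which gives no detailed proof here: it simply derives the proposition from the good-$\lambda$ inequality of Lemma \ref{LGL-loc-alt2} by the standard distributional integration argument (as for Proposition \ref{S3:C7-alt1} and in \cite{DHM}), and remarks that the extra term $\big(M(\|\mu\|^{1/2}_{\mathcal C}\tilde{N}_a^2(\nabla u\chi_{T(\Delta_d)}))\big)^{1/2}$ is precisely what forces the smallness $\|\mu\|_{\mathcal C}<K$ so that its contribution can be absorbed into the left-hand side. Your fleshing out of the three bad sets, the parameter ordering ($\gamma$, then $\epsilon$, then $K$), and the a priori finiteness needed for absorption is exactly the intended argument (the only immaterial slip is that the third set contributes $\|\mu\|_{\mathcal C}^{p/4}$ rather than $\|\mu\|_{\mathcal C}^{p/2}$, which changes nothing).
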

The condition $\|\mu\|_{\mathcal C}$ being small in this Proposition is needed due to the presence of the term $\big(M(\|\mu\|^{1/2}_{\mathcal C}\tilde{N}_a^2(\nabla u\chi_{T(\Delta_d)}))\big)^{1/2}\leq\gamma\nu$ in the good-$\lambda$
inequality (Lemma \ref{LGL-loc-alt2}). This term can be absorbed by the lefthand side of \eqref{S3:C7:E00ooloc-alt2} when $\mu$ has small Carleson norm.

\section{Proof of Theorem \ref{S3:T0}}\label{S6}

We start with $p=2$.
Let ${\mathcal L}_1$
be an operator on domain $\Omega$ whose coefficients satisfy assumptions of Theorem  \ref{S3:T0}. Denote these coefficients by $A$, $B$ so that we have that
\begin{equation}\label{oscT1a}
d\mu_1 =  \left( \delta(x)^{-1}\left(\mbox{osc}_{B_{\delta(x)/2}(x)}A\right)^2 + \sup_{B_{\delta(x)/2}(x)} |B|^2 \delta(x)\right) dx
\end{equation}
is a Carleson measure with norm $\|\mu_1\|_{\mathcal C}$ and $A$ is strongly elliptic.

As discussed in section 2.4 of \cite{DHM} the problem can be reduced to the case $\Omega=\mathbb R^n_+$
(via a pullback map $\rho:\Omega\to \mathbb R^n_+$). Slightly abusing the notation we still call it ${\mathcal L}_1$ and its coefficients are still the tensors $A$ and $B$. The pullback preserves strong ellipticity but might disrupt the assumption $(i)$ that ${A}_{0j}^{\alpha\beta}=\delta_{\alpha\beta}\delta_{0j}$. However, if the Lipschitz norm $L$ of the map $\phi$ whose graph defines the boundary of $\partial\Omega$ is small we get that the new coefficients satisfy
\begin{equation}\label{eqS6.1}
|{A}_{0j}^{\alpha\beta}-\delta_{\alpha\beta}\delta_{0j}|<\varepsilon
\end{equation}
for some small $\varepsilon$ (depending on $L$).

Consider a new operator ${\mathcal L}$ whose coefficients are
defined as follows. 
Set
$$\bar{A}_{ij}^{\alpha\beta}(x_0,x')= \iint_{{\mathbb R}^n_+} A_{ij}^{\alpha\beta}(s,u)\varphi_{t}(s-t,x'-u)dsdu,$$

where $\varphi$ is a smooth real, nonnegative bump function on ${\mathbb R}^n$ supported in
the ball  $B_{1/2}(0)$ such that $\iint \varphi=1$ and
$\varphi_t(s,y) = t^{-n}\varphi(s/t,y/t)$. We also set $\bar B=B$.

The Carleson norms of
\begin{equation}\label{Car_hatAA-X}
d{\mu}(x)=\sup_{B_{\delta(x)/2}(x)}\left[|\nabla{\bar A}|^{2} + |\bar B|^{2} \right]\delta(x)\,dx
\end{equation}
and 
\begin{equation}\label{eqdm-X}
 dm(x)=\sup_{B_{\delta(x)/2}(x)}\left[|\bar A - A|^{2} \delta^{-1}(x)+ |\bar B - B|^{2} \delta(x)\right]\,dx
\end{equation}
satisfy, by the same arguments as in \cite[Corollary 2.3]{DPP}, the following bounds:
$$\|\mu\|_{\mathcal C}+\|m\|_{\mathcal C}\le C\|\mu_1\|_{\mathcal C},$$
for some $C=C(n,N,\phi)\ge 1$.

Because \eqref{eqS6.1} holds the same is true for the coefficients $\bar A^{\alpha\beta}_{0j}$. Hence we can apply the transformation of section 2.3 of \cite{DHM} to get a new operator the satisfies $(i)$ of Theorem \ref{S3:T0}.
Also since $\varepsilon$ in \eqref{eqS6.1} is small, (as $L$ is) this transformation preserves strong ellipticity of the matrix. Hence without loss of generality we may assume that 
$$\bar{A}_{0j}^{\alpha\beta}=\delta_{\alpha\beta}\delta_{0j}.$$
It follows we are in the situation we can apply Lemma \ref{S3:L4-alt1}, Corollary \ref{S4:C1-alt1} and Proposition \ref{S3:C7-alt1}.
It follows by these three results that for any energy solution $u$ of ${\mathcal L}_1=0$ we have
\begin{align}
&\int_{\BBR^{n-1}}\left[\tilde{N}_{a}(u)\right]^{p}\,dx'\le C_1\int_{\BBR^{n-1}}\left[{S}_{a}(u)\right]^{p}\,dx'\\\nonumber
&\le C_1C_2\left(\int_{\BBR^{n-1}}|u(0,x')|^{2}\,dx'
+(\|\mu\|_{\mathcal{C}}+\|m\|_{\mathcal{C}})\int_{\BBR^{n-1}}\left[\tilde{N}_a(u)\right]^{2}\,dx'\right)\\\nonumber
&\le C_1C_2C_3\left(\int_{\BBR^{n-1}}|u(0,x')|^{2}\,dx'
+\|\mu_1\|_{\mathcal{C}}\int_{\BBR^{n-1}}\left[\tilde{N}_a(u)\right]^{2}\,dx'\right)
\end{align}
From this if $C_1C_2C_3\|\mu_1\|_{\mathcal{C}}<1/2$ we obtain \eqref{Main-Est} which gives us $L^2$ solvability of the Dirichlet problem for the operator ${\mathcal L}_1$.\vglue1mm

Once we have the result for $p=2$ then again as in \cite{DHM} we obtain solvability in the range 
$2-\varepsilon<p<\frac{2(n-1)}{(n-2)}+\varepsilon$ by extrapolation. We refer the reader to the section 6 of \cite{DHM} where this is described in detail. For the interval $p\in (2,\frac{2(n-1)}{(n-2)}+\varepsilon)$ the proof uses Theorem 1.2 of \cite{S4} while the interval $p\in (2-\varepsilon,2)$ is handled by a real variable argument.

\section{Proof of Theorem \ref{S3:T3}}

The proof of Theorem \ref{S3:T3} is fairly straightforward and is similar to the argument given in the previous section. An operator ${\mathcal L}$ as in \eqref{ES-4} on domain $\Omega$ is rewritten as
\begin{equation}\label{ES-2-fd}
\mathcal{L}u=\left[ \partial_{i} \left(A_{ij}^{\alpha \beta}(x) \partial_{j} u_{\beta}\right)
+B_{i}^{\alpha \beta}(x) \partial_{i}u_{\beta}\right]_{\alpha}
\end{equation}
with $A_{0j}^{\alpha\beta}=\delta_{\alpha\beta}\delta_{0j}$ and $A$, $B$ satisfying the small Carleson condition and strong ellipticity. Then by the same argument as in the previous section the matters can be reduced to $\Omega={\mathbb R}^n_+$. 

It follows we are in the situation where we can apply Lemma \ref{S3:L4-alt2} and obtain 
$$\int_{\BBR^{n-1}}\left[{S}_{a}(\nabla u)\right]^{p}\,dx'
\le C_1\left(\int_{\BBR^{n-1}}|\nabla_T u(0,x')|^{2}\,dx'
+\|\mu\|_{\mathcal{C}}\int_{\BBR^{n-1}}\left[\tilde{N}_a(\nabla u)\right]^{2}\,dx'\right).$$

Next we want to apply Proposition \ref{S3:C7-alt2} but this proposition was only established for operators without first order terms. We therefore go to our original operator \eqref{ES-4} on domain $\Omega$ and use the pullback map $\rho$ mentioned earlier (c.f. section 2.4 of \cite{DHM}). This gives us a new operator on ${\mathbb R}^n_+$ still of the form \eqref{ES-4} and hence Proposition \ref{S3:C7-alt2} applies to it. This gives
$$\int_{\BBR^{n-1}}\left[\tilde{N}_{a}(\nabla u)\right]^{p}\,dx'\le C_2\int_{\BBR^{n-1}}\left[{S}_{a}(\nabla u)\right]^{p}\,dx'.$$
Combining these we finally have
\begin{align}\nonumber
&\int_{\BBR^{n-1}}\left[\tilde{N}_{a}(\nabla u)\right]^{p}\,dx'\le C_1C_2\left(\int_{\BBR^{n-1}}|\nabla_T u(0,x')|^{2}\,dx'
+\|\mu\|_{\mathcal{C}}\int_{\BBR^{n-1}}\left[\tilde{N}_a(\nabla u)\right]^{2}\,dx'\right).
\end{align}
From this if $C_1C_2\|\mu_1\|_{\mathcal{C}}<1/2$ we obtain \eqref{Main-Estxx} which implies $L^2$ solvability of the Regularity problem for the operator ${\mathcal L}$.\vglue1mm

We may also establish a local version of the calculation given above.
Consider domains of the following the form. Let $\Delta_d\subset{\mathbb R}^{n-1}$
be a boundary ball or a cube or diameter $d$. We denote by ${\mathcal O}_{\Delta_d,a}$ 
\begin{equation}\label{Odom}
{\mathcal O}_{\Delta_d,a}=\bigcup_{Q\in\Delta_d}\Gamma_a(Q).
\end{equation}

\begin{lemma}\label{S7:L1} Let $\mathcal L$ be as in Theorem \ref{S3:T3} on the domain ${\mathbb R}^n_{+}$. There exists large $a>0$ with the following property.
If $\mathcal O$ is a Lipschitz domain defined by \eqref{Odom} and $u$ is any energy solution of $\mathcal L u=0$ with the Dirichlet boundary datum $\nabla_T f\in L^{2}(\partial \mathcal O;{\BBR}^N)$ then the following estimate holds:
\begin{equation}\label{Main-Estlocxx2bb}
\|\tilde{N}_{a/2} (\nabla u)\|_{L^{2}(\Delta_d)}\leq C\|\nabla_T f\|_{L^{2}(\partial{\mathcal O\cap \overline{T(\Delta_{md})}};{\BBR}^N)}+Cd^{(n-1)/2}\sup_{x\in \mathcal O\cap\{\delta(x)>d\}}W(x),
\end{equation}
where $\delta(x)=\mbox{dist}(x,\partial{\mathbb R}^n_+)$, $W(x)=\left(\dint_{B_{\delta(x)/4}(x)}|\nabla u(y)|^2 dy)\right)^{1/2}$ and $m=m(a)>1$ is sufficiently large.
\end{lemma}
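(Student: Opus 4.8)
The plan is to re-run the proof of Theorem~\ref{S3:T3} at a finite scale, tracking all truncated quantities and using the \emph{local} square-function and reverse estimates in place of the global ones. First observe that the sawtooth region $\mathcal O$ in \eqref{Odom} is precisely the graph domain $\{x_0>\psi(x')\}$ with $\psi(x')=a^{-1}\dist(x',\Delta_d)$, so $\|\nabla\psi\|_{L^\infty}=1/a$ is small once $a$ is taken large. Hence, exactly as at the start of the proofs of Theorems~\ref{S3:T0} and \ref{S3:T3}, I would apply the pullback $\rho:\mathcal O\to{\mathbb R}^n_+$ of Section~2.4 of \cite{DHM}, followed by the normalization of Section~2.3 of \cite{DHM} restoring $A^{\alpha\beta}_{0j}=\delta_{\alpha\beta}\delta_{0j}$ (legitimate since $\rho$ perturbs the $A_{0j}$ block only by $O(1/a)$ and strong ellipticity is stable under such a perturbation). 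This yields on ${\mathbb R}^n_+$ an operator of exactly the type in Theorem~\ref{S3:T3}, with Carleson norm $\lesssim\|\mu\|_{\mathcal C}$, and --- by the same maneuver used in the proof of Theorem~\ref{S3:T3} --- one keeps available the second-order-only form \eqref{ES-4} so that Proposition~\ref{S3:C7-alt2} still applies after a further pullback. Under $\rho$, $\nabla_T f$ on $\partial\mathcal O$ corresponds to a tangential trace on ${\mathbb R}^{n-1}$ with comparable $L^2$ norms over corresponding surface balls, and $\tilde N$, $S$ with vertices on $\partial\mathcal O$ restricted to $\Delta_d$ become their flat counterparts over a comparable ball (up to harmlessly enlarging the aperture). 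So it suffices to prove the flat-model analogue of \eqref{Main-Estlocxx2bb}.

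For $Q\in\Delta_d$ I would split $\Gamma_{a/2}(Q)$ into its truncation $\Gamma^{d}_{a/2}(Q)$ and its complement; since $\Gamma_{a/2}(Q)\subset\Gamma_a(Q)\subset\mathcal O$, every point of the complement lies in $\mathcal O\cap\{\delta>d\}$, so
\begin{equation*}
\|\tilde N_{a/2}(\nabla u)\|_{L^2(\Delta_d)}\le\|\tilde N^{d}_{a/2}(\nabla u)\|_{L^2(\Delta_d)}+Cd^{(n-1)/2}\sup_{x\in\mathcal O,\ \delta(x)>d}W(x),
\end{equation*}
reducing matters to the truncated maximal function. For that I would chain the local estimates of Sections~\ref{S4}--\ref{SS:43}: \eqref{S3:C7:E00ooloc-alt2} (with $p=2$, applied to a fixed dilate of $\Delta_d$ so that its corkscrew point sits above height $d$) bounds $\|\tilde N^{d}_{a/2}(\nabla u)\|_{L^2(\Delta_d)}$ by $C\|S^{2d}_a(\nabla u)\|_{L^2(\Delta_{md})}$ plus $Cd^{(n-1)/2}$ times a corkscrew average of $\nabla u$ at height comparable to $d$, which is again absorbed into the $d^{(n-1)/2}\sup_{\delta>d}W$ term. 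Next, $\|S^{2d}_a(\nabla u)\|^2_{L^2(\Delta_{md})}\approx\iint_{T(\Delta_{Cmd})}|\nabla^2 u|^2 x_0$, and I would bound this solid integral via \eqref{eq5.15-alt2} of Lemma~\ref{S3:L4-alt2} at radius $\sim Cmd$: the bottom term $\int_B|\nabla_T u(0,\cdot)|^2$ becomes $\lesssim\|\nabla_T f\|^2_{L^2(\partial{\mathcal O}\cap\overline{T(\Delta_{m'd})})}$ after undoing $\rho$ (with $m'=m'(a)$, playing the role of $m$ in \eqref{Main-Estlocxx2bb}), the top-slice term $\int_B|\nabla_T u(R,\cdot)|^2$ at height $R\sim Cmd$ is controlled through the interior Caccioppoli inequality by $C(m'd)^{n-1}\sup_{\delta>d}W^2$, and there remains only the self-referential term $C\|\mu\|_{\mathcal C}\|\tilde N^{R}_a(\nabla u)\|^2_{L^2(\Delta_{m'd})}$.

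The delicate point is to absorb this last term, which lives on a ball $\Delta_{m'd}$ strictly larger than $\Delta_d$. Since the previous step in fact produces an inequality of the same shape for every surface ball of every radius, I would iterate: cover $\Delta_{m'd}$ by boundedly overlapping radius-$d$ balls, apply the inequality on each, and sum; because $\|\mu\|_{\mathcal C}$ is small (with the threshold allowed to depend on the now-fixed aperture $a$, hence on $m'$), the iterated $\tilde N$-contributions form a geometric series whose sum is absorbed into the left side at the cost of further enlarging $m$, while the data terms stay supported in $\overline{T(\Delta_{md})}$ and the tail terms stay bounded by $d^{(n-1)/2}\sup_{\delta>d}W$. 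The bounded overlap is available because the cones $\Gamma_{a/2}$ open strictly inside $\Gamma_a$, so the relevant enlarged regions overlap uniformly. Collecting terms and undoing $\rho$ then gives \eqref{Main-Estlocxx2bb}; the interior Caccioppoli bound on the top-slice term and the verification that $\rho$ distorts the various quantities only by fixed constants are routine, so I expect the scale-localized absorption to be the only genuinely delicate step.
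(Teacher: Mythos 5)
Your overall architecture (pullback of the sawtooth $\mathcal O$ to $\mathbb R^n_+$, chaining the local reverse estimate \eqref{S3:C7:E00ooloc-alt2} with the local square-function bound \eqref{eq5.15-alt2}, disposing of the corkscrew and top-slice terms via $\sup_{\delta>d}W$) coincides with the paper's up to the last step. The genuine gap is in your absorption of the leftover term $C\|\mu\|_{\mathcal C}\|\tilde N(\nabla u)\|^2_{L^2(\Delta_{m'd})}$. Your scheme --- cover $\Delta_{m'd}$ by radius-$d$ balls, apply the same inequality on each, and sum a geometric series --- does not close: each application on a ball centered in $\Delta_{m'd}$ produces a data term and a new self-referential term supported on an $m'$-fold dilate of \emph{that} ball, so after $k$ iterations the supports have grown to $\Delta_{\sim (m')^k d}$. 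Nothing at scale $(m')^k d$ can be "absorbed into the left side," which is an $L^2$ norm over the fixed ball $\Delta_d$; and the accumulated data terms sum to (at best) the \emph{global} norm $\|\nabla_T f\|_{L^2(\partial\mathcal O)}$, not the localized norm $\|\nabla_T f\|_{L^2(\partial\mathcal O\cap\overline{T(\Delta_{md})})}$ that \eqref{Main-Estlocxx2bb} asserts. That localization is essential for the subsequent Whitney/good-$\lambda$ argument, and it cannot be recovered from the $\sup_{\delta>d}W$ term since the far portion of $\partial\mathcal O$ has infinite surface measure.

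What the paper does instead is a single self-improvement, not an iteration: it applies \eqref{S3:C7:E00ooloc-alt2} once more to convert the leftover $\tilde N$ term back into the solid integral $\int_{T(\Delta_{8m^2d})}|\nabla^2u|^2\delta\,dx$ (see \eqref{altsim3}), splits this into the piece over $T(\Delta_{md})$ --- which is literally the quantity being estimated and is hidden for small $\|\mu\|_{\mathcal C}$ --- and the annular piece over $T(\Delta_{8m^2d})\setminus T(\Delta_{md})$. The annulus is then killed by the Caccioppoli inequality for the second gradient (Proposition \ref{Ciac2}) together with the geometric inclusion \eqref{brmbrm}, which guarantees that every point of the annulus lies in the interior of the original sawtooth $\mathcal O$ at distance $\gtrsim d$ from $\partial\mathcal O$, hence is controlled by $d^{n-1}\sup_{\delta>d}W^2$ (this is \eqref{cacc}, yielding \eqref{altsim4}). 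These two ingredients --- Proposition \ref{Ciac2} and the inclusion \eqref{brmbrm} applied to the annulus --- are exactly what terminate the absorption while keeping the data term supported in $\overline{T(\Delta_{md})}$, and they are absent from your argument.
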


\begin{proof} The lemma can be established by combining the local results 
\eqref{eq5.15-alt2} and \eqref{S3:C7:E00ooloc-alt2}. 
In last term of \eqref{Main-Estlocxx2bb} because of the way $\mathcal O$ is defined we clearly have
\begin{equation}\label{brmbrm}
\{(x_0,x')\in\mathcal O:\, x'\notin\Delta_{(1+a)d}\}\subset  \mathcal O\cap\{\delta(x)>d\}.
\end{equation}
It follows that again by considering the pullback map $\rho:{\mathbb R}^n_+\to\mathcal O$ of section 2.4 of \cite{DHM} proving \eqref{Main-Estlocxx2bb} is equivalent to establishing
\begin{equation}\label{Main-Estlocxxy}
\|\tilde{N} (\nabla u)\|_{L^{2}(\Delta_d)}\leq C\|\nabla_Tf\|_{L^{2}(\Delta_{md};{\BBR}^N)}+Cd^{(n-1)/2}\sup_{x\in {\mathbb R}^n_+\setminus T(\Delta_{(1+a)d})}W(x),
\end{equation}
where we now work on the domain ${\mathbb R}^n_+$ with $u$ solving $\mathcal Lu=0$ in ${\mathbb R}^n_+$ for $\mathcal L$ as in Theorem \ref{S3:T3}. We start with the term on the lefthand side of \eqref{Main-Estlocxxy}. It follows from \eqref{S3:C7:E00ooloc-alt2} that
\begin{equation}\label{S3:C7:E00ooloc-2}
\|\tilde{N}^{(1+a)d}(\nabla u)\|^2_{L^{2}(\Delta_d)}\le C\int_{T(\Delta_{md})}|\nabla^2 u|^2\delta(x)\,dx +Cd^{n-1}|(\nabla u)_{av}(A_d)|^2.
\end{equation}
The last term above has a trivial bound by $Cd^{n-1}\sup_{x\in {\mathbb R}^n_+\setminus T(\Delta_{(1+a)d})}[W(x)]^2$. To estimate the first term on the righthand side of \eqref{S3:C7:E00ooloc-2} we use \eqref{eq5.15-alt2}. This gives
\begin{eqnarray}\label{altsim}
&&\int_{T(\Delta_{md})}|\nabla^2 u|^2\delta(x)\,dx\\
&\lesssim& 
\int_{\Delta_{4md}}|\nabla_Tu(0,x')|^{2}\,dx' 
+\int_{\Delta_{4md}}|\nabla_Tu(2md,x')|^{2}\,dx'\nonumber\\
&+&\|\mu\|_{\mathcal{C}}\int_{\Delta_{4md}}\left[\tilde{N}^{2md}(\nabla u)\right]^{2}\,dx'.\nonumber
\end{eqnarray}
The second term in the last line can be estimated by $Cd^{n-1}\sup_{x\in {\mathbb R}^n_+\setminus T(\Delta_{(1+a)d})}[W(x)]^2$
using an averaging procedure. By varying $d$ in \eqref{altsim} between say $d_0$ to $2d_0$ the second term
turns into a solid integral over a set that is contained in ${\mathbb R}^n_+\setminus T(\Delta_{(1+a)d})$ and hence the estimate holds. This gives
\begin{eqnarray}\label{altsim2}
&&\int_{T(\Delta_{md})}|\nabla^2 u|^2\delta(x)\,dx\\
&\lesssim& 
\int_{\Delta_{8md}}|\nabla_Tf(x')|^{2}\,dx' 
+\|\mu\|_{\mathcal{C}}\int_{\Delta_{8md}}\left[\tilde{N}^{4md}(\nabla u)\right]^{2}\,dx'\nonumber\\ &+&d^{n-1}\sup_{x\in {\mathbb R}^n_+\setminus T(\Delta_{(1+a)d})}[W(x)]^2.\nonumber
\end{eqnarray}
Finally for the second term in the middle line we again use  \eqref{S3:C7:E00ooloc-alt2}. We get
\begin{eqnarray}\label{altsim3}
&&\int_{T(\Delta_{md})}|\nabla^2 u|^2\delta(x)\,dx\\
&\lesssim& 
\int_{\Delta_{8md}}|\nabla_T f(x')|^{2}\,dx' 
+\|\mu\|_{\mathcal{C}}\int_{T(\Delta_{8m^2d})}|\nabla^2 u|^2\delta(x)\,dx\nonumber\\
&+&d^{n-1}\sup_{x\in {\mathbb R}^n_+\setminus T(\Delta_{(1+a)d})}[W(x)]^2.\nonumber
\end{eqnarray}
For sufficiently small $\|\mu\|_{\mathcal{C}}$ we can hide part of the second term in the last line on the righthand side of \eqref{altsim3}. Hence
\begin{eqnarray}\label{altsim4}
&&\int_{T(\Delta_{md})}|\nabla^2 u|^2\delta(x)\,dx \lesssim 
\int_{\Delta_{8md}}|\nabla_Tf(x')|^{2}\,dx' \\
&+&\|\mu\|_{\mathcal{C}}\int_{T(\Delta_{8m^2d})\setminus T(\Delta_{md})}|\nabla^2 u|^2\delta(x)\,dx
+d^{n-1}\sup_{x\in {\mathbb R}^n_+\setminus T(\Delta_{(1+a)d}))}[W(x)]^2.\nonumber
\end{eqnarray}
We claim that by the Caccioppoli inequality for the second gradient (Proposition \ref{Ciac2}) we have
\begin{equation}\label{cacc}
\int_{T(\Delta_{8m^2d})\setminus T(\Delta_{md})}|\nabla^2 u|^2\delta(x)\,dx
\lesssim d^{n-1}\sup_{x\in {\mathbb R}^n_+\setminus T(\Delta_{(1+a)d}))}[W(x)]^2.
\end{equation}
This is obvious on the set $T(\Delta_{8m^2d})\cap\{\delta(x)\ge d\}$ which is clearly in the interior of ${\mathbb R}^n_+$. However, let us recall \eqref{brmbrm}. It follows that all points of $T(\Delta_{8m^2d})\setminus T(\Delta_{md})$ are in the interior of the original domain ${\mathcal O}$ and hence we can use Caccioppoli inequality in the original domain.

Finally, by combining \eqref{S3:C7:E00ooloc-2}, \eqref{altsim4} and \eqref{cacc} we see that \eqref{Main-Estlocxxy} holds. We can remove the truncation of $\tilde N$ at height $(1+a)d$ in \eqref{S3:C7:E00ooloc-2} as for points above this height the term $d^{n-1}\sup_{x\in {\mathbb R}^n_+\setminus T(\Delta_{(1+a)d}))}[W(x)]^2$ controls the nontangential maximal function.
\end{proof}

\vglue1mm

Next, we set
$$E_{\nu}=\{x'\in{\mathbb R}^{n-1}:\,\tilde{N_\alpha}(\nabla u)(x')>\nu \}.$$
Here, $\alpha>0$ (much larger than $a$) is determined later. With $f=u\big|_{\partial{\mathbb R}^n_+}$
we denote by $g$
$$g(x')=\sup_{B\ni x'}\left(\dint_{B}|\nabla_T f|^2(y')dy'\right)^{1/2},$$
for all $x'\in{\mathbb R}^{n-1}$ where the supremum is taken over all boundary balls $B$ containing $x$.

We now prove the following estimate which is an analogue of (2.15) from \cite{DK}.
\begin{equation}\label{Main-Estloc5zz}
\int_{E_\nu\cap\{g\le\nu\}}\left[\tilde{N} (\nabla u)(x')\right]^2\,dx'\le C\nu^2|E_\nu| +C\alpha^{-1}\int_{E_\nu}\left[\tilde{N} (\nabla )(x')\right]^2\,dx'.
\end{equation}

Let $(\Delta^i)$ be the Whitney decomposition of $E_\nu$ with the property that $2m\Delta^i\subset E_\nu$ and
$2m\Delta^i$ have finite overlaps. Here $m$ is chosen as in Lemma \ref{S7:L1}.
We look at those Whitney cubes such that
$$F^i=\Delta^i\cap \{x':\, g(x')\le \nu\}\ne\emptyset.$$
Since $\Delta^i$ is the Whitney cube there exists a point $x_i\in{\mathbb R}^{n-1}\setminus E_\nu$ with 
$$\mbox{dist}(x_i,\Delta^i)\le C_n\mbox{ diam}(\Delta^i).$$

For $1<\tau<2$ consider the Lipschitz domains
$$\Omega_\tau={\mathcal O}_{\tau\Delta^i,a}$$ 
where $\tau\Delta^i$ is an enlargement of $\Delta^i$ by factor of $\tau$ and $a$ was chosen earlier (so that the solvability of $\Omega_\tau$ holds). Set $A_\tau=\partial\Omega_\tau\cap \Gamma_\alpha(x_i)$, $B_\tau=(\partial\Omega_\tau\cap {\mathbb R}^n_+)\setminus \Gamma_\alpha(x_i)$.

Because of the choices we have made for $\tau\in (1,2)$ the height of $B_\tau$ is bounded, namely we have
\begin{equation}\label{hhh}
h:=\sup\{y_0:\, (y_0,y')\in B_\tau\}\le C_n \alpha^{-1} \mbox{ diam}(\Delta^i).
\end{equation}
Since $F^i\ne\emptyset$ we have
\begin{equation}\label{xnfrk}
\int_{2m\Delta^i}|\nabla_Tf(x')|^2dx'\lesssim \int_{2m\Delta^i}|g(x')|^2dx'\lesssim \nu^2|\Delta^i|.
\end{equation}
It follows by Lemma \ref{S7:L1} for each $\Omega_\tau$ we have by \eqref{Main-Estlocxxy}
\begin{equation}\label{Main-Estloc2}
\|\tilde{N} (\nabla u)\|^2_{L^{2}(\Delta^i)}\leq C\|\nabla_Tf\|^2_{L^{2}(\partial\Omega_\tau\cap \overline{T(2m\Delta^i)};{\BBR}^N)}+Cd^{n-1}\sup_{x\in {\Omega}_\tau\cap\{\delta(x)>d\}}[W(x)]^2.
\end{equation}
Here $d=\diam(\Delta^i)$ and $\tilde{N}$ is defined using cones $\Gamma_b$ (see above).
We deal with the terms on the righthand side. Firstly, for sufficiently large $\alpha>0$ we have ${\Omega}_\tau\cap\{\delta(x)>d\}\subset\Gamma_\alpha(x_i)$ and hence 
$$d^{n-1}\sup_{x\in {\Omega}_\tau\cap\{\delta(x)>d\}}[W(x)]^2\lesssim \nu^2|\Delta^i|.$$
The boundary $\partial\Omega_\tau$ consists of three pieces, $A_\tau$, $B_\tau$ and $\partial\Omega_\tau\cap {\mathbb R}^{n-1}\subset 2m\Delta^i$, for the last piece we already have the estimate \eqref{xnfrk}. Hence by 
\eqref{Main-Estloc2}
\begin{equation}\label{Main-Estloc3}
\|\tilde{N} (\nabla u)\|^2_{L^{2}( \Delta^i)}\le C\|\nabla u\|^2_{L^{2}(A_\tau\cap T(2m\Delta^i))}+C\|\nabla u\|^2_{L^{2}(B_\tau)}+C\nu^2|\Delta^i|.
\end{equation}
We integrate \eqref{Main-Estloc3} in $\tau$ over the interval $(1,2)$ in $\tau$. Since $A_\tau\subset \Gamma_\alpha(x_i)$ integrating
in $\tau$ turns this into a solid integral which has the following estimate
$$\int_1^2\|\nabla u\|^2_{L^{2}(A_\tau)}d\tau \lesssim d^{-1}\iint_{\bigcup_\tau A_\tau}|\nabla u(x)|^2dx\lesssim
d^{-1}\iint_{\Gamma_\alpha(x_i)\cap T(2m\Delta^i)}|\nabla u(x)|^2dx,$$
with the last term bounded by $C \nu^2|\Delta^i|$. We have a similar estimate for $B_\tau$. 
$$\int_1^2\|\nabla u\|^2_{L^{2}(B_\tau)}d\tau \lesssim d^{-1}\iint_{\bigcup_\tau B_\tau}|\nabla u(x)|^2dx\lesssim
d^{-1}\iint_{T(2m\Delta^i)\cap \{x\le h\}}|\nabla u(x)|^2dx.$$
However thanks to \eqref{hhh} we conclude
$$d^{-1}\iint_{T(2m\Delta^i)\cap \{x\le h\}}|\nabla u(x)|^2dx\lesssim d^{-1}\alpha^{-1}d\int_{2m{\Delta^i}}|\tilde{N}(\nabla u)(x')|^2dx'.$$

Putting all terms together yields 
\begin{equation}\label{Main-Estloc4}
\|\tilde{N} (\nabla u)\|^2_{L^{2}( \Delta^i)}\le C\nu^2|\Delta^i| +C\alpha^{-1}\|\tilde{N} (\nabla u)\|^2_{L^{2}( 2m\Delta^i)}.
\end{equation}
 Summing over all indices $i$ (using finite overlap of the Whitney cubes $(2m\Delta^i)$) finally yields
 \eqref{Main-Estloc5zz}.

From this as in \cite{DK} one can conclude that there exists $\delta_0>0$ such that for all $0<\delta<\delta_0$ there is $C(\delta)>0$ such that
\begin{equation}\label{eqLP2}
\int_{{\mathbb R}^{n-1}}\left[\tilde{N} (\nabla u)(x')\right]^{2+\delta}\,dx'\le C\int_{{\mathbb R}^{n-1}}|\nabla_T f(x')|^{2+\delta}\,dx'.
\end{equation}
From this $L^{2+\delta}$ solvability of the Regularity problem in Theorem \ref{S3:T3} follows. We claim this extrapolation result also improves Lemma \ref{S7:L1}. That is we have
\begin{lemma}\label{S7:L2} Let $\mathcal L$ be as in Theorem \ref{S3:T3} on the domain ${\mathbb R}^n_{+}$. There exists large $a>0$ such that for  $2\le p<2+\delta$ we have the following.
If $\mathcal O$ is a Lipschitz domain defined by \eqref{Odom} and $u$ is any energy solution of $\mathcal L u=0$ with the Dirichlet boundary datum $\nabla_T f\in L^{p}(\partial \mathcal O;{\BBR}^N)$ then the following estimate holds:
\begin{equation}\label{Main-Estlocxx2q}
\|\tilde{N}_{a/2} (\nabla u)\|_{L^{p}(\Delta_d)}\leq C\|\nabla_T f\|_{L^{p}(\partial{\mathcal O\cap \overline{T(\Delta_{md})}};{\BBR}^N)}+Cd^{(n-1)/p}\sup_{x\in \mathcal O\cap\{\delta(x)>d\}}W(x),
\end{equation}
where $\delta(x)=\mbox{dist}(x,\partial{\mathbb R}^n_+)$, $W(x)=\left(\dint_{B_{\delta(x)/4}(x)}|\nabla u(y)|^2 dy)\right)^{1/2}$ and $m=m(a)>1$ is sufficiently large.
\end{lemma}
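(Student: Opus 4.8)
The plan is to reproduce the proof of Lemma~\ref{S7:L1} with every $L^2$ estimate upgraded to an $L^p$ estimate for $2\le p<2+\delta$, feeding in the improved global regularity \eqref{eqLP2} in place of the $L^2$ solvability. As there, I would first use the pullback $\rho\colon{\mathbb R}^n_+\to\mathcal O$ of section~2.4 of \cite{DHM} together with the inclusion \eqref{brmbrm} to reduce to the flat statement on ${\mathbb R}^n_+$: for $u$ solving $\mathcal Lu=0$ in ${\mathbb R}^n_+$ with $f=u|_{{\mathbb R}^{n-1}}$ one has $\|\tilde N(\nabla u)\|_{L^p(\Delta_d)}\le C\|\nabla_Tf\|_{L^p(\Delta_{md})}+Cd^{(n-1)/p}\sup_{x\in{\mathbb R}^n_+\setminus T(\Delta_{(1+a)d})}W(x)$ for $m=m(a)$ large. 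Interpolating \eqref{eqLP2} with the $L^2$ solvability of the Regularity problem already established in this section (applied to the sublinear solution map $\nabla_Tf\mapsto\tilde N_a(\nabla u)$) then gives the global bound $\|\tilde N_a(\nabla u)\|_{L^p({\mathbb R}^{n-1})}\le C\|\nabla_Tf\|_{L^p({\mathbb R}^{n-1})}$ for $2\le p<2+\delta$; this is the only new ingredient.

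I would then localise in a Dahlberg--Kenig fashion. Choose a cutoff $\eta$ equal to $1$ on $\Delta_{md/2}$, supported in $\Delta_{md}$, with $|\nabla_T\eta|\lesssim 1/d$; set $c=\dint_{\Delta_{md}}f$ and split $f=f_1+f_2$, $f_1=\eta(f-c)$. Then $\nabla_Tf_1$ is supported in $\Delta_{md}$ with $\|\nabla_Tf_1\|_{L^p}\lesssim\|\nabla_Tf\|_{L^p(\Delta_{md})}$ (surface Poincar\'e), while $f_2\equiv c$, hence $\nabla_Tf_2\equiv 0$, on $\Delta_{md/2}$. Writing $u=u_1+u_2$ for the corresponding energy solutions (legitimate since constants solve $\mathcal L$ and energy solutions are unique modulo constants), the global bound controls $\|\tilde N(\nabla u_1)\|_{L^p(\Delta_d)}$ by $\|\nabla_Tf\|_{L^p(\Delta_{md})}$. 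For $v:=u_2-c$, solving $\mathcal Lv=0$ with vanishing tangential trace on $\Delta_{md/2}$, I would split each cone over $\Delta_d$ at a height $c_0d$, with $c_0=c_0(a)>2(1+a)$: the part above $c_0d$ lies in ${\mathbb R}^n_+\setminus T(\Delta_{(1+a)d})$ and contributes only the tail term, while for the truncated part $\tilde N^{c_0d}(\nabla u)\le\tilde N^{c_0d}(\nabla u_1)+\tilde N^{c_0d}(\nabla v)$, the first summand being already handled.

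The remaining term $\|\tilde N^{c_0d}_{a/2}(\nabla v)\|_{L^p(\Delta_d)}$ I would bound via Proposition~\ref{S3:C7-alt2} (valid for every $p>0$), applied on the enlarged ball $\Delta_{c_0d}$ so that its corkscrew point sits in the far region; this bounds it by $C\|S^{2c_0d}_{a/2}(\nabla v)\|_{L^p(\Delta_{m_1d})}$ plus a corkscrew term. For the square function I would invoke the local identity \eqref{eq5.15-alt2} for $v$: crucially the boundary term $\int|\nabla_Tv(0,\cdot)|^2$ vanishes (once $m$ is large enough that $\Delta_{4m_1d}\subset\Delta_{md/2}$), and the Carleson error $\|\mu\|_{\mathcal C}\int[\tilde N(\nabla v)]^2$ is absorbed by iterating Proposition~\ref{S3:C7-alt2} with the second-gradient Caccioppoli inequality (Proposition~\ref{Ciac2}), exactly as in \eqref{altsim3}--\eqref{altsim4}. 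What survives is $\int_{T(\Delta_{m_1d})}|\nabla^2v|^2\delta\lesssim(m_1d)^{n-1}\big(\sup_{\delta(x)\sim m_1d}W_v(x)\big)^2$ ($W_v$, $W_{u_1}$ denoting the analogues of $W$ with $\nabla v$, $\nabla u_1$ in place of $\nabla u$). Since $p\ge2$ and the relevant regions have finite measure $\sim(m_1d)^{n-1}$, H\"older turns this into $\|S^{2c_0d}_{a/2}(\nabla v)\|_{L^p(\Delta_{m_1d})}\lesssim(m_1d)^{(n-1)/p}\sup_{\delta\sim m_1d}W_v$, and similarly for the corkscrew term. Finally $W_v\le W+W_{u_1}$, and at heights $\delta(x)\sim m_1d$ (with $m_1>1+a$) the point $x$ is in the far region, so $W(x)\le\sup_{{\mathbb R}^n_+\setminus T(\Delta_{(1+a)d})}W$, whereas $W_{u_1}(x)^2\lesssim(m_1d)^{-(n-1)}\int_{\Delta_{Cm_1d}}[\tilde N_a(\nabla u_1)]^2\lesssim(m_1d)^{-2(n-1)/p}\|\tilde N_a(\nabla u_1)\|_{L^p}^2$ by H\"older; the prefactor $d^{(n-1)/p}$ is thereby exactly cancelled and $W_{u_1}$ contributes only $\lesssim\|\nabla_Tf\|_{L^p(\Delta_{md})}$. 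Assembling the pieces gives the flat estimate, and undoing the pullback yields \eqref{Main-Estlocxx2q}.

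I expect the one delicate point to be this last step: $\nabla u_1$ re-enters the estimate for $v$ through $W_v\le W+W_{u_1}$, and one must verify it does not reintroduce an uncontrolled term. This works only because a large-scale $L^2$-average of $\nabla u_1$ carries a negative power of the scale, supplied by H\"older and the global $L^p$ bound for $\tilde N(\nabla u_1)$, which precisely offsets the prefactor $d^{(n-1)/p}$. Everything else --- in particular the chain of enlargement constants $m/2\gg m_1\gg c_0\gg a$ and the small-Carleson-norm absorption --- is identical to the proof of Lemma~\ref{S7:L1} and requires no new ideas.
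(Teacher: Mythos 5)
Your overall strategy (split the data $f=f_1+f_2$, handle $u_1$ with the global $L^p$ bound \eqref{eqLP2}, and treat the remainder $v=u_2-c$ as a solution with vanishing data near $\Delta_d$) is a legitimate-looking localization scheme, but it breaks down at a step you treated as routine rather than at the one you flagged as delicate. To control $\|\tilde N^{c_0d}_{a/2}(\nabla v)\|_{L^p(\Delta_d)}$ you invoke Proposition~\ref{S3:C7-alt2} with exponent $p$, which requires $\|S^{2c_0d}_{a/2}(\nabla v)\|_{L^p(\Delta_{m_1d})}$ on the right; but the only control you have on this square function comes from the $L^2$ estimate \eqref{eq5.15-alt2} together with the absorption scheme \eqref{altsim3}--\eqref{altsim4}, i.e.\ a bound on $\|S(\nabla v)\|_{L^2(\Delta_{m_1d})}$. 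Your assertion that ``H\"older turns this into'' an $L^p$ bound with the factor $(m_1d)^{(n-1)/p}$ is exactly backwards for $p>2$: on a set of finite measure H\"older gives $\|g\|_{L^2}\le|\Delta|^{1/2-1/p}\|g\|_{L^p}$, not the reverse. There is no pointwise bound on $S^{2c_0d}(\nabla v)$ over $\Delta_d$ to interpolate with (the truncated cones reach the boundary, where only Caccioppoli-type $L^2$ information is available), and applying the global $L^p$ regularity estimate to $v$ itself does not help because $\nabla_Tf_2$ is not supported near $\Delta_d$, so that route destroys the localization. Hence for the genuinely new range $2<p<2+\delta$ your argument does not close; for $p=2$ it collapses to Lemma~\ref{S7:L1}, which is already known.

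The paper avoids this difficulty by not splitting the data at all: it localizes the level-set argument that produced \eqref{eqLP2}. For $\nu>\nu_0=2\sup_{x\in\mathcal O\cap\{\delta(x)>d\}}W(x)$, the connected components of $E_\nu$ that meet $\Delta_d$ are trapped inside $\Delta_{md}$, so the distributional inequality \eqref{Main-Estloc5zz} holds in the localized form \eqref{Main-Estloc5-y} with $g$ truncated to $\Delta_{md}$, and the Dahlberg--Kenig real-variable argument then yields the $L^p$ bound with only $\|\nabla_Tf\|_{L^p(\Delta_{md})}$ on the right; the range $\nu\le\nu_0$ contributes at most $Cd^{n-1}\nu_0^{p}$, which is precisely the second term of \eqref{Main-Estlocxx2q} raised to the $p$-th power. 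If you want to salvage your scheme you would have to run a good-$\lambda$ (level-set) argument for $v$, or for $u$ directly, in place of the single $L^2$ square-function estimate --- at which point you have reconstructed the paper's proof.
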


This can be seen as follows. For 
$$\nu>\nu_0=2 \sup_{x\in \mathcal O\cap\{\delta(x)>d\}}W(x)$$
the sets $E_\nu$ defined above consist of union of disjoint open subsets. Those subsets that intersect 
the set $\Delta_d$ must be localized to some enlargement of the set $\Delta_d$, say $\Delta_{md}$ which 
leads to a modified estimate  \eqref{Main-Estloc5zz}, namely
\begin{equation}\label{Main-Estloc5-y}
\int_{\tilde{E}_\nu\cap\{g\chi_{\Delta_{md}}\le\nu\}}\left[\tilde{N} (\nabla u)(x')\right]^2\,dx'\le C\nu^2|\tilde{E}_\nu| +C\alpha^{-1}\int_{\tilde{E}_\nu}\left[\tilde{N} (\nabla )(x')\right]^2\,dx',
\end{equation}
where $\tilde{E}_\nu$ is the union of disjoint subsets of $E_\nu$ that intersect $\Delta_d$. Because $\tilde{E}_\nu$ is localized to $\Delta_{md}$ we can truncate $g$ to $\Delta_{md}$ as well in \eqref{Main-Estloc5-y}.

For $\nu\le\nu_0$ we use a trivial estimate
$$|E_\nu\cap \Delta_d|\le |\Delta_d|\approx d^n.$$
The sets $E_\nu$ for $\nu\le \nu_0$ contribute to the value of 
$$\int_{\Delta_d}\left[\tilde{N} (\nabla u)(x')\right]^{p}\,dx'$$
at most $d^n \nu_0^{p}$ which is fine as it is exactly the second term on the righthand side of \eqref{Main-Estlocxx2q} after raising \eqref{Main-Estlocxx2q} to the $p$-th power. 

For $\nu>\nu_0$ the real variable argument of \cite{DK} is used. From this Lemma \ref{S7:L2} does holds.

Finally, extrapolation in the interval $p\in (2-\varepsilon,2)$ and the implied solvability of the Regularity problem in this interval is easier and again is identical to the argument of section 6 of \cite{DHM}. This concludes the proof of Theorem \ref{S3:T3}.

\section{Proof of Theorem \ref{S3:T1}}

The proof is based on ideas of Shen \cite{S2} generalized to our general variable-coefficients settings. We only need to focus of the $L^p$ solvability in the interval $(2,\frac{2(n-1)}{n-3}+\varepsilon)$ and the case $p\in (2-\varepsilon,2]$ already follows from Theorem \ref{S3:T0}.

We used the following abstract result \cite{S3}, see also \cite[Theorem 3.1]{WZ} for a version on an arbitrary bounded domain.

\begin{theorem}\label{th-sh} Let $T$ be a bounded sublinear operator on $L^2({\mathbb R}^{n-1};{\mathbb R}^m)$. Suppose that
for some $p>2$, $T$ satisfies the following $L^p$ localization property. For any ball $\Delta=\Delta_d\subset{\mathbb R}^{n-1}$ and $C^\infty$ function $f$ with supp$(f)\subset{\mathbb R}^{n-1}\subset 3\Delta$ the following estimate holds:
\begin{align}
&\left(|\Delta|^{-1}\int_\Delta|Tf|^p\,dx'\right)^{1/p}\le\label{eq-pf8}\\
&\qquad C\left\{\left(|2\Delta|^{-1}\int_{2\Delta}|Tf|^2\,dx'\right)^{1/2}+\sup_{\Delta'\supset \Delta}\left(|\Delta'|^{-1}\int_{\Delta'}|f|^2\,dx'\right)^{1/2}  \right\},\nonumber
\end{align}
for some $C>0$ independent of $f$. Then $T$ is bounded $L^q({\mathbb R}^{n-1};{\mathbb R}^m)$ for any $2\le q<p$.
\end{theorem}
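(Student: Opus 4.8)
\emph{Sketch of the argument (plan).} The plan is to run Shen's real-variable (good-$\lambda$) method \cite{S3}; we only indicate the scheme. Fix $q$ with $2<q<p$ (the endpoint $q=2$ being the hypothesis). Since the target inequality $\|Tf\|_{L^q}\le C\|f\|_{L^q}$ is stable under approximation of $f$ by functions in $L^2\cap L^q$, it suffices to establish it for $f\in C^\infty_c(\mathbb R^{n-1};{\mathbb R}^m)$. For such $f$ we have $Tf\in L^2$, so the Hardy--Littlewood maximal function $\mathcal M(|Tf|^2)$ is a.e.\ finite; since $|Tf|\le\big(\mathcal M(|Tf|^2)\big)^{1/2}$ a.e.\ and $\mathcal M$ is bounded on $L^{q/2}$ (as $q/2>1$), the whole problem reduces to proving $\big\|\mathcal M(|Tf|^2)\big\|_{L^{q/2}}\le C\big\||f|^2\big\|_{L^{q/2}}=C\|f\|_{L^q}^2$.

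The core is a good-$\lambda$ inequality obtained from a Calder\'on--Zygmund decomposition of level sets. First one notes that \eqref{eq-pf8} extends from $C^\infty$ data to arbitrary $L^2$ data supported in $\overline{3\Delta}$ by density, using the $L^2$-boundedness of $T$ to pass to the limit on the left. Then, for a level $\lambda>0$, decompose the open set $\{\mathcal M(|Tf|^2)>\lambda\}$ (a proper subset of $\mathbb R^{n-1}$ of finite measure) into a Whitney family $\{\Delta_j\}$ with $\{2\Delta_j\}$ of bounded overlap, each carrying a comparison point $\hat x_j$ with $\mathcal M(|Tf|^2)(\hat x_j)\le\lambda$ at distance $\sim\diam\Delta_j$ from $\Delta_j$. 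On $\Delta_j$ write $f=f\chi_{3\Delta_j}+f\chi_{(3\Delta_j)^c}$; by sublinearity, $|Tf|\le R_j+G_j$ with $R_j:=|T(f\chi_{3\Delta_j})|$ and $G_j:=|T(f\chi_{(3\Delta_j)^c})|$. For $R_j$ one applies the extended \eqref{eq-pf8}: its first right-hand term obeys $\big(\dint_{2\Delta_j}R_j^2\big)^{1/2}\le C|\Delta_j|^{-1/2}\|f\|_{L^2(3\Delta_j)}\le C\sup_{\Delta'\supset\Delta_j}\big(\dint_{\Delta'}|f|^2\big)^{1/2}$, so that $\big(\dint_{\Delta_j}R_j^{\,p}\big)^{1/p}\le C\sup_{\Delta'\supset\Delta_j}\big(\dint_{\Delta'}|f|^2\big)^{1/2}\le C\big(\mathcal M(|f|^2)(\hat x_j)\big)^{1/2}$ (a harmless fattening of the competing balls lets them see $\hat x_j$). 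For $G_j$, sublinearity gives $G_j\le|Tf|+R_j$, whence $\big(\dint_{2\Delta_j}G_j^2\big)^{1/2}\le C\big(\mathcal M(|Tf|^2)(\hat x_j)\big)^{1/2}+C\big(\mathcal M(|f|^2)(\hat x_j)\big)^{1/2}\le C\lambda^{1/2}+C\big(\mathcal M(|f|^2)(\hat x_j)\big)^{1/2}$, after a slight enlargement of $2\Delta_j$ so that $\hat x_j$ dominates its average.

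Restricting to the $\Delta_j$ that meet $\{\mathcal M(|f|^2)\le\varepsilon\lambda\}$, Chebyshev together with the $L^p$ bound for $R_j$ (the sole place $p>2$ enters) and the $L^2$ bound for $G_j$ gives, for $A$ large, $\big|\{x\in\Delta_j:|Tf(x)|^2>A\lambda\}\big|\le C\big(A^{-p/2}\varepsilon^{p/2}+A^{-1}\big)|\Delta_j|$; summing over $j$ with $\sum_j|\Delta_j|\le C\big|\{\mathcal M(|Tf|^2)>\lambda\}\big|$ yields $\big|\{\mathcal M(|Tf|^2)>A\lambda,\ \mathcal M(|f|^2)\le\varepsilon\lambda\}\big|\le C(A^{-p/2}\varepsilon^{p/2}+A^{-1})\big|\{\mathcal M(|Tf|^2)>\lambda\}\big|$. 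Choosing $A$ large and then $\varepsilon$ small, multiplying by $\lambda^{q/2-1}$ and integrating in $\lambda\in(0,\infty)$ — with a routine truncation of $Tf$ to make the left side a priori finite, removed in the limit and controlled via $|Tf|^2\in L^1$ for small $\lambda$ — and invoking boundedness of $\mathcal M$ on $L^{q/2}$ produces $\big\|\mathcal M(|Tf|^2)\big\|_{L^{q/2}}\le C\|f\|_{L^q}^2$, hence $\|Tf\|_{L^q}\le C\|f\|_{L^q}$; this extends to all $f\in L^q$ by density, so $T$ is bounded on $L^q$ for every $2\le q<p$.

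I expect the main obstacle to be the far-field term: because $T$ is only sublinear one cannot set $T(f\chi_{(3\Delta_j)^c})=Tf-T(f\chi_{3\Delta_j})$, so $G_j$ is controlled only through $|Tf|$ itself, and the scheme works only if the Whitney decomposition (together with the slight enlargement of the stopping balls, so that $\hat x_j$ governs the average of $|Tf|^2$ on $2\Delta_j$) makes this self-referential average genuinely $O(\lambda)$ on the relevant balls — which is precisely why a good-$\lambda$ argument rather than a direct comparison is needed. The secondary care goes into the density extension of \eqref{eq-pf8}, the finite-overlap bookkeeping in the sum over $j$, and the truncation-and-limit step that converts the good-$\lambda$ inequality into a clean operator bound without leaving an auxiliary $\|Tf\|_{L^2}$ term on the right.
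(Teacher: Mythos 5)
The paper offers no proof of Theorem \ref{th-sh}: it is quoted from Shen \cite{S3} (see also \cite[Theorem 3.1]{WZ}), so your reconstruction has to be measured against the standard real-variable argument there. Your overall strategy (Calder\'on--Zygmund decomposition of $\{\mathcal M(|Tf|^2)>\lambda\}$ plus a good-$\lambda$ inequality) is the right one, but you have attached the two halves of the hypothesis to the wrong halves of the splitting $f=f\chi_{3\Delta_j}+f\chi_{(3\Delta_j)^c}$, and the good-$\lambda$ inequality you arrive at is too weak to close. First, the support condition in \eqref{eq-pf8} is a typo for $\mathrm{supp}\,f\subset\mathbb{R}^{n-1}\setminus 3\Delta$ --- this is exactly how the theorem is invoked in Section 8, where \eqref{eq-pf9} is established for solutions whose datum \emph{vanishes} on $5m\Delta$ --- so the $L^p$ gain is available for the far piece $G_j=|T(f\chi_{(3\Delta_j)^c})|$, not for the near piece $R_j=|T(f\chi_{3\Delta_j})|$ to which you apply it. Second, and independently of how one reads the support condition, the constant $\eta=C(A^{-p/2}\varepsilon^{p/2}+A^{-1})$ in your good-$\lambda$ inequality cannot work: integrating $\lambda^{q/2-1}|\{\mathcal M(|Tf|^2)>A\lambda\}|$ and absorbing requires $A^{q/2}\eta<1$, and the bare $A^{-1}$ term gives $A^{q/2}\cdot A^{-1}=A^{q/2-1}\ge 1$ for every $A\ge 1$ once $q>2$. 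That $A^{-1}$ term comes precisely from estimating $G_j$ only in $L^2$ by $\mathcal M(|Tf|^2)(\hat x_j)\le\lambda$, with no $\varepsilon$-gain; you flagged this as the ``main obstacle'' and concluded that an $O(\lambda)$ bound suffices, but it does not.

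The correct assignment reverses the roles. The near piece $|T(f\chi_{c\Delta_j})|$ is estimated purely by the global $L^2$-boundedness of $T$, which makes its $L^2$ average over $2\Delta_j$ of size $\big(\mathcal M(|f|^2)(\hat x_j)\big)^{1/2}\le(\varepsilon\lambda)^{1/2}$, hence a Chebyshev contribution $C\varepsilon A^{-1}|\Delta_j|$ carrying the small factor $\varepsilon$. The far piece has datum vanishing on $3\Delta_j$, so \eqref{eq-pf8} applies to it; its first right-hand term is controlled by $\big(\dint_{2\Delta_j}|Tf|^2\big)^{1/2}$ plus the near piece, hence by $C\lambda^{1/2}$, and Chebyshev at exponent $p$ contributes $CA^{-p/2}|\Delta_j|$. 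The resulting constant $C(\varepsilon A^{-1}+A^{-p/2})$ does satisfy $A^{q/2}\eta<1$ after choosing first $A$ large (using $q<p$) and then $\varepsilon$ small depending on $A$. The remaining ingredients of your outline --- density extension of \eqref{eq-pf8} to $L^2$ data (supported in the \emph{complement} of $3\Delta$), finite overlap of the Whitney family, and the truncation making the $L^{q/2}$ norm a priori finite --- are standard and fine once this structural error is repaired.
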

In our case the role of $T$ is played by the sublinear operator $f\mapsto \tilde{N}_{2,a}(u)$, where $u$ is the solution of the Dirichlet problem ${\mathcal L}u=0$ with boundary data $f$. Clearly, in the Theorem above
the factors $2\Delta$, $3\Delta$ do not play significant role. Hence if we establish estimate \eqref{eq-pf8}
with $2\Delta$ replaced by $m\Delta$ with $f$ vanishing on $(m+1)\Delta$ for some $m>1$ the claim of the Theorem will remain to hold.

It suffices again to work on $\Omega={\mathbb R}^n_+$.
Clearly, our operator $T:f\mapsto \tilde{N}_{2,a}(u)$ is sublinear and bounded on $L^2$
by Theorem \ref{S3:T0}, for coefficients with small Carleson norm $\mu$. To prove \eqref{eq-pf8} we shall establish the following reverse H\"older inequality, following the idea of Shen \cite{S2}.
\begin{equation}
\left(\frac1{|\Delta|}\int_\Delta|\tilde{N}_{2,a}(u)|^p\,dx'\right)^{1/p}\le\label{eq-pf9}
C\left(\frac1{|5m\Delta|}\int_{5m\Delta}|\tilde{N}_{2,a}(u)|^2\,dx'\right)^{1/2},
\end{equation}
where $f=u\big|_{\partial{\mathbb R}^n_+}$ vanishes on $5m\Delta$. Here $m$ is determined by Lemma \ref{S7:L2}.
Having this by Theorem \ref{th-sh} we have for any $q\in [2,p)$ the estimate
\begin{equation}\label{eq-pf10}
\|\tilde{N}_{2,a}(u)\|_{L^{q}({\BBR}^{n-1})}\le C\|f\|_{L^{q}({\BBR}^{n-1})},
\end{equation}
which implies $L^q$ solvability of the Dirichlet problem for the operator $\mathcal L$.

It remains to establish \eqref{eq-pf9}. Let us define
\begin{align}\label{eq-pf11}
&{\mathcal M}_1(u)(x')=\sup_{y\in\Gamma_a(x')}\{w(y):\,\delta(y)\le cd\},\\
&{\mathcal M}_2(u)(x')=\sup_{y\in\Gamma_a(x')}\{w(y):\,\delta(y)> cd\}.\nonumber
\end{align}
where $c=c(a)>0$ is chosen such that for all $x'\in\Delta$ if $y=(y_0,y')\in\Gamma_a(x')$ and $y_0=\delta(y)\le cd$ then $y'\in 2\Delta$. Here $d=\mbox{diam}(\Delta)$ and $w$ is the $L^2$ average of $u$
$$w(y)=\left(\dint_{B_{\delta(y)/2}(y)}|u(z)|^2\,dz\right)^{1/2}.$$
It follows that
$$\tilde{N}_{2,a}(u)=\max\{{\mathcal M}_1(u),{\mathcal M}_2(u)\}.$$
We first estimate ${\mathcal M}_2(u)$. Pick any $x'\in\Delta$.
For any $y\in\Gamma(x')$ with $\delta(y)>cd$ it follows that
for a large subset $A$ of $2\Delta$ (of size comparable to $2\Delta$) we have
$$z'\in A\quad\Longrightarrow\quad y\in\Gamma_a(z')\quad\Longrightarrow\quad w_2(y)\le \tilde{N}_{2,a}(u)(z').$$
Hence for any $x'\in\Delta$ 
$${\mathcal M}_2(u)(x')\le C\left(\frac1{|2\Delta|}\int_{2\Delta} \left[\tilde{N}_{2,a}(u)(z')\right]^2\,dz'\right)^{1/2}.$$
It remains to estimate ${\mathcal M}_1(u)$ on $\Delta$.

We write
$$u(x_0,x')-u(0,y')=\int_{0}^{1}\frac{\partial u}{\partial s}(sx_0,(1-s)y'+sx')\,ds.$$
Let $K=\{(y_0,y'):y'\in \Delta\mbox{ and }cd<y_0<2cd\}$. Using the previous line and the fact that $u$ vanishes on $3\Delta$ we have for any $x'\in\Delta$
\begin{equation}\label{eq-pf12}
{\mathcal M}_1(u)(x')\le \sup_{K}w\,+\,C\int_{2\Delta}\frac{\tilde{N}_{2,a/2}(\nabla u)(y')}{|x'-y'|^{n-2}}dy'.
\end{equation}
By the fractional integral estimate, this implies that
\begin{equation}\label{eq-pf13}
\left(\frac1{|\Delta|}\int_\Delta[{\mathcal M}_1(u)(x')]^p\,dx'\right)^{1/p}\le \sup_{K}w\,+\,Cd
\left(\frac1{|2\Delta|}\int_{2\Delta}[\tilde{N}_{2,a/2}(\nabla u)(x')]^q\,dx'\right)^{1/q},
\end{equation}
where $\frac1p=\frac1q-\frac1{n-1}$ and $1<q<n-1$.\vglue1mm

To further estimate \eqref{eq-pf13} we need to use the local solvability of the $L^q$ Regularity problem from Lemma \ref{S7:L2}. We apply the lemma to the domain 
${\mathcal O}_\tau={\mathcal O}_{\tau\Delta,a}$, where $\Delta$ is as in \eqref{eq-pf13} and $\tau\in [2,3]$. This gives us
\begin{equation}\label{Main-Estlocxx2}
\|\tilde{N}_{a/2} (\nabla u)\|_{L^{q}(2\Delta)}\leq C\|\nabla_T f\|_{L^{q}(\partial{\mathcal O\cap \overline{T(\tau m\Delta)}})}+Cd^{(n-1)/q}\sup_{x\in {\mathcal O}_\tau \cap\{\delta(x)>d\}}W(x).
\end{equation}
Observe first that by the Ciacciopoli's inequality we have for all $x\in{\mathbb R}^n_+$ with $\delta(x)>d$
$$W(x)\le Cd^{-1}w(x).$$
We have intentionally shrunk the size of the ball in the definition of $W$ so that this pointwise estimate holds. Since the $x$ we consider in the supremum in \eqref{Main-Estlocxx2} is in $\mathcal O_\tau$ it then follows
\begin{equation}\label{eq-pf16}
\sup_{x\in {\mathcal O}_\tau\cap\{\delta(x)>d\}}W(x)\lesssim d^{-1}\left(\frac1{|3\Delta|}\int_{3\Delta} \left[\tilde{N}_{2,a}(u)(z')\right]^2\,dz'\right)^{1/2}.
\end{equation}
We use this in \eqref{Main-Estlocxx2}, integrate \eqref{Main-Estlocxx2} in $\tau$ over the interval $[2,3]$ and divide by $d^{(n-1)/q}$. This gives after using the fact that $u=0$ on $5m\Delta$:
\begin{align}\label{eq-pf17}
&\left(\frac1{|2\Delta|}\int_{2\Delta}[\tilde{N}_{2,a/2}(\nabla u)(x')]^q\,dx'\right)^{1/q}\\
\lesssim &\quad
\left(\frac1{T(3m\Delta)}\iint_{T(3m\Delta)}|\nabla u|^q\,dx\right)^{1/q} +d^{-1}\left(\frac1{|3\Delta|}\int_{3\Delta} \left[\tilde{N}_{2,a}(u)(z')\right]^2\,dz'\right)^{1/2}.\nonumber
\end{align}
We have also used the trivial estimate $|\nabla_Tu|\le|\nabla u|$ on $\partial {\mathcal O}_\tau\cap{T(3m\Delta)}$. 

The Caccioppoli's inequality holds for all strongly elliptic systems. Its well known consequence is the higher integrability for $\nabla u$ (c.f. \cite{Gi}) which implies that for some $q>2$ we have
$$\left(\frac1{T(3m\Delta)}\iint_{T(3m\Delta)}|\nabla u|^q\,dx\right)^{1/q}\lesssim \left(\frac1{T(4m\Delta)}\iint_{T(4m\Delta)}|\nabla u|^2\,dx\right)^{1/2}.$$
It follows by the boundary Ciacciopoli's inequality
\begin{align}\nonumber
\left(\frac1{T(4m\Delta)}\iint_{T(4m\Delta)}|\nabla u|^2\,dx\right)^{1/2}&\lesssim d^{-1}\left(\frac1{T(5m\Delta)}\iint_{T(5m\Delta)}|u|^2\,dx\right)^{1/2}\\\nonumber
&\lesssim d^{-1}\left(\frac1{|5m\Delta|}\int_{5m\Delta} \left[\tilde{N}_{2,a}(u)(z')\right]^2\,dz'\right)^{1/2}.\nonumber
\end{align}
This combined with \eqref{eq-pf17} gives us
$$\left(\frac1{|2\Delta|}\int_{2\Delta}[\tilde{N}_{2,a/2}(\nabla u)(x')]^q\,dx'\right)^{1/q}\lesssim d^{-1}\left(\frac1{|5m\Delta|}\int_{5m\Delta} \left[\tilde{N}_{2,a}(u)(z')\right]^2\,dz'\right)^{1/2}.
$$
Finally, inserting this estimate into \eqref{eq-pf13} yields
\begin{equation}\label{eq-pf19}
\left(\frac1{|\Delta|}\int_\Delta[{\mathcal M}_1(u)(x')]^p\,dx'\right)^{1/p}\le C\left(\frac1{|5m\Delta|}\int_{5m\Delta} \left[\tilde{N}_{2,a}(u)(z')\right]^2\,dz'\right)^{1/2},
\end{equation}
where $\frac1p=\frac1q-\frac1{n-1}$ and $1<q<n-1$. Here $q>2$ is such that improved integrability of $\nabla u$ holds and also $q<2+\delta$ with $\delta$ as in Lemma \ref{S7:L2}. This implies in dimensions $2$ and $3$ that we can consider any $2<p<\infty$, while in dimensions $n\ge 4$ we can have
$2<p\le p_{\max}=q(n-1)/(n-1-q)$. Observe that always $p_{max}>2(n-1)/(n-3)$. From this claim of Theorem \ref{S3:T1} follows as we have established \eqref{eq-pf9} for such values of $p$.\qed

\section{Proof of Corollary \ref{C:lame}}

We discuss here how the Lam\'e system \eqref{ES-lame2} fits into the framework we have introduced in this paper, in particular when the weaker oscillation condition \eqref{Car_lame2} is imposed.
The case when stronger condition \eqref{Car_lame} is assumed is discussed extensively in section 7 of \cite{DHM}.

Suppose that $\lambda,\mu$ are as in Corollary \ref{C:lame} and let \eqref{Car_lame2} be a small Carleson measure. Let $\mathcal L_1$ be as in section \ref{S4} defined by \eqref{ES-new2} where
\begin{equation}\label{eqLC}
\bar{A}_{ij}^{\alpha\beta}(x)=\mu(x)\delta_{ij}\delta_{\alpha\beta}+\lambda(x)\delta_{i\alpha}\delta_{j\beta}+\mu(x)\delta_{i\beta}\delta_{j\alpha},\quad \bar{B}^{\alpha\beta}_i(x)=0.
\end{equation}

Let $\mathcal L$ be as in section \ref{S4} defined by \eqref{ES-new} with coefficients
\begin{equation}\label{eqLCs}
{A}_{ij}^{\alpha\beta}(x)=\tilde\mu(x)\delta_{ij}\delta_{\alpha\beta}+\tilde\lambda(x)\delta_{i\alpha}\delta_{j\beta}+\tilde\mu(x)\delta_{i\beta}\delta_{j\alpha},\quad {B}^{\alpha\beta}_i(x)=0,
\end{equation}
where
\begin{equation}\label{eq-lmu}
\tilde{\lambda}(x)= \int_{{\mathbb R}^n} \lambda(y)\varphi_{\rho(x)}(x-y)dy,\quad \tilde{\mu}(x)= \int_{{\mathbb R}^n} \mu(y)\varphi_{\rho(x)}(x-y)dy,
\end{equation}
Here (as in section \ref{S6}) $\varphi$ is a smooth real, nonnegative bump function on ${\mathbb R}^n$ supported in the ball  $B_{1/2}(0)$ such that $\int \varphi=1$ and
$\varphi_t(y) = t^{-n}\varphi(y/t)$. By $\rho(x)$ we denote a mollified distance function (i.e. $\rho(x)\approx\delta(x)$ but $\rho$ is smooth in the interior of $\Omega$). By the same argument as in section
\ref{S6} the coefficients of $\mathcal L$ satisfy the small Carleson condition \eqref{Car_lame}.

Observe also that if \eqref{Cond-lame} holds for the pair $(\lambda,\mu)$ then it also does for $(\tilde\lambda,\tilde\mu)$.

The operators $\mathcal L,\,\mathcal L_1$ do not quite fit the framework of sections \ref{S4}-\ref{SS:43} yet.  We have as required \eqref{Car-m}  a small Carleson measure but $A_{0j}^{\alpha\beta}=\delta_{\alpha\beta}\delta_{0j}$ does not hold and hence there results of sections \ref{S4}-\ref{SS:43} do not apply to $\mathcal L,\,\mathcal L_1$ without further adjustments.

Recall the two transformations (2.19)-(2.20) and (2.24) of \cite{DHM} that modify our operator there to achieve $A_{0j}^{\alpha\beta}=\delta_{\alpha\beta}\delta_{0j}$. We outline it briefly here.

From now on let $M(x)=(A^{\alpha\beta}_{00}(x))_{\alpha,\beta=1}^n$ be a minor $n\times n$ matrix (which is invertible by \eqref{Cond-lame}. Following (2.19)-(2.20) of \cite{DHM} let

\begin{equation}\nonumber
\widehat{A}^{\alpha\beta}_{ij}:=\sum_{\gamma=0}^{n-1}\left[M^{-1}\right]^{\alpha\gamma}A_{ij}^{\gamma\beta},\quad \widehat{B}^{\alpha\beta}_{i}:=\sum_{\gamma=0}^{n-1}\left(\left[M^{-1}\right]^{\alpha \gamma} 
B_{i}^{\gamma\beta}-\sum_{k=0}^{n-1}\partial_k\left([M^{-1}]^{\alpha\gamma}\right)A_{ki}^{\gamma\beta}\right),
\end{equation}
\begin{equation}\nonumber
\widehat{\bar A}^{\alpha\beta}_{ij}:=\sum_{\gamma=0}^{n-1}\left[M^{-1}\right]^{\alpha\gamma}\bar{A}_{ij}^{\gamma\beta},\quad \widehat{\bar B}^{\alpha\beta}_{i}:=\sum_{\gamma=0}^{n-1}\left(\left[M^{-1}\right]^{\alpha \gamma} 
\bar{B}_{i}^{\gamma\beta}-\sum_{k=0}^{n-1}\partial_k\left([M^{-1}]^{\alpha\gamma}\right)\bar{A}_{ki}^{\gamma\beta}\right),
\end{equation}
and by (2.24) of \cite{DHM}  set
\begin{eqnarray}\label{yr4DDF}
&&\overline{A_{ij}^{\alpha\beta}}:=
\begin{cases}
\widehat A_{ij}^{\alpha\beta},&\quad\mbox{if $i,j>0$ or $i=j=0$,}
\\[6pt]
\widehat A_{ij}^{\alpha\beta}+\widehat A_{ji}^{\alpha\beta},&\quad\mbox{if $i>0$ and $j=0$,}
\\[6pt]
0,&\quad\mbox{if $i=0$ and $j>0$,}
\end{cases}\\\nonumber
&&\overline{B_{i}^{\alpha\beta}}:=
\begin{cases}
\widehat{B}_{i}^{\alpha\beta}+\sum_{j=0}^{n-1}\partial_j(\widehat A^{\alpha\beta}_{0j})&\quad\mbox{if $i=0$,}\\[6pt]
\widehat{B}_{i}^{\alpha\beta}-\partial_0(\widehat A^{\alpha\beta}_{0i})&\quad\mbox{if $i>0$,}
\end{cases}
\end{eqnarray}
\begin{eqnarray}\label{yr4DDFA}
&&\overline{\bar{A}_{ij}^{\alpha\beta}}:=
\begin{cases}
\widehat {\bar{A}}_{ij}^{\alpha\beta},&\quad\mbox{if $i,j>0$ or $i=j=0$,}
\\[6pt]
\widehat {\bar{A}}_{ij}^{\alpha\beta}+\widehat A_{ji}^{\alpha\beta},&\quad\mbox{if $i>0$ and $j=0$,}
\\[6pt]
0,&\quad\mbox{if $i=0$ and $j>0$.}
\end{cases}
\\\nonumber
&&\overline{\bar{B}_{i}^{\alpha\beta}}:=
\begin{cases}
\widehat{\bar B}_{i}^{\alpha\beta}+\sum_{j=0}^{n-1}\partial_j(\widehat A^{\alpha\beta}_{0j})&\quad\mbox{if $i=0$,}\\[6pt]
\widehat{\bar B}_{i}^{\alpha\beta}-\partial_0(\widehat A^{\alpha\beta}_{0i})&\quad\mbox{if $i>0$.}
\end{cases}
\end{eqnarray}
The setup here is rather delicate, notice that we define the new coefficients so that a derivative only falls on the mollified coefficients (never on the coefficients defined using the original $\lambda$, $\mu$ as these do not have sufficient smoothness).\vglue1mm

Finally, let $\overline{\mathcal L_1}$ be the operator associated with coefficients $\overline{\bar{A}_{ij}^{\alpha\beta}},\,\overline{\bar{B}_{i}^{\alpha\beta}}$ and let $\overline{\mathcal L}$ be the operator 
associated with coefficients $\overline{{A}_{ij}^{\alpha\beta}},\,\overline{{B}_{i}^{\alpha\beta}}$.

We have that $\mathcal Lu=0\,\Leftrightarrow \overline{\mathcal L}u=0$ and 
$\mathcal L_1u=0\,\Leftrightarrow \overline{\mathcal L_1}u=0$. Furthermore by section 2 of \cite{DHM}
we also have that the coefficients of $\overline{\mathcal L}$ satisfy the condition $\overline{A_{0j}^{\alpha\beta}}=\delta_{\alpha\beta}\delta_{0j}$. It follows that the framework of sections \ref{S4}-\ref{SS:43} applies to these two modified operators and in particular the conclusions of Lemma \ref{S3:L4-alt1}, Corollary \ref{S4:C1-alt1} and Lemma \ref{S3:L8-alt1} hold, provided the operator $\overline{\mathcal L}$ is strongly elliptic. That question has however been dealt with in section 7 of \cite{DHM} (Lemma 7.1) and in particular it has been shown that \eqref{Cond-lame} is a sufficient condition for the strongly ellipticity (although some further changes of the coefficients similar to \eqref{yr4DDF}-\eqref{yr4DDFA} might be needed).

From this claims in the first part of Corollary \ref{C:lame} up to the estimate \eqref{Main-Est-LM} follow.
The remaining claims follow directly from Theorems \ref{S3:T1}  and \ref{S3:T3} (we no longer have to work with two operators $\mathcal L,\,\mathcal L_1$)  as we can skip the modification procedure \eqref{eq-lmu} since our Lam\'e coefficients already have the necessary smoothness. 

\vglue3mm

\begin{bibdiv}
\begin{biblist}

\bib{AAAHK}{article}{
   author={Alfonseca, M.},
   author={Auscher, P.},
   author={Axelsson, A.},
   author={Hofmann, S.},
   author={Kim, S.},
   title={Analyticity of layer potentials and $L^2$ solvability of boundary value problems for divergence form elliptic equations with complex $L^\infty$ coefficients.},
   journal={Adv. Math},
   volume={226},
   date={2011},
   number={5},
   pages={4533--4606},
}

\bib{AA1}{article}{
   author={Auscher, P.l},
   author={Axelsson, A.},
   title={Weighted maximal regularity estimates and solvability of
   non-smooth elliptic systems I},
   journal={Invent. Math.},
   volume={184},
   date={2011},
   number={1},
   pages={47--115},
}

\bib{AHLMT}{article}{
   author={Auscher, P.},
   author={Hofmann, S.},
   author={Lacey, M.},
   author={McIntosh, A.},
   author={Tchamitchian, P.},
   title={The solution of the Kato square root problem for second order elliptic operators on ${\mathbb R}^n$},
   journal={Ann. Mat.},
   volume={156},
   date={2001},
   number={2},
   pages={633--654},
}

\bib{AR2}{article}{
   author={Auscher, P.},
   author={Ros{\'e}n, A.},
   title={Weighted maximal regularity estimates and solvability of nonsmooth
   elliptic systems, II},
   journal={Anal. PDE},
   volume={5},
   date={2012},
   number={5},
   pages={983--1061},
}
		
\bib{AAH}{article}{
   author={Auscher, P.},
   author={Axelsson, A.},
   author={Hofmann, S.},
   title={Functional calculus of Dirac operators and complex perturbations of Neumann and Dirichlet problems},
   journal={J. Func. Anal},
   volume={255},
   date={2008},
   number={2},
   pages={374--448},
}

 \bib{AAM}{article}{
   author={Auscher, P.},
   author={Axelsson, A.},
   author={McIntosh, A.},
   title={Solvability of elliptic systems with square integrable boundary
   data},
   journal={Ark. Mat.},
   volume={48},
   date={2010},
   number={2},
   pages={253--287},
}
		
 \bib{BM}{article}{
   author={Brown, R.},
   author={Mitrea, I.},
   title={The mixed problem for the Lam\'e system in a class of Lipschitz domains},
   journal={J. Diff. Eq.},
   volume={246},
   year={2009},
   issue={7}, 
   pages={2577--2589},
}   

    \bib{DK}{article}{
   author={Dahlberg, B.},
   author={Kenig, C.},
   title={Hardy spaces and the Neumann problem in $L^p$ for Laplace's
   equation in Lipschitz domains},
   journal={Ann. of Math. (2)},
   volume={125},
   date={1987},
   number={3},
   pages={437--465},
}

    \bib{DKV}{article}{
   author={Dahlberg, B.},
   author={Kenig, C.},
   author={Verchota, G.},
   title={Boundary value problems for the systems ofelastostatics
in Lipschitz domains},
   journal={Duke Math. J.}, 
   volume={57},
   date={1988},
   pages={795--818},
}

 \bib{DM}{article}{
 author={Dindo\v{s}, M.},
 author={Mitrea, M.},
 title={The stationary Navier-Stokes system in nonsmooth manifolds: the Poisson problem in Lipschitz and $C^1$ domains},
 journal={Arch. Ration. Mech. Anal.},
 volume={174},
 year={2004},
 number={1},
 pages={1--47},
}

\bib{DHM}{article}{
   author={Dindo{\v{s}}, M.},
   author={Hwang, S.},
   author={Mitrea, M.},
   title={The $L^p$ Dirichlet boundary problem for second order Elliptic Systems with rough coefficients},
   journal={arXiv:1708.02289, Trans. of AMS, to appear},
}

   \bib{DLP}{article}{
   author={Dindo\v{s}, M.},
   author={Li, J..},
   author={Pipher, J.},
   title={$p$-ellipticity of second order elliptic systems and the Lam\'e problem},
   journal={in preparation},
    }

   \bib{DPP}{article}{
   author={Dindo\v{s}, M.},
   author={Petermichl, S.},
   author={Pipher, J.},
   title={The $L^p$ Dirichlet problem for second order elliptic operators
   and a $p$-adapted square function},
   journal={J. Funct. Anal.},
   volume={249},
   date={2007},
   number={2},
   pages={372--392},
    }
    
\bib{DP}{article}{
   author={Dindo\v{s}, M.},
   author={Pipher, J.},
   title={Regularity theory for solutions to second order elliptic operators with complex coefficients and the $L^p$ Dirichlet problem},
   journal={Adv. Math.},
   volume={341},
   date={2019},
   pages={255--298},
    }

\bib{DP2}{article}{
   author={Dindo\v{s}, M.},
   author={Pipher, J.},
   title={Perturbation theory for solutions to second order elliptic operators with complex coefficients and the $L^p$ Dirichlet problem},
   journal={Acta Math. Sin.},
   volume={35},
   date={2019},
   issue={6},
   pages={749--770},
    }

\bib{DP3}{article}{
   author={Dindo\v{s}, M.},
   author={Pipher, J.},
   title={Boundary value problems for second order elliptic operators with complex coefficients},
   journal={Anal. and PDE},
   volume={13},
   year={2020},
   issue={4},
   pages={1221--1268},
    }

\bib{DP4}{article}{
   author={Dindo\v{s}, M.},
   author={Pipher, J.},
   title={Extrapolation of the Dirichlet problem for elliptic equations with complex coefficients},
   journal={J. of Func. Anal.},
   volume={279},
   year={2020},
   issue={9},
    }
    
   \bib{DPR}{article}{
   author={Dindo\v{s}, M.},
   author={Pipher, J.},
   author={Rule, D.},
   title={The boundary value problems for second order elliptic operators satisfying a Carleson condition},
   journal={Com. Pure Appl. Math.},
   volume={70},
   number={2},
   date={2017},
   pages={1316--1365},
    }
    
   \bib{F}{article}{
   author={Fabes, E.},
   title={Layer potential methods for boundary value problems on Lipschitz domains},
   journal={In: Lecture Notes in Mathematics}, 
   volume={1344},
   year={1980},
   pages={55--80},
    }

   \bib{FKV}{article}{
   author={Fabes, E.},
   author={Kenig, C.},
   author={Verchota, G.},
   title={Boundary value problems for the Stokes system on Lipschitz domains},
   journal={Duke Math. J.},
   volume={57}, 
   pages={769--793},
   year={1988},
  }
  
  \bib{FMZ}{article}{
   author={Feneuil, J.},
   author={Mayboroda, S.},
   author={Zhao, Z.},
   title={Dirichlet problem in domains with lower dimensional boundaries},
   journal={Preprint, arXiv:1810.06805},
  }
  
  \bib{G}{article}{
   author={Gao, W.},
   title={Boundary value problems on Lipschitz domains for general elliptic systems},
   journal={J. Funct. Anal.},
   volume={95},
   pages={377--399},
   year={1991},
    }
  
  \bib{Gi}{book}{
  author={Giaquinta, M.},
  title={Multiple Integrals in the Calculus of Variations and Nonlinear Elliptic Systems}, 
  series={Annals of Math. Studies},
  volume={105},
  publisher={Princeton Univ. Press},
  year={1983},
}  
  
\bib{HKMPreg}{article}{
   author={Hofmann, S.},
   author={Kenig, C.},
   author={Mayboroda, S.},
   author={Pipher, J.},
   title={The regularity problem for second order elliptic operators with complex-valued bounded measurable coefficients},
   journal={Math. Ann.},
   volume={361},
   date={2015},
   issue={3--4},
   pages={863--907},
}

 \bib{HM}{article}{
 author={Hofmann, S.},
   author={Martell, J.},
   title={$L^p$ bounds for Riesz transforms and square roots associated to second order elliptic operators},
   journal={Pub. Mat.},
   volume={47},
   date={2003},
   pages={497--515},
}

 \bib{K}{article}{
   author={Kenig, C.},
   title={Elliptic boundary value problems on Lipschitz domains},
   journal={In: Beijing Lectures in
Harmonic Analysis, Ann. of Math. Stud.},
   volume={112},
   pages={131--183}
   year={1986}
}
 
 \bib{KP}{article}{
   author={Kenig, C.},
   author={Pipher, J.},
   title={The Dirichlet problem for elliptic equations with drift terms},
   journal={Publ. Mat.},
   volume={45},
   date={2001},
   number={1},
   pages={199--217},
}

   \bib{KP01}{article}{
   author={Kenig, C.},
   author={Pipher, J.},
   title={The Dirichlet problem for elliptic equations with drift terms},
   journal={Publ. Math.},
   volume={45},
   date={2001},
   number={1},
   pages={199--217},
    }

   \bib{MMMM}{article}{
   author={Martell, J.},
   author={Mitrea, D.},
   author={Mitrea, I.},
   author={Mitrea, M.},
   title={The Dirichlet problem for elliptic systems with data in K\"othe function spaces},
   journal={Rev. Mat. Iberoam.},
   volume={32},
   year={2016},
   number={3},
   pages={913--970},
}

   \bib{MR}{article}{
author = {Mott, P.},
author={Roland, C.},
year = {2012},
month = {04},
title = {Limits to Poisson's ratio in isotropic materials - General result for arbitrary deformation},
volume = {87},
journal = {Physica Scripta},
}

   \bib{S1}{article}{
   author={Shen, Z.},
   title={Necessary and sufficient conditions for the solvability of the $L^p$ Dirichlet problem on Lipschitz domains},
   journal={Math. Ann.},
   year={2006},
   volume={336},
   pages={697--725},
}

   \bib{S2}{article}{
   author={Shen, Z.},
   title={The $L^p$ Dirichlet problem for elliptic systems on Lipschitz domains},
   journal={Math. Res. Lett.},
   volume={13},
   year={2006},
   noumber={1}, 
   pages={143--159},
}

  \bib{S3}{article}{
  author={Shen, Z.},
  title={Bounds of Riesz transforms on $L^p$ spaces for second order elliptic operators},
  journal={Ann. Inst. Fourier (Grenoble)},
  volume={55},
  year={2005},
  issue={1},
  pages={173--197},
}  

   \bib{S4}{article}{
   author={Shen, Z.},
   title={Extrapolation for the $L^p$ Dirichlet Problem in Lipschitz Domains},
   journal={Acta Math. Sin, Eng. Ser.},
   volume={35},
   year={2019},
   noumber={6}, 
   pages={1074--1084},
}

   \bib{UW}{article}{
   author={Uhlmann, G.},
   author={Wang, J.},
   title={Complex spherical waves for the elasticity system and probing of inclusions},
   journal={SIAM J. Math. Anal.},
   volume={38},
   year={2007}, 
   issue={6},
   pages={1967--1980},
}   

\bib{WZ}{article}{
	author={Wei, W.},
	author={Zhang, Z.},
	title={$L^p$ resolvent estimates for variable coefficient elliptic systems on Lipschitz domains},
	journal={Anal. Appl. (Singap.)},
	volume={13},
	year={2015},
	issue={6},
	pages={591--609},
}

   \end{biblist}
\end{bibdiv}

\end{document}